\documentclass[11pt, a4paper]{article}
%
%
%
\usepackage[latin1]{inputenc}
\usepackage[T1]{fontenc} 
\usepackage{amsmath}
\usepackage{amsfonts}
\usepackage{amssymb}
\usepackage{amsthm}
\usepackage[affil-it]{authblk}
\usepackage{enumitem}
\usepackage[normalem]{ulem}
\usepackage[mathscr]{euscript}
\usepackage{dsfont}
\usepackage{float}
\usepackage{times}
\usepackage{bbm}
\usepackage{xcolor}
\usepackage{xparse}
\usepackage{graphicx}
\usepackage[hang]{footmisc}

\usepackage[colorlinks,linkcolor=blue,citecolor=blue,urlcolor=black]{hyperref}

\usepackage[capitalise]{cleveref}
\usepackage[left=3cm, right=3cm, bottom=3cm, top=3cm]{geometry}


%
\newtheorem{theorem}{Theorem}[section]
\newtheorem{lemma}[theorem]{Lemma}
\newtheorem{proposition}[theorem]{Proposition}
\newtheorem{corollary}[theorem]{Corollary}

\theoremstyle{definition}
\newtheorem{definition}[theorem]{Definition}

\newtheorem{condition}[theorem]{Condition}

\theoremstyle{remark}
\newtheorem{remark}[theorem]{Remark}
\newtheorem{example}[theorem]{Example}

\numberwithin{equation}{section}

\crefname{example}{Example}{Examples}
\Crefname{example}{Example}{Examples}

\crefname{assumption}{Assumption}{Assumptions}
\Crefname{assumption}{Assumption}{Assumptions}

\crefname{condition}{Condition}{Conditions}
\Crefname{condition}{Condition}{Conditions}

\let\para\S

\makeatletter
\newcommand{\crefnames}[3]{%
  \@for\next:=#1\do{%
    \expandafter\crefname\expandafter{\next}{#2}{#3}%
  }%
}
\makeatother

\crefnames{part,chapter,section}{\para}{\para\para}

%

\setlist{topsep=1ex, itemsep=0.5ex, before={\setlist{topsep=-.5ex}}}

\DeclareMathOperator{\id}{id}
\DeclareMathOperator{\ran}{ran}

\DeclareMathOperator{\supp}{supp}

\renewcommand{\d}{\ensuremath{\mathfrak{d}}}
\newcommand{\f}{\ensuremath{\frac}}
\newcommand{\fG}{\ensuremath{\mathfrak{G}}}

\newcommand{\A}{\ensuremath{\mathcal{A}}}
\newcommand{\B}{\ensuremath{\mathcal{B}}}
\newcommand{\C}{\ensuremath{\mathcal{C}}}
\newcommand{\cE}{\ensuremath{\mathcal{E}}}

\newcommand{\bD}{\ensuremath{\mathbb{D}}}
\newcommand{\D}{\ensuremath{\mathcal{D}}}
\renewcommand{\S}{{\mathcal{S}}}

\newcommand{\F}{\ensuremath{\mathcal{F}}}

\renewcommand{\H}{\ensuremath{\mathscr{H}}}
\newcommand{\cH}{\ensuremath{\mathcal{H}}}
\newcommand{\cK}{\ensuremath{\mathcal{K}}}
\newcommand{\sC}{\ensuremath{\mathscr{C}}}
\newcommand{\sP}{\ensuremath{\mathscr{P}}}
\newcommand{\T}{\ensuremath{\mathbb{T}}}

\newcommand{\I}{\ensuremath{\mathcal{I}}}

\newcommand{\fS}{\ensuremath{\mathfrak{S}}}

\newcommand{\h}{\ensuremath{\mathfrak{h}}}
\newcommand{\K}{\ensuremath{\mathfrak{K}}}
\renewcommand{\L}{\ensuremath{\mathcal{L}}}
\newcommand{\M}{\ensuremath{\mathcal{M}}}
\newcommand{\N}{\ensuremath{\mathbb{N}}}
\renewcommand{\P}{\ensuremath{\mathcal{P}}}
\newcommand{\R}{\ensuremath{\mathbb{R}}}
\newcommand{\X}{\ensuremath{\mathcal{X}}}
\newcommand{\1}{\ensuremath{\mathds{1}}
}\newcommand{\Y}{\ensuremath{\mathcal{Y}}}

\def\CY{\mathcal Y}

\def\<{\langle}
\def\>{\rangle}

\def\I{\mathcal I}

\NewDocumentCommand{\Lin}{om}{\IfNoValueTF{#1}{L(\R^{#2},\R^{#2})}{L(\R^{#1},\R^{#2})}}
\NewDocumentCommand{\Cb}{om}{\IfNoValueTF{#1}{\C_b^{#2}}{\C_b^{#2,#1}}}

\def\le{\leq}
\NewDocumentCommand{\Lip}{om}{\IfNoValueTF{#1}{|#2|_{\mathrm{Lip}}}{|#2|_{\mathrm{Lip};\,#1}}}

\newcommand{\define}{\ensuremath\triangleq}
\newcommand{\expec}[1]{\mathbb{E}[#1]}
\def\E{\mathbb E}
\newcommand{\Expec}[1]{\mathbb{E}\left[#1\right]}

\newcommand{\braket}[1]{\ensuremath\langle#1\rangle}
\newcommand{\Braket}[1]{\ensuremath\left\langle#1\right\rangle}
\newcommand{\prob}{\ensuremath\mathbb{P}}

\newcommand{\TV}[1]{\ensuremath{\left\|#1\right\|_\mathrm{TV}}}

\renewcommand{\geq}{\geqslant}
\renewcommand{\leq}{\leqslant}

\def\${|\!|\!|}
\def\B{{\mathcal B}}
\def\F{{\mathcal F}}
\def\ge{\geq}
\def\1{\mathbf 1}
\newcommand{\vertiii}[1]{{\left\vert\kern-0.25ex\left\vert\kern-0.25ex\left\vert #1 \right\vert\kern-0.25ex\right\vert\kern-0.25ex\right\vert}}
\newcommand{\bvertiii}[1]{{\big\vert\kern-0.25ex\big\vert\kern-0.25ex\big\vert #1 \big\vert\kern-0.25ex\big\vert\kern-0.25ex\big\vert}}
\newcommand{\rom}[1]{(\textup{\uppercase\expandafter{\romannumeral#1}})}

\makeatletter
\newcommand{\substackal}[1]{%
  \vcenter{%
    \Let@ \restore@math@cr \default@tag
    \baselineskip\fontdimen10 \scriptfont\tw@
    \advance\baselineskip\fontdimen12 \scriptfont\tw@
    \lineskip\thr@@\fontdimen8 \scriptfont\thr@@
    \lineskiplimit\lineskip
    \ialign{\hfil$\m@th\scriptstyle##$&$\m@th\scriptstyle{}##$\hfil\crcr
      #1\crcr
    }%
  }%
}
\makeatother

\newcommand{\bigcdot}{\boldsymbol{\cdot}}

\definecolor{LB}{rgb}{0.29, 0.63, 0.73}

\begin{document}
\title{Mild Stochastic Sewing Lemma, SPDE in Random Environment, and Fractional Averaging}
\author{Xue-Mei Li\thanks{\href{mailto:xue-mei.li@imperial.ac.uk}{xue-mei.li@imperial.ac.uk}. Research  support by the EPSRC (grants nos:  EP/S023925/1 and EP/V026100/1).} }
\author{Julian Sieber\thanks{\href{mailto:j.sieber19@imperial.ac.uk}{j.sieber19@imperial.ac.uk}.}}
\affil{Department of Mathematics, Imperial College London, UK }
\date{\today}
\maketitle

\begin{abstract}
  \noindent
 Our first result is a stochastic sewing lemma with quantitative estimates for  mild incremental processes, with which we study  SPDEs driven by fractional Brownian motions in a random environment. We obtain uniform $L^p$-bounds. Our second result is a fractional averaging principle admitting non-stationary fast environments. As an application,  we prove a fractional averaging principle for SPDEs.
  \vspace{0.2cm}\\
  \noindent\textbf{MSC2010:} 60G22, 60H10, 37A25.\\
  \noindent\textbf{Keywords:} Fractional Brownian motion, fractional averaging, non-stationary fast process.
\end{abstract}

{\hypersetup{hidelinks}
\setcounter{tocdepth}{2} 
\tableofcontents
}

\newpage
\section{Introduction}

{\bf A.}  This article is  written on the occasion of Bj\"orn Schmalfu{\ss}' 65th birthday. There had been  significant novel developments both on stochastic analysis of and on stochastic dynamics driven by fractional Brownian motions---these topics are at the heart of Bj\"orn Schmalfu{\ss}'s research. Instead of  reporting on recent progresses, this article contains  original research 
 which deserves both a careful  treatment and which also illustrates recent directions in two-scale random dynamics driven non-Markovian processes.
We propose to  study stochastic partial differential equations (SPDEs) in a random environment. The driving noise $B$ is a fractional Brownian motion (fBm) with trace-class covariance operator $Q$, which takes values in a separable Hilbert space $\cK$ and is independent of the fast environment. 
The  Hurst parameter $H$ of $B$ is assumed to be greater than $\f 12$. Fractional noise with large Hurst parameter 
models noise increments with long range dependence (LRD). The LRD phenomenon is prevalent in data from social economical and natural sciences.

The solution of the SPDE takes values in a separable Hilbert space $\cH$ and the environment process takes its value in a complete separable metric space $\Y$. Let $f: \cH\times\Y\to\cH$, $g: \cH\times\Y\to\L(\cK,\cH)$, and $A:\cH\to\cH$ a densely defined closed linear operator with $\D(A)$ generating a bounded analytical semigroup $(S_t)_{t\geq 0}$. We consider
\begin{equation}\label{eq:intro_spde}
  dX_t= AX_t\,dt+f\big(X_t, Y_t\big)\,dt+g\big(X_t, Y_t\big) \,dB_t.
\end{equation}
The integral in the SPDE is interpreted as Riemann-Stieltjes integrals for each fixed sample path $B(\omega)$. We say that a path $X:[0,T]\to\cH$ is a \emph{mild solution} to \eqref{eq:intro_spde} if, for any $t\leq T$,
  \begin{equation}\label{def:mild_solution}
     X_t(\omega)=S_tX_0(\omega)+\int_0^tS_{t-s}f\big(X_s(\omega), Y_s(\omega)\big)\,ds+\int_0^t S_{t-s}g\big(X_s(\omega), Y_s(\omega)\big)\,dB_s(\omega)\qquad\prob\text{-a.s.}
  \end{equation} 
Preliminary results on the pathwise calculus are presented in \cref{pathwise}.

{\bf B.} Our first technical result  is  a `mild' version of the stochastic sewing lemma. This lemma, presented in \cref{prop:stochastic_mild_sewing},  generalizes the sewing lemma of Gubinelli, Feyel, and de La Pradelle \cite{Gubinelli-lemma,Feyel-delaPradelle}, the stochastic sewing lemma of L\^e \cite{Le2020}, and the mild sewing lemma of Gubinelli and Tindel  \cite[Theorem 3.5]{Gubinelli2010}, see also Gerasimovi\v{c}s and Hairer \cite{Gerasimovics2019}. For the mild stochastic sewing lemma the incremental process takes into account the interaction between the linear motion of $A$ and the nonlinear part.  The  mild stochastic sewing lemma is convenient for studying mild solutions of SPDEs, its first application  is a uniform $L^p$-bounds for the solution of the SPDE in a class of random environments. Stochastic sewing lemma in Banach spaces were recently obtained by L\^e \cite{Le-sewing-banach-21} and by
Athreya,  Butkovsky, L\^e, and Mytnik \cite{Athreya-Le-Butkovsky-Mytnik}, albeit both with the usual increment process. After submitting this to the arXiv, we learnt from L\^e a nice trick allowing to obtain the sewing part of the lemma from that in \cite{Le-sewing-banach-21}, while the error estimate  does not  follow directly from the corresponding result in their latest article  \cite{Athreya-Le-Butkovsky-Mytnik}.

We then turn to the main topic of this article:  stochastic evolution equations in a random environment evolving at a faster scale.
In a stochastic slow-fast model, the slow variables evolve in the random environment modelled by the fast random motion. The principle of fractional averaging leverages ergodicity of the fast variable to obtain an approximation of the slow variables $(X_t^\varepsilon)_{t\in[0,T]}$ as $\varepsilon\to 0$ whose microscopic structure resembles a fractional noise $B$. Despite of the statistical evidence, such slow-fast stochastic models have been overlooked because of the lack of tools for studying them. Stochastic averaging for noise with LRD has attracted a lot of attention recently, which has led to renovation of both techniques and results.  
Non-Markovian multi-scale dynamics only very recently gained traction, see e.g. \cite{Hairer2020,  Gehringer2021, Gehringer-Li-2020-1, Gehringer-Li-2020, Eichinger2020, Bourguin-Gailus-Spiliopoulos,brehier-numerics, Pei2020,Pei-Xu-Yin-Zhang, Sun2021}.  Pathwise well posedness of the equations and bounds on its solutions are given in \Cref{pathwise}; uniform $L^p$ bounds  of solutions in $\epsilon$  are obtained with the mild stochastic sewing lemma  in the subsequent sections.

{\bf C.} Our next technical result is an ergodic theorem allowing to obtain a fractional averaging principle for non-stationary and possibly non-Markovian environment.
Suppose that the random environment evolves on the \emph{macroscopic} time-scale $\varepsilon^{-1}$ and converges to a stationary distribution $\pi$ with algebraic rate $t^{-\delta}$ and $f,g$ satisfy suitable analytic conditions. One defines  $$\bar f(x)\define\int_\Y f(x,y)\pi(dy), \qquad \bar g(x)\define\int_{\Y} g(x,y) \pi(dy).$$
The key to the averaging theorem is to show that
 $|f(\cdot, Y_{\frac{\cdot}{\varepsilon}}) -\bar{f}|_{{-\kappa,\gamma}}\to 0$ (and similarly for $g$), in a H\"older norm with negative exponent $-\kappa<0$ in time.
 This was shown  in \cite{Hairer2020} for a mixing stationary environment and both $\cH,\cK$ finite-dimensional Euclidean spaces. In \cref{sec-ergodic}, we obtain an ergodic theorem for non-stationary fast processes allowing also correlation of the increments:
 \begin{align*}\label{ergodic-theorem}
       \Big\|\Big|g_\varepsilon(\bigcdot,x)-g_\varepsilon(\bigcdot,z)-\big(\bar g(x)-\bar g(z)\big)\Big|_{\C^{-\kappa}(L(\cK,\cH))}\Big\|_{L^p}&\lesssim \varepsilon^{\frac\delta p}|x-z|, \\
       \Big\|\big|g_\varepsilon(\bigcdot,x)-\bar g(x)\big|_{\C^{-\kappa}(L(\cK,\cH))}\Big\|_{L^p}&\lesssim \varepsilon^{\frac\delta p},
  \end{align*}
where $g_\epsilon(t, x)=g(x, Y_{\f t\epsilon})$. See \cref{lem:norm_convergence} and the subsequent examples. [These are the ergodic conditions needed for  fractional averaging, see \cref{def-ergodic} and \cref{thm:averaging_spde} for details.] The ergodic theorem nicely illustrates the fundamental differences between Markovian and non-Markovian dynamics. We show that the ergodic theorem actually follows from the total variation convergence of the conditioned dynamics, which can often be easily verified for Markovian fast processes. If $Y$ is also the solution to a fractional stochastic equation, its disintegration is usually governed by a different dynamics and no longer solves the original equation, see \cite{Li2022,Li2020} for a detailed discussion.

{\bf D.} As an application of the  ergodic theorem  and the mild stochastic sewing lemma, we obtain  a fractional averaging principle for a two-scale SPDE:  Suppose that $Y$  has the $\C^{\f 12-}$ regularity of a typical diffusion process and consider the SPDE
\begin{equation}\label{eq:slow_spde}
  dX_t^\varepsilon= AX_t^\varepsilon\,dt+f\big(X_t^\varepsilon, Y_{\frac{t}{\varepsilon}}\big)\,dt+g\big(X_t^\varepsilon, Y_{\frac{t}{\varepsilon}}\big) \,dB_t,\qquad X_0^\varepsilon=X_0.
\end{equation}
As $\varepsilon\to 0$ we expect $X^\varepsilon$ to be well approximated by an \emph{effective} autonomous dynamics $\bar{X}$ obtained through averaging of the coefficients. These kind of problem have a long history in the theory of dynamical systems and stochastic equations; in fact stochastic averaging for Markovian systems came up shortly after stochastic integration was introduced \cite{Bogolyubov1955,Hasminskii1968}. The typical questions are: What is the nature of the effective dynamics  and what is the mode of convergence?  

These questions are answered by our fractional averaging principle presented in \cref{thm:averaging_spde}. It is obtained under the ergodic assumptions on $f,g$ stated in paragraph C as well as additional regularity conditions ensuring the well-posedness of the equation and the uniform bounds on the solution, discussed in paragraph B. Examples of fast motions with the given ergodic assumptions include Markov processes with polynomial rate of convergence to their stationary measures, stationary processes with an algebraic mixing rate, and fractional dynamics satisfying a large scale contraction assumption. As $\varepsilon\to 0$, we prove that the pathwise mild solution $X^\varepsilon$ to \eqref{eq:slow_spde} converges in (mild) H\"older norm in probability to the solution of the na\"ively averaged, autonomous equation
\begin{equation}\label{eq:averaged_spde}
  d\bar{X}_t=A\bar{X}_t+\bar{f}(\bar{X}_t)\,dt+\bar{g}(\bar{X}_t)\,dB_t,\qquad \bar{X}_0=X_0.
\end{equation}

{\bf E.} To the best of our knowledge, this article proves the first genuine fractional averaging theorem for SPDEs.
The difference between the usual (Markovian) and the fractional effective dynamics lies in the definition of $\bar g$. For the former, $\bar g$ is a square root of $g(x,y)g^T(x,y)$, and---in case $\bar g$ does not vanish---the convergence is in a weak sense. If $g(x,y)$ does not depend on $y$, the fractional and the Markovian average of $g$ agree, whence classical methods apply.   For SPDEs driven by fractional Brownian motions  this was exploited in \cite{Pei2020,Pei-Xu-Yin-Zhang, Sun2021},  where again $g(x,y)=g(x)$ and classical averaging principle holds.  The convergence obtained in our averaging theorem is in probability,  which does not hold for SPDE/SDE driven by a Wiener processes, to the latter there is a  vast readily available literature which we do not attempt to enumerate.  

{\it Acknowledgement. The authors are grateful to Khoa L\^e for a very helpful conversation. We also thank two anonymous referees and Alexandra Neam\c{t}u for their comments.}
\section{Pathwise Solutions}\label{pathwise}

In this section we develop the mild solution theory  for the evolution equation
\begin{equation*}
  dx_t=\big(Ax_t+f(t,x_t)\big)\,dt+g(t,x_t)\,d\h_t,
\end{equation*}
where $\h\in\C^\gamma\big([0,T],\cK\big)$, $\gamma>\frac12$, is a deterministic H\"older continuous path, and  obtain pathwise bounds.

\subsection{Preliminaries}
The aim of the section is to fix notation and conventions, they are essentially standard. 
Throughout this article $\cH$ is a Hilbert space. Let $A$ be the generator of a strongly continuous semigroup $(S_t)_{t\geq 0}$. Although multiple norms are needed in the sequel, the symbol $\|\cdot\|$ will always denote the norm $\cH$ and $\|S_t\|$ the norm of $S_t$ regarded as a linear map $\cH\to\cH$.

Recall that a strongly continuous semigroup $(S_t)_{t\geq 0}$ on $\cH$ is an \emph{analytic semigroup} of angle $\theta \in (0, \f \pi 2]$ if
   $(S_t)_{t\geq 0}$ extends to 
$$ \Delta_\theta:=\{z\in\mathbb{C}\setminus \{0\}:\,|\arg z|<\theta\}$$
 such that
 (i) $S_{z_1+z_2}=S_{z_1}S_{z_2}$ for $ z_1,z_2\in\Delta_\theta$; 
(ii) the map $z\mapsto S_z$ is analytic on  $ \Delta_ \theta$, and (iii)
for any $\theta'<\theta$ and every $x\in \cH$, the strong continuity $\lim_{z\to 0} S_{z_1}x=x$ holds along any sequence in $\Delta_{\theta'}$. 
If in addition, $\sup_{z \in \Delta_{\theta'}} \|S_z\|<\infty$ for every $\theta<\theta$ we say $S(t)$ is a bounded analytic semigroup.
We assume this condition on $S_t$, equivalently $A$ is sectorial.

Without loss of generality,  by shifting $A$ if needed, we assume that  there are constant $C,\nu>0$ such that 
\begin{equation}\label{eq:semigroup_decay}
  \|S_t\|\leq C e^{-\nu t}\qquad\forall\,t\geq 0.
\end{equation}
By standard theory \cite{Pazy1974}, this allows us to define the bounded injective operator
\begin{equation*}
  (-A)^{-\kappa}\define\frac{1}{\Gamma(\kappa)}\int_0^\infty t^{\kappa-1}S_t\,dt,\qquad\kappa>0.
\end{equation*}
Here $\Gamma(\kappa)\define\int_0^\infty t^{\kappa-1}e^{-t}\,dt$ is Euler's gamma function.  Then $\cH$ is a pre-Hilbert space with respect to the new norm 
$\|(-A)^\kappa \bigcdot\|_\cH$ whose completion we denote by $\cH_{-\kappa}$. We also set $A^0=I$ and  $(-A)^{\kappa}=\big((-A)^{-\kappa}\big)^{-1}$. In the latter case, $(-A)^{\kappa}$ is a closed densely defined operator  with domain  $\ran \big((-A)^{-\kappa}\big)$. If $\kappa_2>\kappa_1>0$, then 
$\D\big((-A)^{-\kappa_2}\big)\subset \D\big((-A)^{-\kappa_1}\big)$. In summary, we have defined the following interpolation spaces 
\begin{equation}\label{eq:interpolation_spaces}
  \cH_\kappa\define\begin{cases}\Big(\D\big((-A)^\kappa\big),\|\bigcdot\|_{\cH_\kappa}=\|(-A)^\kappa \bigcdot\|_\cH\Big) & \kappa>0,\\
  \Big(\cH,\|\bigcdot\|_{\cH}\Big) & \kappa=0,\\
  \Big(\overline{\cH}^{\|\bigcdot\|_{\cH_\kappa}},\|\bigcdot\|_{\cH_\kappa}=\|(-A)^\kappa \bigcdot\|_\cH\Big) & \kappa<0,
  \end{cases}
\end{equation}
which are themselves Hilbert spaces. In case $A$ is self-adjoint and negative-definite,  then for any $\kappa\in\R$, $A^\kappa$ coincides with the operator obtained by functional calculus.

There are the following analytical bounds.
 If $\kappa_1\leq\kappa_2$, then $\cH_{\kappa_2}\hookrightarrow\cH_{\kappa_1}$ densely and
\begin{equation}\label{eq:holder_semigroup}
  \|(\id-S_t)x\|_{\cH_{\kappa_1}}\lesssim t^{\kappa_2-\kappa_1}\|x\|_{\cH_{\kappa_2}},
\end{equation}
uniformly in $t\geq 0$ for $x\in \cH_{\kappa_2}$, provided that $\kappa_2-\kappa_1\in(0,1)$. On the other hand, for any $\kappa_1\leq\kappa_2$, we also have the estimate
\begin{equation}\label{eq:semigroup_interpolation}
  \big\|S_tx\big\|_{\cH_{\kappa_2}}\lesssim \frac{\|x\|_{\cH_{\kappa_1}}}{t^{\kappa_2-\kappa_1}},
\end{equation}
uniformly in $t>0$. 

\subsection{Mild H\"older Spaces and Mild Sewing Lemma}

Given a function $f:[0,T]\to\cH$, we set $\delta f_{s,t}= f_t-f_s$. We also write
\begin{equation}
\hat\delta f_{s,t}= f_t-S_{t-s}f_s
\end{equation}
for the \emph{mild} increment of $f$.
Then for $s\le t$, it relates to the usual increment process $\delta f_{s,t}=f_t-f_s$ as folllows:
\begin{equation}\label{eq:increment_relation}
  \delta f_{s,t}=\hat \delta f_{s,t}+ (\id-S_{t-s})f_s
\end{equation}
For $\gamma\in(0,1]$  we define the \emph{mild H\"older space} 
\begin{equation}
  \hat\C^\gamma=\left\{f:[0,T]\to\cH:\,\|f\|_{\hat\C^\gamma}\define\sup_{s,t\in[0,T]}\frac{\|\hat\delta f_{s,t}\|}{|t-s|^\gamma}<\infty\right\}.
\end{equation}
The notation $ \hat\C^\gamma\big([0,T],\cH\big)$ is also used if we want to emphasize the underlying space or time interval.  

Note that $\hat\C^\gamma\big([0,T],\cH\big)\hookrightarrow\C\big([0,T],\cH\big)$ embeds continuously. In fact, the continuity of $f$ follows from the strong continuity of the semigroup in combination with \eqref{eq:increment_relation}.

Fix a deterministic function $\h\in\C^\gamma\big([0,T],\cK\big)$ where $\gamma>\f 12$. 
Let $f:[0,T]\times\cH\to\cH$ and $g:[0,T]\times\cH\to L(\cK,\cH)$. We want to show that under suitable regularity conditions on $f$ and $g$, the evolution equation
\begin{equation}\label{eq:young_spde}
  dx_t=\big(Ax_t+f(t,x_t)\big)\,dt+g(t,x_t)\,d\h_t,\qquad t\in[0,T],
\end{equation}
has a unique mild solution. Since any trace-class fBm belongs to $\C^{H-} $ by \cref{lem:uniform_convergence} below, the deterministic results in the sequel carry over to almost sure statements when $\h$ is replaced by $B$.

We understand the integral $\int_0^t S_{t-s}g(s,x_s)\,d\h_s$ appearing in the formulation of the mild solution to \eqref{eq:young_spde} as a Young integral. In particular, we shall make frequent use of the next result which was proven in the classical work of Young  \cite{Young1936}, see also Lyons \cite{Lyons1998,Lyons2002} for a contemporary proof. See also \cite{Zahle}.
\begin{proposition}[Young integral]\label{prop:young}
Let $f\in\C^\alpha\big([0,T],L(\cK,\cH)\big)$ and $\h\in\C^\gamma\big([0,T],\cK\big)$ with $\alpha+\gamma>1$. Then, for each $s,t\in [0,T]$ the integral
\begin{equation*}
  \int_s^t f(r)\,d\h_r\define\lim_{|P|\to 0}\sum_{[u,v]\in P}f(u)\big(\h_v-\h_u\big)
\end{equation*}
exists as a limit in $\cH$ along an arbitrary sequence of partitions of $[s,t]$ with mesh tending to zero. Moreover, we have the estimate
\begin{equation}\label{eq:young}
  \left\|\int_s^t f(r)\,d\h_r-f(s)\big(\h_t-\h_s\big)\right\|_\cH\lesssim\|f\|_{\C^\alpha}\|\h\|_{\C^\gamma}|t-s|^{\alpha+\gamma},
\end{equation}
uniform in $s,t\in[0,T]$.
\end{proposition}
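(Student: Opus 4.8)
The plan is to follow Young's original maximal-inequality argument, which simultaneously produces the convergence of the Riemann sums and the quantitative bound \eqref{eq:young}. Fix $s<t$ and, for a partition $P=\{s=t_0<t_1<\dots<t_N=t\}$, abbreviate $I_P\define\sum_{i=0}^{N-1}f(t_i)\big(\h_{t_{i+1}}-\h_{t_i}\big)\in\cH$. The heart of the matter is a single deletion step: if $N\ge 2$, then since $\sum_{i=1}^{N-1}(t_{i+1}-t_{i-1})\le 2(t-s)$, pigeonhole furnishes an interior index $1\le i\le N-1$ with $t_{i+1}-t_{i-1}\le\frac{2(t-s)}{N-1}$; removing $t_i$ from $P$ changes the sum by exactly the ``second difference''
\[
  I_P-I_{P\setminus\{t_i\}}=\big(f(t_i)-f(t_{i-1})\big)\big(\h_{t_{i+1}}-\h_{t_i}\big),
\]
whose $\cH$-norm is at most $\|f\|_{\C^\alpha}|t_i-t_{i-1}|^\alpha\,\|\h\|_{\C^\gamma}|t_{i+1}-t_i|^\gamma\le\|f\|_{\C^\alpha}\|\h\|_{\C^\gamma}\big(\tfrac{2(t-s)}{N-1}\big)^{\alpha+\gamma}$, using submultiplicativity of the operator norm on $L(\cK,\cH)$.

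Next I would iterate this deletion to collapse $P$ down to the trivial partition $\{s,t\}$, summing the errors over $N=2,3,\dots$. Since $\alpha+\gamma>1$, the series $\sum_{N\ge 2}(N-1)^{-(\alpha+\gamma)}=\zeta(\alpha+\gamma)$ converges, and one obtains, for \emph{every} partition $P$ of $[s,t]$,
\[
  \big\|I_P-f(s)(\h_t-\h_s)\big\|_\cH\le C_{\alpha,\gamma}\,\|f\|_{\C^\alpha}\|\h\|_{\C^\gamma}\,(t-s)^{\alpha+\gamma},\qquad C_{\alpha,\gamma}=2^{\alpha+\gamma}\zeta(\alpha+\gamma).
\]
To get existence of the limit, I compare two partitions $P,P'$ through their common refinement $P\cup P'$: refining a single interval $[u,v]$ of $P$ by the points of $P'$ inside it incurs, by the bound just proved applied on $[u,v]$, an error at most $C_{\alpha,\gamma}\|f\|_{\C^\alpha}\|\h\|_{\C^\gamma}|v-u|^{\alpha+\gamma}$, whence
\[
  \big\|I_{P\cup P'}-I_P\big\|_\cH\le C_{\alpha,\gamma}\|f\|_{\C^\alpha}\|\h\|_{\C^\gamma}\sum_{[u,v]\in P}|v-u|^{\alpha+\gamma}\le C_{\alpha,\gamma}\|f\|_{\C^\alpha}\|\h\|_{\C^\gamma}\,|P|^{\,\alpha+\gamma-1}(t-s),
\]
and symmetrically for $P'$. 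Thus along any sequence of partitions with mesh tending to zero the Riemann sums are Cauchy in the complete space $\cH$, and the common-refinement estimate shows any two such sequences share the same limit; this defines $\int_s^t f\,d\h$, and passing to the limit in the displayed bound for $\|I_P-f(s)(\h_t-\h_s)\|_\cH$ yields \eqref{eq:young}.

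I expect no substantial obstacle: the argument is entirely elementary, and the Hilbert-space setting costs nothing beyond completeness of $\cH$ and the submultiplicative estimate $\|f(t_i)-f(t_{i-1})\|_{L(\cK,\cH)}\|\h_{t_{i+1}}-\h_{t_i}\|_\cK$. The only points needing care are the bookkeeping in the telescoping deletion---namely that removing one node changes the sum by precisely one second-difference term---and the verification that the limit is independent of the chosen sequence of partitions, which is exactly what the common-refinement comparison delivers. (Alternatively, one could invoke the deterministic sewing lemma with germ $A_{u,v}=f(u)(\h_v-\h_u)$, whose discrete coboundary $\delta A_{u,r,v}=-(f(r)-f(u))(\h_v-\h_r)$ has exponent $\alpha+\gamma>1$; but the self-contained deletion argument above is shorter and yields the same constant.)
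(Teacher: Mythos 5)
The paper does not prove this proposition at all: it is stated as a classical result and attributed to Young, Lyons, and Z\"ahle, so there is no in-paper argument to compare against. Your proof is the standard Young maximal-inequality argument (pigeonhole a short interior node, delete it at cost one second difference, sum the geometric-type series using $\alpha+\gamma>1$, then pass to common refinements), and it is correct as written; the only adaptations needed for the $L(\cK,\cH)$-valued setting are the submultiplicative bound $\|f(t_i)-f(t_{i-1})\|_{L(\cK,\cH)}\,\|\h_{t_{i+1}}-\h_{t_i}\|_{\cK}$ and completeness of $\cH$, both of which you invoke explicitly. The constant $2^{\alpha+\gamma}\zeta(\alpha+\gamma)$ and the mesh exponent $|P|^{\alpha+\gamma-1}$ in the Cauchy estimate are both right, and the common-refinement comparison does settle independence of the limit from the chosen sequence of partitions.
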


While the estimate \eqref{eq:young} has been successfully used to prove pathwise existence results for (random) Young differential equations in finite dimensions, the following computation shows that it is ill-suited for our investigation of mild solutions: 
\begin{align*}
  \left\|\int_s^t S_{t-r}f(r)\,d\h_r\right\|&\lesssim \|\h\|_{\C^\gamma}\bigg(\sup_{u\in[s,t]}\|S_{t-u}f(u)\|+\sup_{u<v\in[s,t]}\frac{\|S_{t-v}f(v)-S_{t-u}f(u)\|}{|v-u|^\alpha}\bigg)|t-s|^\gamma\\
  &\lesssim \|\h\|_{\C^\gamma}\Big(\|f\|_{\infty}+\|\hat\delta f\|_{\C^\alpha}\Big)|t-s|^\gamma,
\end{align*}
where we used $\sup_{t\le T}  \|S_t\|<\infty$.  Now to show that $\|\hat\delta f\|_{\C^\alpha}<\infty$, we write
\begin{equation*}
  \|\hat\delta f_{s,t}\|_{\cH}\leq\|f(t)-f(s)\|_\cH+\big\|(\id-S_{t-s})f(s)\big\|_{\cH}.
\end{equation*}
While the first term can be handled by imposing appropriate assumptions on $f$, the second term in general only admits an $\mathcal{O}(|t-s|^\alpha)$-bound if we go up the ladder of interpolation spaces; \emph{viz.} $\big\|(\id-S_{t-s})f(s)\big\|_{\cH}\lesssim \|f\|_{\cH_{\alpha}}|t-s|^\alpha$, see \eqref{eq:holder_semigroup}. This is of course notoriously bad for any kind of fixed point argument aimed at showing well-posedness for \eqref{eq:young_spde}. Hence, we resort to a technique introduced by Gubinelli and Tindel in \cite{Gubinelli2010}. We give a presentation of their results which is however more in the style of the monograph of Friz and Hairer \cite{Friz2020}.

Let $\Delta^2=\big\{(s,t)\in[0,T]^2:\,s\leq t\big\}$ and $\Delta^3 =\big\{(r,s,t)\in[0,T]^3:\,r\leq s\leq t\big\}$ be  the two- (respectively  three-) dimensional simplices.
For $\Xi:\Delta^2\to\cH$ and $(r,s,t)\in\Delta^3$ let
\begin{equation}
  \hat\delta\Xi_{r,s,t}=\Xi_{r,t}-\Xi_{s,t}-S_{t-s}\Xi_{r,s}.
\end{equation}
For $\bar\gamma>0$ and $\cH$ a Hilbert space, we set
\begin{equation*}
  \hat{\sC}^{\bar\gamma}\big(\Delta^2,\cH\big)=\left\{\Xi:\Delta^2\to\cH:\,\|\Xi\|_{\hat{\sC}^{\bar\gamma}}=\sup_{0\leq r\leq s\leq t\leq T}\frac{\|\hat\delta\Xi_{r,s,t}\|}{|t-r|^{\bar\gamma}}<\infty\right\}.
\end{equation*}
In the following, we take the Hilbert space to be $\cH_\kappa$, an interpolation space defined by $\A$.  The following version of Gubinelli's sewing lemma \cite{Gubinelli-lemma,Feyel-delaPradelle} can be found in Gubinelli  \cite[Theorem 3.5]{Gubinelli2010}, see also  Gerasimovics and Hairer \cite{Gerasimovics2019}. 
\begin{proposition}[Mild Sewing lemma]\label{prop:mild_sewing}
  Let $\kappa\in\R$ and $0<\alpha\leq 1<\mu$. Let $\Xi:\Delta^2\to\cH_\kappa$ satisfy
  \begin{equation*}
    \sup_{s<t}\frac{\|\Xi_{s,t}\|_{\cH_\kappa}}{|t-s|^\alpha}<\infty
  \end{equation*}
  and suppose that there is a $\Lambda:\Delta^3\to\cH_\kappa$ such that $\hat\delta\Xi_{r,s,t}=S_{t-s}\Lambda_{r,s,t}$ for all $(r,s,t)\in\Delta^3$. Furthermore assume that there is a $C>0$ such that
  \begin{equation*}
    \|\Lambda_{r,s,t}\|_{\cH_\kappa}\leq C |t-r||t-s|^{\mu-1} \qquad\forall\,(r,s,t)\in\Delta^3.
  \end{equation*}
  Then the limit
  \begin{equation*}
    \I\Xi_{s,t}\define\lim_{|P|\to 0}\sum_{[u,v]\in P}S_{t-v}\Xi_{u,v}
  \end{equation*}
  exists along any sequence of partitions $P$ of $[s,t]$ with mesh tending to zero. Moreover, for each $\varpi\in[0,\mu)$, we have the bound
  \begin{equation*}
    \big\|\I\Xi_{s,t}-\Xi_{s,t}\big\|_{\cH_{\kappa+\varpi}}\lesssim C|t-s|^{\mu-\varpi}
  \end{equation*}
  for a hidden prefactor depending only on $\mu$ and $\varpi$.
\end{proposition}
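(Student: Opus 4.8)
The plan is to follow the classical sewing strategy adapted to the mild setting: show that the Riemann-type sums $R_{s,t}^P \define \sum_{[u,v]\in P} S_{t-v}\Xi_{u,v}$ form a Cauchy net as the mesh $|P|\to 0$, identify the limit $\I\Xi$, and then extract the quantitative bound on $\I\Xi_{s,t}-\Xi_{s,t}$ in the finer space $\cH_{\kappa+\varpi}$. The key object is the effect of refining a partition by removing one point. If $P'$ is obtained from $P$ by deleting an interior point $m\in(u,v)$ of a block $[u,v]\in P$, then using $\Xi_{u,v} = \Xi_{u,m} + S_{v-m}\Xi_{m,v} + \hat\delta\Xi_{u,m,v}$ together with the hypothesis $\hat\delta\Xi_{u,m,v} = S_{v-m}\Lambda_{u,m,v}$, one finds that $S_{t-v}\Xi_{u,v} - \big(S_{t-m}\Xi_{u,m} + S_{t-v}\Xi_{m,v}\big) = S_{t-v}S_{v-m}\Lambda_{u,m,v} = S_{t-m}\Lambda_{u,m,v}$ — here I use the semigroup property to absorb the telescoping semigroup factors. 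Hence $R^P_{s,t} - R^{P'}_{s,t} = S_{t-m}\Lambda_{u,m,v}$, and by the uniform decay $\sup_{t\le T}\|S_t\|<\infty$ and the hypothesis $\|\Lambda_{u,m,v}\|_{\cH_\kappa}\le C|v-u|\,|v-m|^{\mu-1}$, this single-point-removal error is $\lesssim C|v-u|\,|v-m|^{\mu-1}$ in $\cH_\kappa$.

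Next I would run the standard argument of successively removing points from a given partition $P$ of $[s,t]$: at each stage one removes a point $m$ for which the adjacent block length $|v-u|$ is at most $\tfrac{2}{\#P - 1}|t-s|$ (a pigeonhole/"coarsest gap" choice), so that the accumulated error telescopes against $\sum_{k\ge 1} k^{-(\mu-1)}$, which converges precisely because $\mu>1$. This yields $\|R^P_{s,t} - S_{t-t}\Xi_{s,t}\|_{\cH_\kappa} = \|R^P_{s,t} - \Xi_{s,t}\|_{\cH_\kappa}\lesssim C|t-s|^\mu$ with a constant depending only on $\mu$, uniformly in $P$. A comparison of two partitions via their common refinement then shows $(R^P_{s,t})$ is Cauchy as $|P|\to0$; denote the limit $\I\Xi_{s,t}$. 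Passing to the limit in the above bound gives $\|\I\Xi_{s,t}-\Xi_{s,t}\|_{\cH_\kappa}\lesssim C|t-s|^\mu$, which is the case $\varpi=0$.

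To upgrade to $\varpi\in(0,\mu)$, I would revisit the single-point-removal identity $R^P_{s,t}-R^{P'}_{s,t}=S_{t-m}\Lambda_{u,m,v}$ and measure it in the stronger norm $\cH_{\kappa+\varpi}$ using the smoothing estimate \eqref{eq:semigroup_interpolation}: $\|S_{t-m}\Lambda_{u,m,v}\|_{\cH_{\kappa+\varpi}}\lesssim (t-m)^{-\varpi}\|\Lambda_{u,m,v}\|_{\cH_\kappa}\lesssim C(t-m)^{-\varpi}|v-u|\,|v-m|^{\mu-1}$. Here the point $m$ removed lies in $[s,t]$, so $t-m$ can be as small as the block length; one has to order the removals carefully — remove points from the \emph{left} end first, so that at the moment a point near $t$ is deleted the relevant block is already coarse — or equivalently track $t-m\gtrsim |v-u|$ for the last block and bound the others with $t-m\ge |t-s|/2$ say. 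Summing $(t-m)^{-\varpi}|v-u|\,|v-m|^{\mu-1}$ over the successive removals then produces a convergent series (again using $\mu-1>0$, and now also $\varpi<\mu$ so the worst term $\sim |t-s|^{\mu-\varpi}$ dominates), giving $\|R^P_{s,t}-\Xi_{s,t}\|_{\cH_{\kappa+\varpi}}\lesssim C|t-s|^{\mu-\varpi}$; letting $|P|\to0$ completes the proof.

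The main obstacle is precisely this last bookkeeping: unlike the ordinary sewing lemma, the mild version has a time-dependent smoothing factor $S_{t-m}$ whose norm in $\cH_{\kappa+\varpi}$ blows up like $(t-m)^{-\varpi}$ as the removed point $m$ approaches the right endpoint $t$. One must therefore choose the order of point removal so that this singularity is never activated while the adjacent block is still fine — i.e. the $(t-m)^{-\varpi}$ and $|v-m|^{\mu-1}$ factors must be balanced against the block length — and verify that the resulting sum still converges. Everything else (the Cauchy property, identification of the limit, the $\varpi=0$ estimate) is a routine transcription of Gubinelli's argument with the semigroup property replacing the trivial telescoping.
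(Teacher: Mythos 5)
Your reduction identity $S_{t-v}\Xi_{u,v}-S_{t-m}\Xi_{u,m}-S_{t-v}\Xi_{m,v}=S_{t-m}\Lambda_{u,m,v}$ is correct, and with it the existence of the limit and the $\varpi=0$ bound go through as you describe (the coarsest-gap removal actually produces $\sum_k k^{-\mu}$, not $\sum_k k^{-(\mu-1)}$ as you wrote, but this is only a slip since $\mu>1$ is indeed what is needed). The genuine gap is in the case $\varpi>0$. The uniform-in-$P$ estimate $\|R^P_{s,t}-\Xi_{s,t}\|_{\cH_{\kappa+\varpi}}\lesssim C|t-s|^{\mu-\varpi}$ from which you want to ``let $|P|\to0$'' is not attainable by your bookkeeping, and nothing in the hypotheses makes it true: already for $P=\{s,t-\epsilon,t\}$ one has $R^P_{s,t}-\Xi_{s,t}=-S_\epsilon\Lambda_{s,t-\epsilon,t}$, and the only available bound is $\epsilon^{-\varpi}\cdot|t-s|\cdot\epsilon^{\mu-1}=|t-s|\,\epsilon^{\mu-1-\varpi}$, which diverges as $\epsilon\to0$ whenever $\varpi>\mu-1$. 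Neither of your proposed orderings escapes this: removing from the left first terminates in exactly that configuration (a single interior point at distance $\approx|P|$ from $t$ with a left block of length $\approx|t-s|$), while removing from the right first on a uniform mesh $h=(t-s)/(n+1)$ gives, at the $j$-th removal, an error $\approx(jh)^{-\varpi}\,(j+1)h\,(jh)^{\mu-1}\approx j^{\mu-\varpi}h^{\mu-\varpi}$, whose sum over $j\leq n$ is of order $n\,(t-s)^{\mu-\varpi}$ and blows up as $n\to\infty$. The inequality ``$t-m\gtrsim|v-u|$ for the last block'' that you invoke is the reverse of what holds there ($t-m\leq|v-u|$ when $v=t$).

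The repair is to decouple the two assertions: your step one already identifies $\I\Xi_{s,t}$ as a partition-independent limit in $\cH_\kappa$, and since $\cH_{\kappa+\varpi}\hookrightarrow\cH_\kappa$ it suffices to prove the quantitative bound along a single well-chosen sequence of partitions. The dyadic sequence works because each refinement removes all midpoints \emph{simultaneously}, so the singular weights are spread over the whole interval rather than concentrated near $t$: from $\Xi^{\bD_{n+1}}_{s,t}-\Xi^{\bD_n}_{s,t}=-\sum_{[u,w]\in\bD_n}S_{t-\frac{u+w}{2}}\Lambda_{u,\frac{u+w}{2},w}$ and \eqref{eq:semigroup_interpolation}, the level-$n$ contribution is bounded by $C\,2^{-(n+1)(\mu-1)}(t-s)^{\mu-1}\sum_{[u,w]\in\bD_n}\bigl(t-\tfrac{u+w}{2}\bigr)^{-\varpi}2^{-n}(t-s)$, and the last sum is a Riemann sum of $(t-r)^{-\varpi}$; for $\varpi\geq1$ one additionally borrows an exponent $\epsilon\in\bigl((\varpi-1)_+,\mu-1\bigr)$ from the geometric decay, yielding a level-$n$ bound of order $2^{-(n+1)(\mu-1-\epsilon)}(t-s)^{\mu-\varpi}$, which is summable. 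This is precisely the mechanism in the paper's proof of the stochastic analogue (Step 3 of \cref{prop:stochastic_mild_sewing}), and it is the missing piece of your argument.
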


An immediate consequence of \cref{prop:mild_sewing} is the following corollary which allows us to trade time-regularity for interpolation space-regularity:
\begin{corollary}[Mild Young bound]\label{cor:mild_young}
  Let $f\in\hat \C^\alpha\big([0,T],L(\cK,\cH_\kappa)\big)$ and $\h\in\C^\gamma\big([0,T],\cK\big)$ with $\alpha+\gamma>1$. Then, for each $\varpi\in[0,\alpha+\gamma)$, the Young integral of \cref{prop:young} satisfies the bound
  \begin{equation}\label{eq:mild_young}
    \left\|\int_s^t S_{t-r}f(r)\,d\h_r-S_{t-s}f(s)\h_{s,t}\right\|_{\cH_{\kappa+\varpi}}\lesssim \|\h\|_{\C^\gamma}\|f\|_{\hat\C^\alpha}|t-s|^{(\alpha+\gamma-\varpi)\wedge\gamma}.
  \end{equation}
\end{corollary}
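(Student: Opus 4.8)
The plan is to read \eqref{eq:mild_young} directly off the mild sewing lemma, \cref{prop:mild_sewing}, applied to the germ
\begin{equation*}
  \Xi_{s,t}\define S_{t-s}f(s)\,(\h_t-\h_s),\qquad (s,t)\in\Delta^2,
\end{equation*}
which takes values in $\cH_\kappa$ since $f(s)\in L(\cK,\cH_\kappa)$. The one computation that matters is: using $S_{t-r}=S_{t-s}S_{s-r}$ and $\h_t-\h_r=(\h_s-\h_r)+(\h_t-\h_s)$, one finds for $(r,s,t)\in\Delta^3$ that
\begin{equation*}
  \hat\delta\Xi_{r,s,t}=\big(S_{t-r}f(r)-S_{t-s}f(s)\big)(\h_t-\h_s)=-S_{t-s}\,\hat\delta f_{r,s}\,(\h_t-\h_s),
\end{equation*}
so that $\hat\delta\Xi_{r,s,t}=S_{t-s}\Lambda_{r,s,t}$ with $\Lambda_{r,s,t}\define-\hat\delta f_{r,s}\,(\h_t-\h_s)$. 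This is precisely the cancellation the mild formalism is built to produce: the obstructive term $(\id-S_{t-s})f(s)$ disappears, and what is left is governed by the \emph{mild} increment of $f$.

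With this in hand I would verify the hypotheses of \cref{prop:mild_sewing}. Since $f$, belonging to a mild H\"older space, is continuous hence bounded on $[0,T]$, and $S_t$ commutes with $(-A)^\kappa$ with $\sup_t\|S_t\|<\infty$, we get $\|\Xi_{s,t}\|_{\cH_\kappa}\lesssim\|f\|_\infty\|\h\|_{\C^\gamma}|t-s|^\gamma$, so $\Xi$ satisfies the first hypothesis (with exponent $\gamma\le 1$). For the second, the definitions of the norms give
\begin{equation*}
  \|\Lambda_{r,s,t}\|_{\cH_\kappa}\le\|\hat\delta f_{r,s}\|_{L(\cK,\cH_\kappa)}\,\|\h_t-\h_s\|_\cK\le\|f\|_{\hat\C^\alpha}\|\h\|_{\C^\gamma}\,|s-r|^\alpha|t-s|^\gamma,
\end{equation*}
and, since $r\le s\le t$ and $0<\alpha\le1$, the elementary inequality $|s-r|^\alpha|t-s|^{1-\alpha}\le|t-r|$ upgrades this to $\|\Lambda_{r,s,t}\|_{\cH_\kappa}\le C\,|t-r|\,|t-s|^{\mu-1}$ with $\mu\define\alpha+\gamma>1$ and $C\define\|f\|_{\hat\C^\alpha}\|\h\|_{\C^\gamma}$. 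Hence \cref{prop:mild_sewing} applies for every $\varpi\in[0,\mu)=[0,\alpha+\gamma)$ and produces the limit $\I\Xi_{s,t}=\lim_{|P|\to0}\sum_{[u,v]\in P}S_{t-v}\Xi_{u,v}$ together with $\|\I\Xi_{s,t}-\Xi_{s,t}\|_{\cH_{\kappa+\varpi}}\lesssim C|t-s|^{\mu-\varpi}$.

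To finish, I would identify the limit: because $S_{t-v}\Xi_{u,v}=S_{t-v}S_{v-u}f(u)(\h_v-\h_u)=S_{t-u}f(u)(\h_v-\h_u)$, the compensated sums are exactly the left-point Riemann--Stieltjes sums of $r\mapsto S_{t-r}f(r)$, whence $\I\Xi_{s,t}=\int_s^tS_{t-r}f(r)\,d\h_r$ (agreeing with the Young integral of \cref{prop:young} applied to $r\mapsto S_{t-r}f(r)$ when $f$ is $\cH$-valued, and otherwise serving as the definition of the mild integral). Since $\I\Xi_{s,t}-\Xi_{s,t}=\int_s^tS_{t-r}f(r)\,d\h_r-S_{t-s}f(s)\h_{s,t}$, the sewing bound reads $\lesssim\|f\|_{\hat\C^\alpha}\|\h\|_{\C^\gamma}|t-s|^{\alpha+\gamma-\varpi}$, and a fortiori, using $|t-s|\le T$ and $\alpha+\gamma-\varpi\ge(\alpha+\gamma-\varpi)\wedge\gamma$, we obtain \eqref{eq:mild_young}. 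The argument is routine once \cref{prop:mild_sewing} is available; the only substantive points are the algebraic identity for $\hat\delta\Xi_{r,s,t}$ above (where the mild formalism does its work) and, if one insists on treating the integral as a genuine Young integral when $\kappa<0$, the blow-up of $\|S_{t-r}\|_{L(\cH_\kappa,\cH_{\kappa+\varpi})}$ as $r\uparrow t$ — which is sidestepped by defining the integral through the sewing limit, insensitive to this.
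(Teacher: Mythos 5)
Your proposal is correct and follows essentially the same route as the paper: the germ $\Xi_{u,v}=S_{v-u}f(u)\h_{u,v}$, the identity $\hat\delta\Xi_{r,u,v}=S_{v-u}\Lambda_{r,u,v}$ with $\Lambda_{r,u,v}=-\hat\delta f_{r,u}\,\h_{u,v}$, the bound $\|\Lambda_{r,u,v}\|_{\cH_\kappa}\le\|f\|_{\hat\C^\alpha}\|\h\|_{\C^\gamma}|v-r||v-u|^{\alpha+\gamma-1}$, and an appeal to \cref{prop:mild_sewing}. The only difference is that you spell out the verification of the first hypothesis and the identification of the sewing limit with the Young integral, which the paper leaves implicit.
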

\begin{proof}
  The Young integral is given by the limit of the Riemann sum over $\Xi_{u,v}\define S_{v-u}f(u)\h_{u,v}$. Then it is readily checked that $\hat\delta\Xi_{r,u,v}=S_{v-u}\Lambda_{r,u,v}$ for
  \begin{equation*}
    \Lambda_{r,u,v}=\big(S_{u-r}f(r)-f(u)\big)\h_{u,v}= -\hat\delta f_{r,u}\h_{u,v}.
  \end{equation*}
  Moreover, we see that
  \begin{equation*}
    \|\Lambda_{r,u,v}\|_{\cH_{\kappa}}\leq\|f\|_{\hat\C^\alpha} \; \|\h\|_{\C^\gamma} \; |v-r| \; |v-u|^{\alpha+\gamma-1}
  \end{equation*}
  for all $(r,u,v)\in\Delta^3$. Therefore, the required bound follows from \cref{prop:mild_sewing}.
\end{proof}

\subsection{Pathwise Well-Posedness and a Deterministic Stability Lemma}\label{sec:well_posed}

We now turn to a well-posedness result for \eqref{eq:young_spde}. As a preparation we recall the following. Here and in the sequel, we shall denote by $D_xf$ the spatial Fr\'echet derivative of a function $f: [0,T]\times \cH\to \cH$, where $\cH$ is a Hilbert space.
\begin{lemma}\label{lem:holder_composition}
  Let $f:[0,T]\times\cH\to\cH$ be such that  the functions $\{f(t,\cdot): t\in [0,T]\}$ and their first order spatial derivatives uniformly Lipschitz continuous. In addition, suppose that 
  $t\mapsto D_xf(t, x)$ is uniformly $\alpha$-H\"older continuous. Then we have
  \begin{align*}
    &\big\|f(t,x_1)-f(t,x_2)-f(s,x_3)+f(s,x_4)\big\|\\
    \lesssim& \|x_1-x_2-x_3+x_4\|+\|x_1-x_2\|\Big(|t-s|^\alpha+\|x_1-x_3\|+\|x_2-x_4\|\Big).
  \end{align*}
for all $x_1,\dots,x_4\in\cH$ and $s,t\in[0,T]$. 
\end{lemma}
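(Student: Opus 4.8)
The plan is to collapse the fourth-order expression into a purely spatial second difference at the frozen time $t$ plus a temporal increment, then expand every remaining first-order spatial difference by the fundamental theorem of calculus, and---this is the one point requiring care---to route the ``defect direction'' through the \emph{uniform} bound on $D_xf$ rather than through its Lipschitz seminorm. Write $g_t\define f(t,\cdot)$, $h\define x_1-x_2$, $k\define x_3-x_4$, so that $h-k=x_1-x_2-x_3+x_4$, and set $M\define\sup_{t,x}\|D_xf(t,x)\|_{\L(\cH,\cH)}$, $L\define\sup_{t}|D_xf(t,\cdot)|_{\mathrm{Lip}}$, and let $C_\alpha$ be the ($x$-uniform) $\alpha$-H\"older constant of $t\mapsto D_xf(t,x)$; all three are finite by hypothesis, and each $f(t,\cdot)$ is continuously Fr\'echet differentiable (its derivative being Lipschitz, hence continuous). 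The starting point is the algebraic identity
\[
  f(t,x_1)-f(t,x_2)-f(s,x_3)+f(s,x_4)=\Big(g_t(x_1)-g_t(x_2)-g_t(x_3)+g_t(x_4)\Big)+\Big((g_t-g_s)(x_3)-(g_t-g_s)(x_4)\Big)=:\Delta'+\Delta'' .
\]

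For $\Delta''$, the fundamental theorem of calculus gives $\Delta''=\int_0^1\big(D_xf(t,x_4+\tau k)-D_xf(s,x_4+\tau k)\big)k\,d\tau$, and I would split $k=h+(k-h)$: the $h$-part is bounded by $C_\alpha|t-s|^\alpha\|x_1-x_2\|$, while on the $(k-h)$-part I use $\|D_xf(t,x_4+\tau k)-D_xf(s,x_4+\tau k)\|\le 2M$ to get $2M\|x_1-x_2-x_3+x_4\|$. For $\Delta'$, applying the fundamental theorem of calculus to $g_t(x_1)-g_t(x_2)$ and to $g_t(x_3)-g_t(x_4)$ and subtracting yields
\[
  \Delta'=\int_0^1\Big(D_xf(t,x_2+\tau h)(h-k)+\big(D_xf(t,x_2+\tau h)-D_xf(t,x_4+\tau k)\big)k\Big)\,d\tau .
\]
The first summand is at most $M\|h-k\|=M\|x_1-x_2-x_3+x_4\|$. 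In the second summand I again write $k=h+(k-h)$: on the $h$-piece I invoke the Lipschitz bound together with the identity $(x_2+\tau h)-(x_4+\tau k)=(1-\tau)(x_2-x_4)+\tau(x_1-x_3)$, producing $L\,\|x_1-x_2\|\,\big(\|x_1-x_3\|+\|x_2-x_4\|\big)$, whereas on the $(k-h)$-piece I use only $\|D_xf(t,x_2+\tau h)-D_xf(t,x_4+\tau k)\|\le 2M$, producing $2M\|x_1-x_2-x_3+x_4\|$. Summing the five contributions gives exactly the claimed inequality, with hidden constant depending only on $M$, $L$, $C_\alpha$.

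The main difficulty, and the reason the right-hand side has precisely those terms, is that $\|x_3-x_4\|=\|k\|$ is not small and is comparable to $\|x_1-x_2\|$ only up to $\|x_1-x_2-x_3+x_4\|$, so one may not pull $k$ out in norm and apply the Lipschitz bound on its full length. Splitting $k=h+(k-h)$ every time it appears---keeping the Lipschitz estimates on the ``slow'' direction $h=x_1-x_2$ and sending the ``defect'' $k-h=-(x_1-x_2-x_3+x_4)$ through the $L^\infty$-bound on $D_xf$---is what makes the estimate hold for \emph{all} $x_1,\dots,x_4\in\cH$ with no smallness assumption. Everything else is routine: Bochner integration of the $\L(\cH,\cH)$-valued integrands (continuous in $\tau$) and the elementary bound $\|(1-\tau)u+\tau v\|\le\|u\|+\|v\|$.
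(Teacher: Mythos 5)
Your proof is correct. It rests on the same two ingredients as the paper's argument --- the fundamental theorem of calculus applied to $f(t,\cdot)$ along segments, and the observation that the ``defect'' direction $x_1-x_2-x_3+x_4$ must be routed through the \emph{uniform} bound $\sup_{t,x}\|D_xf(t,x)\|$ (finite because each $f(t,\cdot)$ is uniformly Lipschitz), while the Lipschitz and H\"older increments of $D_xf$ are paired with $x_1-x_2$ --- but it organizes them differently. The paper writes the entire quantity as a difference of two segment integrals and performs a single rearrangement of the integrand,
\[
D_xf(t,u_\theta)(x_1-x_2)-D_xf(s,v_\theta)(x_3-x_4)=D_xf(s,v_\theta)(x_1-x_2-x_3+x_4)+\bigl[D_xf(t,u_\theta)-D_xf(s,v_\theta)\bigr](x_1-x_2),
\]
with $u_\theta=\theta x_1+(1-\theta)x_2$ and $v_\theta=\theta x_3+(1-\theta)x_4$; the two resulting terms are then estimated exactly as you estimate yours (the derivative difference in the second term is split by the triangle inequality into a temporal $|t-s|^\alpha$ part and a spatial part controlled by $\|u_\theta-v_\theta\|\leq\|x_1-x_3\|+\|x_2-x_4\|$). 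You instead peel off the temporal increment $\Delta''$ first, which forces you to perform the substitution $k=h+(k-h)$ three times in order to avoid the forbidden factor $\|x_3-x_4\|$; this costs a few extra lines and a slightly larger constant in front of the defect term ($5M$ rather than $M$), but nothing of substance. Both arguments are complete; the paper's single rearrangement is simply the more economical bookkeeping, since it never lets the increment $x_3-x_4$ appear paired with anything other than a frozen, uniformly bounded derivative.
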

\begin{proof}
  This standard estimate was proved in \cite{Nualart-Rascanu} and in the infinite dimensional setting used in \cite{Maslowski-Nualart}. For the reader's convenience we include the short proof below:

 By assumption there is a constant $L>0$ such that, for all $x,y\in\cH$ and all $t\in[0,T]$,
 \begin{gather*}
    \|f(t,x)-f(t,y)\|\leq L\|x-y\|, \qquad  \|D_x f(t,x)-D_xf(t,y)\|_{L(\cH)}\leq L\|x-y\|, \\
    \|D_x f(s,x)-D_xf(t,x)\|_{L(\cH)}\leq L|t-s|^\alpha.
 \end{gather*}
From the identity 
  \begin{align*}
    &f(t,x_1)-f(t,x_2)-f(s,x_3)+f(s,x_4)\\
    =&\int_0^1 D_x f\big(t,\theta x_1+(1-\theta)x_2\big)(x_1-x_2)\,d\theta-\int_0^1 D_x f\big(s,\theta x_3+(1-\theta)x_4\big)(x_3-x_4)\,d\theta\\
    =&\int_0^1 D_x f\big(s,\theta x_3+(1-\theta)x_4\big)(x_1-x_2-x_3+x_4)\,d\theta \\
    &+ \int_0^1 \Big[D_x f\big(t,\theta x_1+(1-\theta)x_2\big)-D_x f\big(s,\theta x_3+(1-\theta)x_4\big)\Big](x_1-x_2)\,d\theta,
  \end{align*}
   the claimed estimate follows at once.
\end{proof}
The announced existence and uniqueness result reads as follows:
\begin{proposition}\label{prop:well_posedness}
     Let $\alpha, \gamma$ be  two numbers with $\gamma \in \big(\f 12,1\big]$ and  $\alpha+\gamma >1$.
       Let $\h: [0,T]\to \cK$, $f:[0,T]\times\cH\to\cH$, and  $g:[0,T]\times\cH\to L(\cK,\cH)$ satisfy the following conditions:
  \begin{enumerate} 
  \item\label{it:h} $\h\in\C^\gamma\big([0,T],\cK\big)$.
    \item  For any $s,t\in [0,T] $ and $ x,y\in \cH$,    \begin{equation}\label{eq:assumption_f}
      \big\|f(t,x)-f(s,y)\big\|\leq C\big(|t-s|^\alpha+\|x-y\|\big).
    \end{equation}
    \item  The functions $g(t,\bigcdot):\cH\to L(\cK,\cH)$ are Lipschitz continuous, uniformly in $t\in [0,T]$. Moreover,
    \begin{align*}  
    \sup_{s\in[0,T]}  \big\|g(s,x)-g(s,y)\big\|_{L(\cK,\cH_{-\alpha})} &\lesssim \|x-y\|_{\cH_{-\alpha}}, \\
    \sup_{x\in \cH}   \big\|g(t,x)-g(s,x)\big\|_{L(\cK,\cH_{-\alpha})}&\lesssim |t-s|^\alpha, \\
     \sup_{t\in [0,T]}  \|D_x g(t,x)-D_xg(t,y)\|_{L(\cK,L(\cH_{-\alpha}))}&\lesssim \|x-y\|_{\cH_{-\alpha}}, \\
    \sup_{x\in \cH}  \|D_x g(s,x)-D_xg(t,x)\|_{L(\cK,L(\cH_{-\alpha}))}&\lesssim |t-s|^\alpha,
    \end{align*}
    where the Fr\'echet derivative is taken with respect to the norm of $\cH_{-\alpha}$.
    \end{enumerate}
    Then, for any $x_0\in\cH$, the Young equation \eqref{eq:young_spde} has a unique global mild solution in $\hat\C^{\gamma-}\big([0,T],\cH\big)$. Furthermore, for each $\bar\gamma<\gamma$, there are constants $C>0$ and $N\in\N$ independent of $x_0$ and $\h$ such that
     \begin{equation}\label{eq:deterministic_apriori_bound}
    \|x\|_{\hat\C^{\bar{\gamma}}([0,T],\cH)}\leq C\big(1+\|x_0\|\big)\Big(1+\|\h\|_{\C^\gamma([0,T],\cH)}\Big)^N.
  \end{equation}
  \end{proposition}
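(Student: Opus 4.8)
\emph{Overall strategy.} I would treat \eqref{eq:young_spde} as a Banach fixed-point problem in the mild Hölder spaces, solved first on a short interval and then concatenated, with the quantitative bound \eqref{eq:deterministic_apriori_bound} read off from the way the length of the contraction interval depends on $\|\h\|_{\C^\gamma}$. Fix $\bar\gamma<\gamma$ (in the parameter regime of interest $\alpha<\gamma$, and I work with $\bar\gamma\ge\alpha$). For a subinterval $[a,b]\subset[0,T]$ and a datum $\xi\in\cH$, define on $\hat\C^{\bar\gamma}\big([a,b],\cH\big)$ the map
\[
  (\M x)_t=S_{t-a}\xi+\int_a^t S_{t-r}f(r,x_r)\,dr+\int_a^t S_{t-r}g(r,x_r)\,d\h_r,
\]
the last integral being the Young integral of \cref{prop:young}. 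A mild solution on $[a,b]$ is precisely a fixed point of $\M$.

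\emph{Local contraction.} Three observations turn $\M$ into a contraction for $b-a$ small. First, $\hat\delta(S_{\bigcdot-a}\xi)_{s,t}=S_{t-a}\xi-S_{t-s}S_{s-a}\xi=0$ by the semigroup law, so the free term is invisible to the mild Hölder seminorm. Second, writing $W_t=\int_a^t S_{t-r}f(r,x_r)\,dr$ we have $\hat\delta W_{s,t}=\int_s^t S_{t-r}f(r,x_r)\,dr$, so, by $\sup_{r\le T}\|S_r\|<\infty$ and the affine growth of $f$ from \eqref{eq:assumption_f}, $\|W\|_{\hat\C^{\bar\gamma}([a,b],\cH)}\lesssim(b-a)^{1-\bar\gamma}\big(1+\|x\|_{\C([a,b],\cH)}\big)$. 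Third, writing $Z_t=\int_a^t S_{t-r}g(r,x_r)\,d\h_r$ we have $\hat\delta Z_{s,t}=\int_s^t S_{t-r}g(r,x_r)\,d\h_r$, and \cref{cor:mild_young}, applied to the integrand $r\mapsto g(r,x_r)$ regarded as an element of $\hat\C^\alpha([a,b],L(\cK,\cH_{-\alpha}))$ with $\varpi=\alpha$ there, bounds $\|Z\|_{\hat\C^{\bar\gamma}([a,b],\cH)}$ by $(b-a)^{\gamma-\bar\gamma}\|\h\|_{\C^\gamma}\big(\|g(\bigcdot,x_\bigcdot)\|_{\hat\C^\alpha(L(\cK,\cH_{-\alpha}))}+\sup_r\|g(r,x_r)\|_{L(\cK,\cH)}\big)$. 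Collecting these, and running the same computation on $\M x-\M\tilde x$ for two iterates with a common datum, one gets, with $\vartheta\define\gamma-\bar\gamma>0$ and continuous $C_1,C_2$,
\[
  \|\M x\|_{\hat\C^{\bar\gamma}}\le(b-a)^{\vartheta}C_1\big(\|\h\|_{\C^\gamma}\big)\big(1+\|\xi\|+\|x\|_{\hat\C^{\bar\gamma}}\big),\qquad
  \|\M x-\M\tilde x\|_{\hat\C^{\bar\gamma}}\le(b-a)^{\vartheta}C_2\big(\|\h\|_{\C^\gamma},R\big)\|x-\tilde x\|_{\hat\C^{\bar\gamma}},
\]
where $R\sim1+\|\xi\|$ is the radius of the ball on which the iteration is run. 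Choosing $b-a$ so small that $(b-a)^{\vartheta}C_2\le\tfrac12$ and that $\M$ stabilises this ball, Banach's theorem yields a unique mild solution on $[a,b]$.

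\emph{The main obstacle: the superposition estimate.} The real content of the third point above (and of the difference estimate) is that $x\mapsto g(\bigcdot,x_\bigcdot)$ is Lipschitz from $\hat\C^{\bar\gamma}([a,b],\cH)$ into $\hat\C^\alpha([a,b],L(\cK,\cH_{-\alpha}))$, with constants controlled by $\|x\|_{\hat\C^{\bar\gamma}}$ and with every bound carrying a strictly positive power of $b-a$. Since mild increments do not behave additively under composition, I would split $\hat\delta[g(\bigcdot,x_\bigcdot)]_{s,t}=g(t,x_t)-S_{t-s}g(s,x_s)$ as $\big(g(t,x_t)-g(s,S_{t-s}x_s)\big)+\big(g(s,S_{t-s}x_s)-S_{t-s}g(s,x_s)\big)$: the first bracket is handled by the $x$- and $t$-regularity of $g$ in the $\cH_{-\alpha}$-norm from hypothesis (iii) together with $\|x_t-S_{t-s}x_s\|_{\cH_{-\alpha}}=\|\hat\delta x_{s,t}\|_{\cH_{-\alpha}}\lesssim|t-s|^{\bar\gamma}\|x\|_{\hat\C^{\bar\gamma}}$; the second bracket is a commutator between $g$ and the semigroup which would be uncontrollable directly in $\cH$, but rewriting it as $\big(g(s,S_{t-s}x_s)-g(s,x_s)\big)+(\id-S_{t-s})g(s,x_s)$ and using \eqref{eq:holder_semigroup} together with the Lipschitz continuity and boundedness of $g$ as an $L(\cK,\cH)$-valued map, one sees it is $\mathcal O\big(|t-s|^\alpha(1+\|x\|_{\C([a,b],\cH)})\big)$. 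The difference $g(\bigcdot,x_\bigcdot)-g(\bigcdot,\tilde x_\bigcdot)$ is treated in the same way, now invoking \cref{lem:holder_composition} — which is tailored precisely to this, with the Fréchet derivative in $\cH_{-\alpha}$ — with $x_1=x_t$, $x_2=\tilde x_t$, $x_3=S_{t-s}x_s$, $x_4=S_{t-s}\tilde x_s$, so that $x_1-x_2-x_3+x_4=\hat\delta(x-\tilde x)_{s,t}$ and the cross-terms $\|x_1-x_3\|_{\cH_{-\alpha}}$, $\|x_2-x_4\|_{\cH_{-\alpha}}$ are again mild increments of $x$ and $\tilde x$. The heaviest part of the proof is exactly this bookkeeping: keeping every power of $b-a$ positive through the splittings while the difference estimate stays linear in $\|x-\tilde x\|_{\hat\C^{\bar\gamma}}$.

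\emph{Globalisation, uniqueness, and the uniform bound.} Because $f$ and $g$ are globally Lipschitz (and $g$ bounded), the local construction applies verbatim on any subinterval with any datum; iterating it produces a global mild solution in $\hat\C^{\bar\gamma}([0,T],\cH)$, and uniqueness follows since two mild solutions agree on the first subinterval by the contraction property and hence, by induction, on $[0,T]$. For \eqref{eq:deterministic_apriori_bound} I would propagate the constants: the contraction length can be taken $\gtrsim(1+\|\h\|_{\C^\gamma})^{-p}$ for some $p=p(\alpha,\gamma)$, so $[0,T]$ is covered in $\lesssim(1+\|\h\|_{\C^\gamma})^{p}$ steps; iterating the per-step bounds — for the mild Hölder seminorm directly from the local estimates, for the sup-norm from a Gronwall argument exploiting the decay $\|S_t\|\le Ce^{-\nu t}$ — and using the elementary fact (from \eqref{eq:increment_relation} and the same decay) that $\|x\|_{\hat\C^{\bar\gamma}([0,T],\cH)}$ is dominated by the sum of the subinterval seminorms plus $\|x\|_{\C([0,T],\cH)}$, one obtains the claimed bound with $N$ depending only on $\bar\gamma,\alpha,\gamma$ and the structural constants. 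It is the boundedness of $g$ that keeps the contribution of the $\h$-integral affine in $\sup_r\|x_r\|$ with a controllable coefficient, hence $N$ finite.
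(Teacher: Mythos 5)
Your proposal is correct and follows essentially the same route as the paper: a Banach fixed-point argument for the mild formulation on a short interval, with the key superposition estimate obtained by the identical splitting $\hat\delta[g(\bigcdot,x_\bigcdot)]_{s,t}=\big(g(t,x_t)-g(s,S_{t-s}x_s)\big)+\big(g(s,S_{t-s}x_s)-g(s,x_s)\big)+(\id-S_{t-s})g(s,x_s)$, the difference estimate via \cref{lem:holder_composition} applied to $x_1=x_t$, $x_2=\tilde x_t$, $x_3=S_{t-s}x_s$, $x_4=S_{t-s}\tilde x_s$, and the a priori bound \eqref{eq:deterministic_apriori_bound} from counting the $\mathcal{O}\big((1+\|\h\|_{\C^\gamma})^{p}\big)$ contraction steps needed to cover $[0,T]$. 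The only cosmetic differences are the order of the two steps (the paper establishes the a priori bound before the contraction) and your choice of a ball of radius $R\sim 1+\|\xi\|$ where the paper normalises to the unit ball.
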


  With this preliminary pathwise result, we obtain in \cref{non-uniform-estimate,uniform-Lp-bound} the necessary estimates on the mild solutions of the SPDE (\ref{eq:intro_spde}). Before giving the proof, we shall compare \cref{prop:well_posedness} to previous well-posedness results and give an example satisfying its conditions. In \cite{Maslowski-Nualart} an existence and uniqueness theorem under certain `mild' Lipschitz conditions on both the diffusion coefficient and its derivative was shown for a time-independent  SPDE, in \cite{Hesse-Neamtu}  well-posedness was shown for the rough case with the help of a sewing lemma technique whose condition on $g$ seems to be complementary to ours. Both results are for the time-independent case, here we allow time dependence of H\"older regularity to accommodate that the coefficients in our SDE depend on another stochastic process. The result of Maslowski and Nualart was later improved by Garrido-Atienza, Lu, and Schmalfu{\ss} \cite{Garrido2010}. There the usual Lipschitz conditions on $g$ and its derivative in the norm of $\cH$ are assumed. However, we would like to emphasize that the works \cite{Maslowski-Nualart,Garrido2010} require the square root of the covariance operator of the fBm to be trace-class and only show that the solution exists in H\"older spaces of order strictly less than $\frac12$. Our technique based on \cref{cor:mild_young} allows us to (a) establish the same regularity as in the finite-dimensional case and (b) to relax the unnatural trace condition, c.f. \cref{trace} for further discussion.

  Examples of functions $f$ and $g$ satisfying the conditions of the \cref{prop:well_posedness} can be constructed with of the help of the Nemytskii operator.   Given any  function $G: U\subset \R^n\to \R^n$,
   the Nemytskii operator $T_G :\cH\to\cH$, where $\cH$ is a space of functions on $U$, is
$$(T_G)(x)(\eta)=G\big(x(\eta)\big), \qquad  \eta\in \R^n.$$    
We shall assume $G: \R^n\to \R^n$ is of class $\C_b^\infty$, the space of bounded smooth functions with bounded derivatives of all orders.  
It is sufficient to construct a time-independent example, for time dependency can be easily introduced by, for example, multiplying $f,g$ by $t^\alpha$. In the examples we work with the Lapliacian for concreteness; other elliptic second order differential operator would work \emph{mutatis mutandis}.

\begin{example}\label{ex:r_n}
 Let  $A=\Delta -1$ where $\Delta$ is  the Laplacian on $\R^n$.    Let $H^s$ denote the completion of $C_c^\infty(\R^n)$ under the metric induced by the inner product
 \begin{equation}\label{eq:sobolev_inner_product}
   \<x,y\>_{H^s}=  \sum_{0\le i\le s}\int_{\R^n} \Braket{D^i x(\eta), D^i y(\eta)}\,d\eta
 \end{equation}
 where $s\in\N_0$. In the sequel let $n=1$ for concrete computations. We take $\cH=H^2$ and $\alpha=\frac12$, so $\cH_{-\alpha}=H^1$.
 In general, we can take $G \in \C_b^\infty(\R)$ with $G(0)=0$; for example let $G(t)=\sin(t)$. Then $T_G \in L^2$, 
 $\|T_G(x)-T_G(y)\|_{H^1}\lesssim |G^\prime|_\infty\|x-y\|_{H^1}+|G^{\prime\prime}|_\infty\|y\|_{H^1} \sup_t  |x(t)-y(t)| $.  Since $  |x-y|_\infty\lesssim \|x-y\|_{H^1}$, $T_G$ is a locally Lipschitz continuous function in $H^1$. Similarly $T_G$ is locally Lipschitz in $H^2$  which follows from 
 \begin{align}
  \big\|T_G(x)-T_G(y)\big\|_{H^2}&\lesssim \big\|T_G(x)-T_G(y)\big\|_{H^1}
  +|G''|_\infty\|x-y\|_{H^2} \big(|\dot y|_\infty+|\dot x|_\infty\big)\nonumber\\
  &\phantom{\lesssim}+\big|G^{(3)}\big|_\infty|x-y|_{\infty}\|y\|_{H^1}^2+|G'|_\infty\|x-y\|_{H^2}
  +|G''|_\infty|x-y|_\infty\|y\|_{H^2}\nonumber\\
  &\lesssim |G|_{\C_b^3} \big(1+\|x\|_{H^2}^2+\|y\|_{H^2}^2\big) \;  \|x-y\|_{H^2}.\label{eq:tg_estimate}
 \end{align}
 Since the Fr\'echet derivative of $T_G: H^1\to H^1$ is given by pointwise differentiation, that is, $\big((DT_G)_x(v)\big)(t)=( DG)_{x(t)}\big(v(t)\big)$ where $x,v\in H^1$, the
local Lipschitz continuity of $D T_G$  follows similarly. We then take a smooth function  $\phi: \R\to \R_+$ with compact support  and  a driving process $\h$ from an arbitrary separable Hilbert space \cK.
  Let  $(h_i)_{i\in\N}$ denote an orthonormal basis of $\cK$. For $h\in \cK$ set 
   \begin{equation}\label{example-construction}
     f(x)=\phi\big(\|x\|^2_\cH\big) T_{\psi_0}(x), \qquad    g(x)(h)=\sum_{i\in \N}a_i  \<h, h_i\>\phi\big(\|x\|^2_{\cH_{-\alpha}}\big) T_{\psi_i}(x),
   \end{equation} 
   where $\{\psi_i, i\in\N_0\}$ is a sequence from $ \C_b^\infty(\R)$ vanishing at $0$ and with all bounds uniform in $i$ and $\sum_{i\in\N} |a_i|^2<\infty$. By the estimate \eqref{eq:tg_estimate}, $f$ is globally Lipschitz on $\cH=H^2$ and
   \begin{align*}\|g(x)-g(y)\|_{L(\cK,\cH_{-\alpha})}&=\sup_{\substack{h\in\cK\\ \|h\|_\cK=1}}\left\|\sum_{i\in \N}a_i  \<h, h_i\> 
   \Bigl ( \phi\big(\|x\|^2_{\cH_{-\alpha}}\big) T_{\psi_i} (x)-\phi\big(\|y\|^2_{\cH_{-\alpha}}\big) T_{\psi_i }(y)\Bigr)\right\|_{\cH_{-\alpha}}\\
   &\lesssim\sup_{i\in\N}|\psi_i|_{\C_b^3}\|x-y\|_{\cH_{-\alpha}}\sum_{i\in\N} \big|a_i\<h, h_i\>\big| \lesssim\|x-y\|_{\cH_{-\alpha}},
   \end{align*}
   where the last step follows since the series is finite by the Cauchy-Schwarz inequality.   Similarly, one checks that $D g$ is also globally Lipschitz continuous on $L\big(\cK,L(\cH_{-\alpha})\big)$. The requirements of \cref{prop:well_posedness} are therefore satisfied.
\end{example}

For $n>1$ we can argue similar by employing higher order Sobolev spaces and abstract embedding theorems. A good source for this is  \cite{Runst1996}:
\begin{example} Let $\T^n$ be the $n$-dimensional torus and $A=\triangle$ be the Laplacian on the torus. Let $H^s(\T^n)$, $s\in\N$, be the Sobolev spaces on the torus, that is, the completion of $\C_c^\infty(\T^n)$ under the scalar product \eqref{eq:sobolev_inner_product} with integration over $\T^n$. Fix $k>\frac{n}{2}+2$. We take $\cH\define H^{k}(\T^n)$. The interpolation spaces are given by $\cH_\kappa=H^{k+2\kappa}(\T^n)$, where for non-integer order $\kappa$
it is defined by the norm $(\sum_{m\in \mathbb{Z}^n} |m|^{2s}|\hat x(m)|^2)^{\f 12}$ where $\hat x(m)=\int x(\eta)e^{-2\pi im\cdot \eta}\,d\eta$ is the Fourier coefficient. We work with $\alpha=\frac12$, note  $\cH_{-\f 12}=H^{k-1}(\T^n)$. 
  Let $G\in\C^\infty(\T^n)$.  Then for any natural number $\ell$ and any $v=(v_1,\dots, v_\ell)\in (\R^n)^\ell$,  the weak derivatives are given by:
  $$D^{\ell}\big(T_G(x)\big)_{\eta}(v)=(DG)_{x(\eta)}\big((D^\ell x)_\eta (v)\big)+\sum_{j=2}^{\ell-1} \Big(D^{\ell-j+1}G\Big)_{x(\eta)} \mathfrak{P}_j(v),
  $$
  where $\mathfrak{P}_j(v) $ involves products of derivatives of $x$ up to total order $j$. Note that $DT_G(x)$ is trivially in $L^2$. For higher order derivatives, we note that any $x\in H^s$ with $s>\f n 2+1$ is in $W^{1,q}$ for any $q\ge 2$ 
This follows from the Sobolev inequalities on the torus:   $\|x\|_{L^q}\lesssim \|x\|_{H^{\f n2 -\f n q}}$  and its application to $Dx$ yields  $\|Dx\|_{L^q} \lesssim \|x\|_{H^{\f n 2 -\f n q+1}}$. A nice proof for the Sobolev inequality can be found in \cite{Benyi-Oh}.
By \cite[Theorem in \para 5.2.5]{Runst1996}, see also \cite[Proposition 3.1]{Hofmanova2013},
$$\|T_G(x)\|_{H^\ell} \lesssim  (1+\|x\|_{W^{1, 2\ell}}^\ell+\|x\|_{H^\ell}).$$
  It is now evident that $T_g(H_k)\subset H_k$ for any $k>\f n 2+1$ and in particular $T_G(\cH)\subset \cH$ and $T_G(\cH_{-\alpha})\subset\cH_{-\alpha}$.
Finally using the inequality 
$|x|_\infty \lesssim \|x\|_{H^k}$ for $k>\f n 2$ and a similar argument to that in \cref{ex:r_n},  we see that $x\mapsto T_G(x)$ is locally Lipschitz continuous on such $H^k$ and therefore on both $\cH$ and $\cH_{-\alpha}$. So is $x\mapsto DT_G(x)$.
  Similar to \cref{ex:r_n}, $T_G$ is infinitely often Fr\'echet differentiable, both as a mapping on $\cH$ and $\cH_{-\alpha}$, and $D^\ell T_G(x)= G^{(\ell)}\circ x$ for each $\ell\geq 1$.

 Finally we construct a globally Lipschitz mapping $f$ and transfer this construction to the diffusion vector field $g$. Take again a smooth function $\phi: \R\to \R_+$ with compact support.  Fix a separable Hilbert space $\cK$ supporting the driver $\h$ of the equation. Let $(h_i)_{i\in\N}$ be an orthonormal basis of $\cK$ and set 
 for $x\in\cH$ and $h\in \cK$:
    \begin{equation}\label{eq:def_example_g}
   f(x)\define\phi\big(\|x\|_{\cH}^2\big)T_F(x),\qquad
  g(x)(h)\define \sum_{i\in \N} a_i \<h,h_i\>\phi\big(\|x\|_{\cH_{-\alpha}}^2\big)T_{G_i}(x),
  \end{equation}
  where $(a_i)_{i\in\N}$ is a square summable sequence and $G_i:\T^n\to\R$ are smooth. Since $\cH\hookrightarrow\cH_{-\alpha}$, $f$ is $\C_b^\infty$ in the Fr\'echet sense both on $\cH$ and $\cH_{-\alpha}$. We also see that $g(x):\cK\to\cH$ defines a bounded linear operator. Moreover, the map $g$ is smooth in the Fr\'echet sense, both on $\cH$ and $\cH_{-\alpha}$.   If $\sup_{i\in\N}\sup_{|y|\leq R}\big|G_i^{(\ell)}(y)\big|<\infty$ for each $\ell\geq 0$, where we defined $R\define\sup\left\{|x|_\infty:\, x\in\cH_{-\alpha}\,\&\,\|x\|_{\cH_{-\alpha}}^2\in\supp(\phi)\right\}$, the global Lipschitz continuity in the respective norms required by \cref{prop:well_posedness} follows.
\end{example}

\begin{proof}[Proof of \cref{prop:well_posedness}]
  Fix $\bar\gamma<\gamma$. Without loss of generality, we may assume that $\alpha<\gamma$. 
  \begin{enumerate}
    \item\label{it:a_priori} We first establish the \emph{a priori} bound \eqref{eq:deterministic_apriori_bound} on $\|x\|_{\hat\C^{\bar\gamma}}$, showing that the solution (if it exists) cannot blow up in finite time.
    \item\label{it:fixed_point} We show that the application
    \begin{equation*}
     ( \A y)_t\define S_t y_0+\int_0^t S_{t-s}f(s,y_s)\,ds+\int_0^t S_{t-s} g(s,y_s)\,d\h_s,\qquad t\in[0,T],
    \end{equation*}
    is contracting on the complete metric space
   \begin{equation*}
      V\define\left\{y:[0,\rho]\to\cH:\,y_0=x_0,\,\|y\|_{\hat\C^{\bar\gamma}([0,\rho],\cH)}\leq 1\right\},
    \end{equation*}
        provided we choose $\rho\in(0,T]$ small enough. Hence, the Banach fixed point theorem implies that the equation \eqref{eq:young_spde} has a unique mild solution on the interval $[0,\rho]$. Owing to \ref{it:a_priori} we can iterate this procedure to construct the unique mild solution on all of $[0,T]$.
  \end{enumerate}

  \dashuline{\textbf{Step \ref{it:a_priori}:}} Let $\rho\in(0,T]$ and $x_0\in\cH$. From 
  $$x_t=S_tx_0+\int_0^tS_{t-s}f(s,x_s)\,ds+\int_0^t S_{t-s}g(s,x_s)\,d\h_s=S_tx_0+\hat{\delta}x_{0,t}$$
  and by the uniform boundedness of the semigroup,  we have
  \begin{equation}\label{eq:first_step_a_priori}
    \|x_t\|\leq \bigl\|S_tx_0 \bigr\| +\bigl\|\hat\delta x_{0,t}\bigr\|
    \lesssim\|x_0\|+ t^\gamma \|x\|_{\hat\C^{\bar{\gamma}}}\lesssim \|x_0\|+\|x\|_{\hat\C^{\bar{\gamma}}},\qquad \forall\,t\in[0,\rho].
  \end{equation}
     To obtain the required a priori bound we begin with $x=\A x$ and observe that
  \begin{equation*}
     \hat\delta(\mathcal{A}x)_{s,t}=\int_s^t S_{t-s}f(r,x_r)\,dr+\int_s^t S_{t-r}g(r,x_r)\,d\h_r,
  \end{equation*}
  Since $\|f(r,x)\|_\cH\lesssim 1+\|x\|_\cH$ by the Lipschitz continuity of $f$, we have the trivial bound:
  \begin{equation}\label{eq:a_priori_integral}
    \left\|\int_s^t S_{t-r}f(r,x_r)\,dr\right\|_{\cH}\lesssim\big(1+\|x_\cdot \|_{\infty}\big) |t-s|.
  \end{equation}
  Next, we use the Young bound from \cref{cor:mild_young} with $\varpi=\alpha$ to find
  \begin{equation}\label{eq:a_priori_young}
    \left\|\int_s^t S_{t-r}g(r,x_r)\,d\h_r\right\|_{\cH}\lesssim\|\h\|_{\C^\gamma}\left(\big\|g(\bigcdot,x_{\bigcdot})\big\|_{\infty}+\big\|g(\bigcdot,x_{\bigcdot})\big\|_{\hat\C^\alpha(L(\cK,\cH_{-\alpha}))}\right)|t-s|^\gamma.
  \end{equation}
  Note that $\big\|g(\bigcdot,x_{\bigcdot})\big\|_{\infty}\lesssim 1+\|x\|_\infty$ by uniform Lipschitz continuity. We split the second term in the bracket on the right hand side of \eqref{eq:a_priori_young}:
  \begin{equation*}
    \big\|\hat\delta g(\bigcdot,x_{\bigcdot})_{s,t}\big\|_{L(\cK,\cH_{-\alpha})}\leq\big\|g(t,x_t)-g(s,S_{t-s}x_s)\big\|_{L(\cK,\cH_{-\alpha})}+\big\|g(s,S_{t-s}x_s)-S_{t-s}g(s,x_s)\big\|_{L(\cK,\cH_{-\alpha})}.
  \end{equation*}
  Firstly, by the uniform H\"older continuity of $g$ in time and its Lipschitz continuity  on $\cH_{-\alpha}$,
  \begin{equation*}
    \big\|g(t,x_t)-g(s,S_{t-s}x_s)\big\|_{L(\cK,\cH_{-\alpha})}\lesssim\Big(|t-s|^\alpha+\big\|\hat\delta x_{s,t}\big\|_{\cH_{-\alpha}}\Big)\lesssim\Big(1+\|x\big\|_{\hat\C^\gamma([0,\rho],\cH)}\Big)|t-s|^\alpha,
  \end{equation*}
  where in the last step we used that $\cH_{-\alpha}\hookrightarrow\cH$ and that $\alpha<\gamma$.  Moreover by \eqref{eq:holder_semigroup} and Lipschitz continuity of $g(s,\bigcdot):\cH_{-\alpha}\to L(\cK,\cH_{-\alpha})$, we have
  \begin{align*}
    &\big\|g(s,S_{t-s}x_s)-S_{t-s}g(s,x_s)\big\|_{L(\cK,\cH_{-\alpha})}\\
    &\leq \big\|g(s,S_{t-s}x_s)-g(s,x_s)\big\|_{L(\cK,\cH_{-\alpha})}+\big\|g(s,x_s)-S_{t-s}g(s,x_s)\big\|_{L(\cK,\cH_{-\alpha})}\\
    &\lesssim\big\|(\id-S_{t-s})x_s\big\|_{\cH_{-\alpha}}+\big\|(\id-S_{t-s})g(s,x_s)\big\|_{L(\cK,\cH_{-\alpha})}\\
    &\lesssim\Big(\|x \|_{\infty}+\big\|g(\bigcdot,x_{\bigcdot})\big\|_{\infty}\Big)|t-s|^\alpha\lesssim\big(1+\|x\|_\infty\big)|t-s|^\alpha.
  \end{align*}
  In summary, since  $\|x\|_\infty
    \lesssim\|x_0\|+  \|x\|_{\hat\C^{\bar{\gamma}}}$ by \eqref{eq:first_step_a_priori},
  \begin{equation*}
  \big\|g(\bigcdot,x_{\bigcdot})\big\|_{\hat\C^\alpha(L(\cK,\cH_{-\alpha}))}
  \lesssim \big(1+\|x_0\|+\|x\|_{\hat\C^{\bar{\gamma}}}\big).
  \end{equation*}
  Inserting this back into \eqref{eq:a_priori_integral}--\eqref{eq:a_priori_young}, we have shown that
    \begin{equation*}
    \left\|\int_s^t S_{t-r}g(r,x_r)\,d\h_r\right\|_{\cH}\lesssim\|\h\|_{\C^\gamma}\left(1+\|x_0\|+\|x\|_{\hat\C^{\bar{\gamma}}}\right)|t-s|^\gamma.
  \end{equation*}
and
  \begin{equation}\label{eq:invariance}
    \|x\|_{\hat\C^{\bar{\gamma}}([0, \rho], H)}= \big\|\A x \big\|_{\hat\C^{\bar{\gamma}}([0,\rho],\cH)} \lesssim\Big(1+\|\h\|_{\C^\gamma}\Big)\Big(1+\|x_0\|+\| x\|_{\hat\C^{\bar\gamma}([0, \rho], H)}\Big)\rho^{\gamma-\bar\gamma}, 
  \end{equation}
  whence, upon choosing $\rho>0$ small enough,
  \begin{equation*}
    \|x\|_{\hat\C^{\bar{\gamma}}([0,\rho],\cH)}\leq C\big(1+\|x_0\|\big)\Big(1+\|\h\|_{\C^\gamma([0,T],\cH)}\Big).
  \end{equation*}
   This bound can be iterated to obtain
  \begin{equation*}
    \|x\|_{\hat\C^{\bar{\gamma}}([t,t+\rho],\cH)}\leq C^{\left[\frac{t}{\rho}\right]+1}\big(1+\|x_0\|\big)\Big(1+\|\h\|_{\C^\gamma([0,T],\cH)}\Big)^{\left[\frac{t}{\rho}\right]+1},
  \end{equation*}
   where $[a]$ denotes the integer part of a number $a$.  
  
  Now  let $\gamma\in(0,1)$ and $\rho\in(0,1]$, we show that
  \begin{equation}
    \|x\|_{\hat\C^\gamma([0,T],\cH)}\lesssim \frac{1}{\rho^{1-\gamma}}\sup_{t\in[0,T-\rho]}\|x\|_{\hat\C^\gamma([\rho,t+\rho],\cH)}.
  \end{equation}
  Fix $0\leq s<t\leq T$. Without loss of generality we can assume that $|t-s|>\rho$. Let $P$ be a partition of $[s,t]$ with $|t-s|\rho^{-1}+1$ elements and mesh $|P|\leq\rho$. For concreteness choose for instance $\{(s+n\rho)\wedge t:n=0,\dots,|t-s|\rho^{-1}\}$. Then
  \begin{align*}
    \|\hat\delta x_{s,t}\|_{\cH}&\leq\sum_{[u,v]\in P}\big\|S_{t-v}\hat\delta x_{u,v}\big\|_{\cH}\lesssim\sum_{[u,v]\in P}\big\|\hat\delta x_{u,v}\big\|_{\cH}\\
    &\leq\rho^{\gamma}\left(\frac{|t-s|}{\rho}+1\right)\sup_{t\in[0,T-\rho]}\|x\|_{\hat\C^\gamma([\rho,t+\rho],\cH)}\\
    &\leq 2\rho^{\gamma}|t-s|\sup_{t\in[0,T-\rho]}\|x\|_{\hat\C^\gamma([\rho,t+\rho],\cH)}.
  \end{align*}
 This finally implies the apriori estimate 
 \begin{equation*}
    \|x\|_{\hat\C^{\bar\gamma}([0,T],\cH)}\lesssim \frac{C^{\left[\frac{T}{\rho}\right]+1}}{\rho^{1-\bar\gamma}}\big(1+\|x_0\|_\cH\big)\Big(1+\|\h\|_{\C^\gamma([0,T],\cH)}\Big)^{\left[\frac{T}{\rho}\right]+1}.
  \end{equation*}

  \vspace{1em}
  \dashuline{\textbf{Step \ref{it:fixed_point}:}} The invariance of the set $V$ under the application $\mathcal{A}$, $\mathcal{A}(V)\subset V$, has already been shown in step \ref{it:a_priori}. In fact, it follows immediately from \eqref{eq:invariance} that $\|\mathcal{A}y\|_{\hat\C^{\bar\gamma}}\leq 1$ for $\rho>0$ sufficiently small. We are thus left to show that there is a $C<1$ and $\rho>0$ such that
  \begin{equation}\label{eq:contraction}
    \big\|\mathcal{A} y-\mathcal{A}\bar y\big\|_{\hat\C^{\bar\gamma}([0,\rho],\cH)}\leq C\|y-\bar y\|_{\hat\C^{\bar\gamma}([0,\rho],\cH)}, \qquad \qquad \forall y,\bar y\in V.
  \end{equation}
  By the Lipschitz continuity of $f$,
\begin{equation}
\left\|\int_s^t S_{t-r}\big(f(r,y_r)-f(r,\bar y_r)\big)\,dr\right\|\lesssim \|y-\bar y\|_{\infty}|t-s|. \label{eq:contraction_1}
\end{equation}    
Similarly to \eqref{eq:a_priori_young}, we also find  
      \begin{align}
    &\left\|\int_s^t S_{t-r}\big(g(r,y_r)-g(r,\bar y_r)\big)\,d\h_r\right\|\nonumber\\
    \lesssim& \|\h\|_{\C^\gamma}\left(\big\|g(\bigcdot,y_{\bigcdot})-g(\bigcdot,\bar y_{\bigcdot})\big\|_{\infty}+\big\|g(\bigcdot,y_{\bigcdot})-g(\bigcdot,\bar y_{\bigcdot})\big\|_{\hat\C^\alpha(L(\cK,\cH_{-\alpha}))}\right)|t-s|^\gamma.\label{eq:contraction_2}
  \end{align}
  By assumption, $\big\|g(\bigcdot,y_{\bigcdot})-g(\bigcdot,\bar y_{\bigcdot})\big\|_{\infty}\lesssim\|y -\bar y\|_{\infty}$. On the other hand, we can write
  \begin{align*}
    \hat\delta\big(g(\bigcdot,y_{\bigcdot})-g(\bigcdot,\bar y_{\bigcdot})\big)_{s,t}&=\Big[g(t,y_t)-g(t,\bar y_t)-g(s,S_{t-s}y_s)+g(s,S_{t-s}\bar{y}_s)\Big] \\
    &\phantom{=}+ \Big[g(s,S_{t-s}y_s)-g(s,S_{t-s}\bar y_s)-g(s,y_s)+g(s,\bar y_s)\Big]\\
    &\phantom{=}+ \Big[g(s,y_s)-g(s,\bar y_s)-S_{t-s}g(s,y_s)+S_{t-s}g(s,\bar y_s)\Big]\\
    &=\rom{1}_{s,t}+\rom{2}_{s,t}+\rom{3}_{s,t}.
  \end{align*}
  Next, we apply \cref{lem:holder_composition} to the first two terms. 
  Since $y, \bar y\in V$, this gives
  \begin{align*}
    \|\rom{1}_{s,t}\|_{L(\cK,\cH_{-\alpha})}\lesssim \big\|\hat\delta (y-\bar y)_{s,t}\big\|_{\cH_{-\alpha}}+\|y_t-\bar{y}_t\|_{\cH_{-\alpha}}\left(|t-s|^\alpha+\big\|\hat\delta y_{s,t}\big\|_{\cH_{-\alpha}}+\big\|\hat\delta \bar y_{s,t}\big\|_{\cH_{-\alpha}}\right)\\
    \lesssim \Big(\|y-\bar y\|_{\hat\C^{\bar\gamma}}+\|y_t-\bar{y}_t\|_{\cH_{-\alpha}}\Big)|t-s|^\alpha,
  \end{align*}
   so that $\|\rom{1}_{s,t}\|_{\C^\alpha(L(\cK,\cH_{-\alpha}))}\lesssim \big(1+\|x_0\|\big)\|y-\bar y\|_{\hat\C^{\bar\gamma}(\cH)}|t-s|^\alpha$.  Similarly, we find
  \begin{align*}
    \|\rom{2}_{s,t}\|_{L(\cK,\cH_{-\alpha})}&\lesssim \big\|(\id-S_{t-s})(y_s-\bar y_s)\big\|_{\cH_{-\alpha}}\\
    &\phantom{\lesssim}+\big\|S_{t-s}(y_s-\bar y_s)\big\|_{\cH_{-\alpha}}\left(\big\|(\id-S_{t-s})y_s\big\|_{\cH_{-\alpha}}+\big\|(\id-S_{t-s})\bar y_s\big\|_{\cH_{-\alpha}}\right)\\
    &\lesssim  \|y-\bar y\|_{\infty} \Big(\|y\|_{\infty}+\|\bar y\|_{\infty}\Big)|t-s|^\alpha\\
    &\lesssim\big(1+\|x_0\|\big)\|y-\bar y\|_{\hat\C^{\bar{\gamma}}(\cH)}|t-s|^\alpha,
  \end{align*}
  where we made use of \eqref{eq:holder_semigroup}. Finally, we observe that
  \begin{align*}
    \|\rom{3}_{s,t}\|_{L(\cK,\cH_{-\alpha})} &=  \big\|(\id-S_{t-s}) \big(g(s,y_s)-g(s,\bar{y}_s)\big)\big\|_{L(\cK,\cH_{-\alpha})} \\
   & \lesssim \big\|g(s,y_s)-g(s,\bar{y}_s)\big\|_{L(\cK, \cH)}|t-s|^\alpha
    \lesssim \big(1+\|x_0\|\big)\|y-\bar y\|_{\hat\C^{\bar\gamma}}|t-s|^\alpha.
  \end{align*}
  Combining these bounds, we see that
  \begin{equation*}
    \big\|g(\bigcdot,y_{\bigcdot})-g(\bigcdot,\bar y_{\bigcdot})\big\|_{\hat\C^\alpha([0,\rho], L(\cK,\cH_{-\alpha}))}\lesssim \big(1+\|x_0\|\big)\|y-\bar y\|_{\hat\C^{\bar\gamma}},
  \end{equation*}
  whence we conclude
  \begin{equation}\label{eq:stability_difference}
    \big\|\mathcal{A} y-\mathcal{A}\bar y\big\|_{\hat\C^{\bar\gamma}}\lesssim \rho^{\gamma-\bar\gamma}\|y-\bar y\|_{\hat\C^{\bar\gamma}}
  \end{equation}
  from \eqref{eq:contraction_1}--\eqref{eq:contraction_2}. This establishes \eqref{eq:contraction} upon potentially further decreasing $\rho>0$ and the proof is complete.
\end{proof}

\begin{remark}
  From the proof of  \cref{prop:well_posedness} we note if  the global Lipschitz continuity of $f,g$ is replaced by the local Lipschitz continuity,  the equation has a local solution. This is in fact sufficient for our purpose below, leading to a local instead of a global averaging theorem.
\end{remark}

Finally, the following Gr\"onwall-type stability result will be used in the proof of the averaging principle in \cref{sec:slow_fast}:
\begin{lemma}[Mild residue lemma]\label{lem:residue}
  Let $\gamma\in\big(\frac12,1\big]$, $\alpha\in(1-\gamma,\gamma)$. Suppose that $f$, $g$, and $\h$ satisfy the conditions of \cref{prop:well_posedness}. Then, for any $y,\bar y\in\hat\C^\alpha\big([0,T],\cH\big)$, the equations
  \begin{align*}
    x_t&=y_t+\int_0^t S_{t-s} f(s,x_s)\,ds+\int_0^t S_{t-s}g(s,x_s)\,d\h_s,\\
    \bar x_t&=\bar y_t+\int_0^t S_{t-s} f(s,\bar x_s)\,ds+\int_0^t S_{t-s}g(s,\bar x_s)\,d\h_s
  \end{align*}
  have a unique solution in $\hat\C^{\alpha}\big([0,T],\cH\big)$. If in addition $y_0=\bar y_0$, we have the stability estimate
  \begin{equation}\label{eq:residue_stability}
    \|x-\bar x\|_{\hat\C^\alpha\big([0,T],\cH\big)}\leq C\|y-\bar y\|_{\hat\C^\alpha\big([0,T],\cH\big)}
  \end{equation}
where the constant $C$ depends only on $f$, $g$, $\|\h\|_{\C^\gamma}$, $\|y_0\|$, $\|y\|_{\hat\C^\alpha}$, $\|\bar y\|_{\hat\C^\alpha}$, and $T$.
\end{lemma}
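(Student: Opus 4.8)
The plan is to re-run the fixed-point and a priori arguments of \cref{prop:well_posedness} with the affine term $t\mapsto S_tx_0$ replaced by the given path $y$, and then to upgrade the local contraction estimate \eqref{eq:stability_difference} to a global Gr\"onwall-type bound by iterating over subintervals of a fixed length $\rho$, carrying along the displacement $\|x_a-\bar x_a\|$ at the left endpoint of each step.

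\emph{Existence and uniqueness.} Since $\h\in\C^\gamma$ and $y\in\hat\C^\alpha([0,T],\cH)$, the relation \eqref{eq:increment_relation} together with \eqref{eq:holder_semigroup} (applied with $\kappa_1=-\alpha$, $\kappa_2=0$, so $\kappa_2-\kappa_1=\alpha\in(0,1)$) gives the embedding $\hat\C^\alpha([0,T],\cH)\hookrightarrow\C([0,T],\cH)\cap\C^\alpha([0,T],\cH_{-\alpha})$; in particular $\|y\|_\infty\lesssim\|y_0\|+\|y\|_{\hat\C^\alpha}$. One then checks, exactly as in the proof of \cref{prop:well_posedness}, that the map $(\A z)_t\define y_t+\int_0^t S_{t-s}f(s,z_s)\,ds+\int_0^t S_{t-s}g(s,z_s)\,d\h_s$ maps the complete set $\{z:[0,\rho]\to\cH:\ z_0=y_0,\ \|z\|_{\hat\C^\alpha([0,\rho],\cH)}\le\|y\|_{\hat\C^\alpha}+1\}$ into itself and is a contraction there for $\rho>0$ small; note that $y$ cancels in $\A z-\A\bar z$, so the contraction estimate is literally \eqref{eq:stability_difference} with $\bar\gamma$ replaced by $\alpha$, which is admissible because $\alpha<\gamma$ and $\alpha+\gamma>1$. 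To pass from a local to a global solution we use the observation that, for any $t_0\in[0,T)$, the restriction of a solution to $[t_0,T]$ again solves a residue equation, now with forcing $\tilde y_v\define S_{v-t_0}x_{t_0}+\hat\delta y_{t_0,v}$, and that $\hat\delta\tilde y_{s,v}=\hat\delta y_{s,v}$ for $t_0\le s\le v$, whence $\|\tilde y\|_{\hat\C^\alpha([t_0,T],\cH)}=\|y\|_{\hat\C^\alpha([t_0,T],\cH)}\le\|y\|_{\hat\C^\alpha}$. Together with the bound $\|z_t\|\lesssim\|z_{t_0}\|+\|z\|_{\hat\C^\alpha([t_0,t],\cH)}$ this allows the local construction to be iterated finitely many times across $[0,T]$, just as in \cref{prop:well_posedness}, and yields a global mild solution in $\hat\C^\alpha([0,T],\cH)$; uniqueness follows since any $\hat\C^\alpha$-solution lies in the above set on a sufficiently short interval.

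\emph{Stability.} Put $z\define x-\bar x$. On any subinterval one has $\hat\delta z_{s,t}=\hat\delta(y-\bar y)_{s,t}+\int_s^t S_{t-r}\big(f(r,x_r)-f(r,\bar x_r)\big)\,dr+\int_s^t S_{t-r}\big(g(r,x_r)-g(r,\bar x_r)\big)\,d\h_r$. Estimating the two integral terms precisely as in \eqref{eq:contraction_1}--\eqref{eq:stability_difference} — via the Lipschitz bound on $f$, \cref{cor:mild_young} with $\varpi=\alpha$, and \cref{lem:holder_composition} applied to the three bracketed increments of $g$ — and absorbing the resulting $\rho^{\gamma-\alpha}$-prefactor for $\rho$ small, one gets, for every $a\in[0,T-\rho]$,
\begin{equation*}
  \|z\|_{\hat\C^\alpha([a,a+\rho],\cH)}\le 2\|y-\bar y\|_{\hat\C^\alpha([a,a+\rho],\cH)}+C\|z_a\|,\qquad \|z_{a+\rho}\|\le C\|z_a\|+\rho^\alpha\|z\|_{\hat\C^\alpha([a,a+\rho],\cH)},
\end{equation*}
the second estimate coming from $z_{a+\rho}=S_\rho z_a+\hat\delta z_{a,a+\rho}$; all implicit constants depend only on $f,g,\|\h\|_{\C^\gamma}$ and the a priori bounds on $\|x\|_{\hat\C^\alpha},\|\bar x\|_{\hat\C^\alpha}$, hence only on the data listed in the statement. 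Writing $m_k\define\|z_{k\rho}\|$ and combining the two inequalities yields a linear recursion $m_{k+1}\le\lambda m_k+\beta\|y-\bar y\|_{\hat\C^\alpha([0,T],\cH)}$ with $m_0=\|y_0-\bar y_0\|=0$, so $m_k\le C_K\|y-\bar y\|_{\hat\C^\alpha([0,T],\cH)}$ for all $k\le K\define\lceil T/\rho\rceil$, and therefore $\|z\|_{\hat\C^\alpha([k\rho,(k+1)\rho],\cH)}\le C_K'\|y-\bar y\|_{\hat\C^\alpha([0,T],\cH)}$. Finally the interval-gluing estimate used at the end of the proof of \cref{prop:well_posedness} (with $\alpha$ in place of $\gamma$) bounds $\|z\|_{\hat\C^\alpha([0,T],\cH)}$ by a multiple, depending only on $\rho,T,\alpha$, of $\max_{k<K}\|z\|_{\hat\C^\alpha([k\rho,(k+1)\rho],\cH)}$; this gives \eqref{eq:residue_stability}.

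\emph{Main obstacle.} The delicate step is the global-from-local passage for the stability bound: in contrast with \cref{prop:well_posedness}, the constant in \eqref{eq:residue_stability} genuinely grows (like $\lambda^{T/\rho}$), since the discrepancy $\|z_a\|$ at the left endpoint of a step feeds into the next one, and one cannot treat $[0,T]$ in one go. What makes the recursion close is the bookkeeping above: restriction of the residue equation preserves the $\hat\C^\alpha$-seminorm of the forcing, and $m_0=0$ because $y_0=\bar y_0$. A secondary point is to verify that the step size $\rho$ can be chosen uniformly in $y,\bar y$, which rests on the a priori bounds on $\|x\|_{\hat\C^\alpha},\|\bar x\|_{\hat\C^\alpha}$ from the existence part; by construction these depend only on $f,g,\|\h\|_{\C^\gamma},\|y_0\|,\|y\|_{\hat\C^\alpha},\|\bar y\|_{\hat\C^\alpha}$ and $T$, as the statement requires.
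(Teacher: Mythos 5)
Your proposal is correct and follows the same route the paper indicates (the paper's own proof is only a pointer: run the contraction argument of \cref{prop:well_posedness} with $S_tx_0$ replaced by $y$ on small intervals, then use the a priori bound). You simply supply the details the paper omits --- in particular the restriction identity $\tilde y_v=S_{v-t_0}x_{t_0}+\hat\delta y_{t_0,v}$ with $\hat\delta\tilde y=\hat\delta y$, and the linear recursion in $m_k=\|z_{k\rho}\|$ with $m_0=0$ --- all of which check out.
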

\begin{proof}
The well-posedness of the equations for $x$ and $\bar x$ follows as in \cref{prop:well_posedness}. 
On a small interval $[0, \rho]$ this is the consequence of the contraction property of $\A$. By the a priori bound \eqref{eq:deterministic_apriori_bound} of solutions,  the required estimate holds.
\end{proof}

\begin{remark}
  A more refined analysis shows that in \eqref{eq:residue_stability} one could choose
  \begin{equation*}
    C=D\exp\left(D\|\h\|_{\C^\gamma}^{\frac1\gamma}+D\|y\|_{\hat\C^\alpha}^{\frac1\alpha}+D\|\bar y\|_{\hat\C^\alpha}^{\frac1\alpha}\right)
  \end{equation*}
  for a constant $D>0$ depending only on $f$, $g$, $\|y_0\|$, and the terminal time $T>0$; see \cite[Lemma 2.2]{Hairer2020} for details. To prove convergence in probability, it is however enough to have $C<\infty$ with probability $1$, see \cref{sec:slow_fast} below.
\end{remark}

\section{Mild Stochastic Sewing Lemma}
\label{mild-sewing}

Our main endeavor in the next section is to make sense of the integral
\begin{equation*}
  \int_0^{\bigcdot} g(t, X_t)\,dB_t
\end{equation*}
and to derive strong $L^p$-estimates on its H\"older norm. Here and in the sequel, the space $L^p$ is always understood over the randomness. This is based on a variant of L\^e's stochastic sewing lemma \cite{Le2020} adapted to the mild calculus used in this article, which we present in the sequel.

Let $\big(\Omega,\F,(\F_t)_{t\geq 0},\prob\big)$ be a filtered probability space.
Fix a terminal time $T>0$ and let $\mathcal{S}^p(\cH)$,  where $\cH$ is a Hilbert space,  denote the set of adapted, continuous two-parameter stochastic processes on the simplex with finite $p^\text{th}$ moments; in symbols: 
\begin{equation*}
\mathcal{S}^p(\cH)\define\left\{\Sigma:\Delta^2\times\Omega\to\cH: \, (s,t)\mapsto \Sigma_{s,t} \in  L^p(\Omega, \F_t;\cH)\text{ is continuous}\right\}.
\end{equation*}
Given $\eta,\bar{\eta}>0$, we define the spaces
\begin{align*}
    H_\eta^p(\cH)&\define\left\{\Xi\in\mathcal{S}^p(\cH):\,\|\Xi\|_{H_\eta^p}\define\sup_{0\leq s<t\leq T}\frac{\|\Xi_{st}\|_{L^p}}{|s-t|^\eta}<\infty\right\}, \\
    \bar{H}_{\bar{\eta}}^p(\cH)&\define\left\{\Xi\in\mathcal{S}^p(\cH):\,\vertiii{\Xi}_{\bar{H}_{\bar{\eta}}^p}\define\sup_{0\leq s<u<t\leq T}\frac{\|\expec{\hat\delta \Xi_{sut}|\mathcal{F}_s}\|_{L^p}}{|s-t|^{\bar{\eta}}}<\infty\right\},
\end{align*}
where we recall that 
$$\hat\delta \Xi_{s,u,t}=\Xi_{s,t}-\Xi_{u,t}-S_{t-u}\Xi_{s,u}.
$$
In the sequel we take the Hilbert space to be $\cH_\kappa$ and occasionally abbreviate $L^p(\cH_\kappa)$ to $L^p$.
As in L\^e's work, our proof of  the mild stochastic sewing lemma (\cref{prop:stochastic_mild_sewing} below) is based on the following inequality: Let $(Z_i)_{i\in\N}$ be an 
$\cH_\kappa$-valued, discrete-time stochastic process adapted to a filtration $(\F_i)_{i\in\N_0}$ and let $d_i=Z_i-\Expec{Z_i\,|\,\F_{i-1}}$ be the sequence of martingale differences. Then
\begin{align}
  \left\|\sum_{i=1}^n Z_i\right\|_{L^p}&\leq\left\|\sum_{i=1}^n \Expec{Z_i\,|\,\F_{i-1}}\right\|_{L^p}+\left\|\sum_{i=1}^n d_i\right\|_{L^p} \nonumber\\
  &\lesssim\sum_{i=1}^n\big\|\Expec{Z_i\,|\,\F_{i-1}}\big\|_{L^p}+\Big\|\sum_{i=1}^nd_i^2 \Big\|_{L^{p/2}}^{\frac12} \nonumber\\
  &\lesssim\sum_{i=1}^n\big\|\Expec{Z_i\,|\,\F_{i-1}}\big\|_{L^p}+\left(\sum_{i=1}^n\big\|Z_i\big\|_{L^p}^2\right)^{\frac12}. \label{eq:bdg_consequence_2}
\end{align}
In the second line we used Minkowski's and Burkholder's inequalities. The last step follows from another application of Minkowski's inequality. Burkholder's inequality on Hilbert spaces can be proven by the classical square function approach of Burkholder \cite{Burkholder1984}. In particular, the prefactors, which may depend on $p\geq 1$, are \emph{independent} of $n\in\N$.

Before stating the mild version of the sewing lemma,  we first make some basic error estimates.
\begin{lemma}\label{lem:sewing_help}
  Let $P=\{t_0,\dots,t_n\}$ be a partition of $[s,t]\subset[0,T]$ with $t_0=s$ and $t_n=t$. 
  Let $\kappa\in\R$. Let $p\geq 2$, $\eta>\frac12$, and $\bar{\eta}>1$. Suppose that $\Xi\in H_\eta^p(\cH_\kappa)\cap\bar{H}_{\bar{\eta}}^p(\cH_\kappa)$.  We have that
  \begin{align}
    \left\|\Expec{\sum_{i=1}^{n-1}S_{t_n-t_{i+1}}\Xi_{t_i,t_{i+1}}-\Xi_{t_0,t_n}\,\middle|\,\F_{t_0}}\right\|_{L^p(\cH_\kappa)}&\lesssim |t-s| ^{\bar\eta}, \label{eq:sewing_helper_1}\\
    \left\|\sum_{i=1}^{n-1}S_{t_n-t_{i+1}}\Xi_{t_i,t_{i+1}}-\Xi_{t_0,t_n}\right\|_{L^p(\cH_\kappa)}&\lesssim   |t-s|^{\eta}. \label{eq:sewing_helper_2}
  \end{align}
\end{lemma}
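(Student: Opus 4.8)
The plan is to prove both estimates by a telescoping-plus-iteration argument, in the style of the (stochastic) sewing lemma. Both bounds will follow by controlling the ``defect'' of the tilted Riemann sum under successive coarsenings of the partition $P$, using the basic inequality \eqref{eq:bdg_consequence_2} for the $L^p$-estimate and a simple conditional-expectation telescoping for the $\F_{t_0}$-conditional estimate. Throughout write $\Xi^P_{s,t}\define\sum_{i=0}^{n-1}S_{t_n-t_{i+1}}\Xi_{t_i,t_{i+1}}$ for the sum associated with a partition $P=\{t_0<\dots<t_n\}$ of $[s,t]$, so that the left-hand sides of \eqref{eq:sewing_helper_1}--\eqref{eq:sewing_helper_2} are the $L^p$- (resp.\ conditional-$L^p$-) norms of $\Xi^P_{s,t}-\Xi_{t_0,t_n}$.

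\textbf{Step 1: One coarsening step.} Given a partition $P$ with at least two interior points, pick a point $t_j$ to remove (choosing it, as usual, so that the two adjacent intervals are not too long — e.g.\ a point realizing $|t_{j+1}-t_{j-1}|\le 2|t-s|/(n-1)$), obtaining $P'=P\setminus\{t_j\}$. The key algebraic identity is that $\Xi^P_{s,t}-\Xi^{P'}_{s,t}$ equals a single term of the form $S_{t_n-t_{j+1}}\,\hat\delta\Xi_{t_{j-1},t_j,t_{j+1}}$, which is immediate from the definitions of $\Xi^P$ and $\hat\delta\Xi$ together with the semigroup property $S_{t_n-t_{j+1}}S_{t_{j+1}-t_j}=S_{t_n-t_j}$. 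Using $\sup_{t\le T}\|S_t\|<\infty$, this gives $\|\Xi^P_{s,t}-\Xi^{P'}_{s,t}\|_{L^p}\lesssim \|\hat\delta\Xi_{t_{j-1},t_j,t_{j+1}}\|_{L^p}$ and, after conditioning, $\|\expec{\Xi^P_{s,t}-\Xi^{P'}_{s,t}\mid\F_{t_0}}\|_{L^p}\lesssim \|\expec{\hat\delta\Xi_{t_{j-1},t_j,t_{j+1}}\mid\F_{t_{j-1}}}\|_{L^p}\lesssim\vertiii{\Xi}_{\bar H^p_{\bar\eta}}|t_{j+1}-t_{j-1}|^{\bar\eta}$, where in the second inequality we used the tower property and that $\F_{t_0}\subset\F_{t_{j-1}}$ (for the conditional bound we also use the Jensen/contraction property of conditional expectation to pull $\expec{\cdot\mid\F_{t_0}}$ inside the $L^p$-norm).

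\textbf{Step 2: Iterate down to the trivial partition.} Starting from $P$ and removing one point at a time until only $\{s,t\}$ remains, we get $\Xi^P_{s,t}-\Xi_{s,t}=\sum_{k} (\Xi^{P_k}_{s,t}-\Xi^{P_{k+1}}_{s,t})$ where $P_0=P\supset P_1\supset\dots$; at the $k$-th stage the partition has $n-k$ points and the removed point can be chosen so the relevant two-interval length is $\le 2|t-s|/(n-k-1)$. For \eqref{eq:sewing_helper_1} we bound $\|\expec{\Xi^P_{s,t}-\Xi_{s,t}\mid\F_{t_0}}\|_{L^p}\le \sum_k \vertiii{\Xi}_{\bar H^p_{\bar\eta}}\,(2|t-s|/(n-k-1))^{\bar\eta}\lesssim \vertiii{\Xi}_{\bar H^p_{\bar\eta}}\,|t-s|^{\bar\eta}\sum_{m\ge 1} m^{-\bar\eta}$, which converges since $\bar\eta>1$. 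For \eqref{eq:sewing_helper_2} the naive bound $\|\hat\delta\Xi\|_{L^p}\lesssim \|\Xi\|_{H^p_\eta}|t-s|^\eta$ alone is not summable (it only gives $\sum_k (\cdot)^\eta$ with $\eta>\tfrac12$, which diverges); instead one must split each $\hat\delta\Xi = \expec{\hat\delta\Xi\mid\F}+(\hat\delta\Xi-\expec{\hat\delta\Xi\mid\F})$ and apply the martingale inequality \eqref{eq:bdg_consequence_2} to the whole coarsening sequence at once: the conditional parts contribute $\sum_k (\cdot)^{\bar\eta}\lesssim |t-s|^{\bar\eta}\le |t-s|^\eta$ (on $[0,T]$, adjusting constants, since $\bar\eta>1>\eta$), while the martingale-difference parts contribute $(\sum_k \|\hat\delta\Xi_{t_{j-1},t_j,t_{j+1}}\|_{L^p}^2)^{1/2}\lesssim \|\Xi\|_{H^p_\eta}(\sum_{m\ge1}(|t-s|/m)^{2\eta})^{1/2}\lesssim \|\Xi\|_{H^p_\eta}|t-s|^\eta$, where the geometric-type series in $m^{-2\eta}$ converges precisely because $\eta>\tfrac12$. (One must be slightly careful that the ``$d_i$'' here are genuine martingale differences for an appropriate filtration indexed by the coarsening steps — this is the standard bookkeeping in L\^e's proof, and the adaptedness built into $\S^p$ makes it go through.)

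\textbf{Main obstacle.} The routine parts are the algebraic identity in Step 1 and the convergence of the numerical series; the delicate point, as in L\^e's original argument, is Step 2 for \eqref{eq:sewing_helper_2}: one cannot afford the crude $H^p_\eta$-bound term by term, and must instead organize the coarsening as a single martingale sum so that Burkholder's inequality (in the Hilbert-space-valued form \eqref{eq:bdg_consequence_2}) yields the $\ell^2$-type gain that makes the $\eta>\tfrac12$ series summable. Everything else — the semigroup factors $S_{t_n-t_{i+1}}$ — is harmless because $\sup_{t\le T}\|S_t\|<\infty$, so the mild version costs nothing beyond carrying these bounded operators along.
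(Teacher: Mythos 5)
Your treatment of \eqref{eq:sewing_helper_1} is correct and is a legitimate alternative to the paper's route: removing one point at a time (always one whose two adjacent intervals have total length at most $2|t-s|/(n-k-1)$), writing each correction as $-S_{t_n-t_{j+1}}\hat\delta\Xi_{t_{j-1},t_j,t_{j+1}}$, using the tower property $\F_{t_0}\subset\F_{t_{j-1}}$ together with the $L^p$-contractivity of conditional expectation, and summing $\sum_{m\geq 1}m^{-\bar\eta}<\infty$ does give \eqref{eq:sewing_helper_1}.

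The gap is in Step 2 for \eqref{eq:sewing_helper_2}. The inequality \eqref{eq:bdg_consequence_2} requires the summands $Z_1,\dots,Z_n$ to be adapted to an \emph{increasing} filtration, with each compensator $\Expec{Z_i\,|\,\F_{i-1}}$ taken with respect to the immediately preceding $\sigma$-algebra. In your one-point-at-a-time coarsening, the $k$-th correction $Z_k=-S_{t_n-t_{j_k+1}}\hat\delta\Xi_{t_{j_k-1},t_{j_k},t_{j_k+1}}$ is $\F_{t_{j_k+1}}$-measurable, and its conditional expectation is small only given $\F_{t_{j_k-1}}$. Because the removal order is dictated by interval lengths, the removed points $t_{j_1},t_{j_2},\dots$ are not monotone in time, so the $\sigma$-algebras $\F_{t_{j_k+1}}$ do not nest and $\F_{t_{j_{k+1}-1}}$ need not contain $\F_{t_{j_k+1}}$; the sequence $(Z_k)$ is therefore not adapted to any natural increasing filtration and \eqref{eq:bdg_consequence_2} cannot be applied to the coarsening sequence ``at once.'' This is also not the bookkeeping in L\^e's proof: the standard resolution, and the one the paper uses, is to abandon one-point removal and regroup the defect dyadically. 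Concretely, the paper introduces the sub-partitions $P_i^k=P\cap[t_0+i\Delta_k,t_0+(i+1)\Delta_k)$ and shows that the total defect equals $\sum_{k\geq 0}\sum_{i=0}^{2^k-1}R_i^k$, where the level-$k$ corrections $R_0^k,\dots,R_{2^k-1}^k$ live on disjoint, left-to-right ordered dyadic blocks and hence are genuinely adapted, with $R_i^k\in\F_{w_i^k}\subset\F_{r_{i+1}^k}$ and $\Expec{R_i^k\,|\,\F_{r_i^k}}$ small. Applying \eqref{eq:bdg_consequence_2} within each level gives a contribution of order $2^{k(1-\bar\eta)}|t-s|^{\bar\eta}+2^{k(\frac12-\eta)}|t-s|^{\eta}$, and the geometric sum over $k$ (using $\bar\eta>1$ and $\eta>\frac12$) yields \eqref{eq:sewing_helper_2}. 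Without some such time-ordered regrouping your Step 2 does not close, even though your series $\sum_m m^{-2\eta}$ would be summable if the martingale structure were available.
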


\begin{proof} 
Let $\Delta_k =\frac{t_n-t_0}{2^k}$ and consider sequences of points $ r_i^k\leq u_i^k\leq v_i^k\leq w_i^k$ in the  interval $[t_0+i\Delta_k, t_0+(i+1) \Delta_k]$, $i=0,\dots, 2^k-1$. 
 Let
 $$R_i^k=-S_{t_n-w_i^k}\big(S_{w_i^k-v_i^k}\; \hat\delta\Xi_{r_i^k,u_i^k, v_i^k}+\hat\delta\Xi_{r_i^k,v_i^k, w_i^k}\big).$$
Applying the assumption  $\Xi\in H_\eta^p(\cH_\kappa)\cap\bar H_{\bar\eta}^p(\cH_\kappa)$ to each term in the definition of $R_i^k$, we immediately have the
 trivial estimate
 $$  \big\|\mathbb{E}\big[R_i^k\,\big|\,\F_{r_i^k}\big]\big\|_{L^p}\lesssim (\Delta_k)^{\bar\eta}.$$
 Furthermore, 
  \begin{align*}
    \big\|R_i^k-\mathbb{E}\big[R_i^k\,\big|\,\F_{r_i^k}\big]\big\|_{L^p}&\lesssim 
     \big\|R_i^k\big\|_{L^p}\lesssim (\Delta_k)^{\eta}.
  \end{align*}
  We claim that we can calibrate $r_i^k\leq u_i^k\leq v_i^k\leq w_i^k$ in such a way that
  \begin{equation}\label{eq:dyadic_decompotion}
    \sum_{i=1}^{n-1}S_{t_n-t_{i+1}}\Xi_{t_i,t_{i+1}}-\Xi_{t_0,t_n}=\sum_{k=0}^\infty \sum_{i=0}^{2^k-1}R_i^k,
  \end{equation}
which then immediately leads to \eqref{eq:sewing_helper_1}:
  \begin{equation*}
    \left\|\Expec{\sum_{i=1}^{n-1}S_{t_n-t_{i+1}}\Xi_{t_i,t_{i+1}}-\Xi_{t_0,t_n}\,\middle|\,\F_{t_0}}\right\|_{L^p}\leq\sum_{k=0}^\infty\big\|\mathbb{E}\big[R_i^k\,\big|\,\F_{r_i^k}\big]\big\|_{L^p}\lesssim |t-s|^{\bar\eta}.
  \end{equation*}
  Similarly, the estimate (\ref{eq:bdg_consequence_2})  yields
  \begin{align*}
    \left\|\sum_{i=0}^{2^k-1}R_i^k\right\|_{L^p}
    &\lesssim\sum_{i=0}^{2^k-1}\big\|\mathbb{E}\big[R_i^k\,\big|\,\F_{r_i^k}\big]\big\|_{L^p} + \left(\sum_{i=0}^{2^k-1}\Big\|R_i^k-\mathbb{E}\big[R_i^k\,\big|\,\F_{r_i^k}\big]\Big\|_{L^p}^2\right)^{\frac12} \\
    &\lesssim 2^{k-k\bar{\eta}}|t-s|^\eta+2^{\frac{k-2k\bar \eta}{2}}|t-s|^{\bar\eta}\lesssim \frac{|t-s|^\eta}{2^{k[(\eta-1)\wedge(\eta-\frac12)]}}
  \end{align*}
  and \eqref{eq:sewing_helper_2} follows at once. 

  To conclude the proof, it remains to show \eqref{eq:dyadic_decompotion} holds. Let $Q=\{s_0,\dots,s_m\}$ be also a partition of $[s,t]$. We set 
  \begin{equation}\label{MQ}
    \M(Q) = \begin{cases}
    \sum_{i=0}^{m-1}S_{t_n-s_{i+1}}\Xi_{s_i,s_{i+1}}-S_{t_n-s_m}\Xi_{s_0,s_m}, & \text{if \#} Q>1\\
    0, &\text{otherwise.}
    \end{cases}
  \end{equation}
  We also need the following dyadic sub-partitions of $P$: Let $k\in \N_0$ and $P_i^k$ those points from $P$ falling into the $i^\text{th}$ sub-interval of the dyadic partition of level $k$:
  
  \begin{equation*}
    P_{i}^k\define P\cap\begin{cases}
    \left[i\Delta_k, (i+1)\Delta_k\right), & i=0,\dots, 2^k-2,\\
    \left[(2^k-1)\Delta_k, t_n\right], & i=2^k-1.
    \end{cases}
  \end{equation*}
  Note that each $P_i^\ell$ contains at most one point for $\ell\in\N$ sufficiently large.

   We define the random variables $\tilde R_i^k$  by
  \begin{equation}\label{eq:def_tilde_r}
    \tilde R_i^k\define\M\big(P_i^k\big)-\M\big(P_{2i}^{k+1}\big)-\M\big(P_{2i+1}^{k+1}\big).
  \end{equation}
   Since $P_i^k=P_{2i}^{k+1} \cup P_{2i+1}^{k+1}$,  $ \tilde R_i^k$ vanishes unless
 both $P_{2i}^{k+1}$ and $P_{2i+1}^{k+1}$ are non-empty. Since $P_0^0=P$, it holds that
 $$ \sum_{i=1}^{n-1}S_{t_n-t_{i+1}}\Xi_{t_i,t_{i+1}}-\Xi_{t_0,t_n}=\M(P_0^0).$$
  Also, rearranging the definition \eqref{eq:def_tilde_r} we have
  $$\M(P_0^0)=\M(P_0^1)+\M(P_1^1) + \tilde R_0^0.$$ 
  Iterating this identity shows that, for any $\ell\in \N$,
  \begin{equation*}
    \sum_{i=1}^{n-1}S_{t_n-t_{i+1}}\Xi_{t_i,t_{i+1}}-\Xi_{t_0,t_n} = \sum_{i=0}^{2^\ell-1}\M\big(P_i^\ell\big)+\sum_{k=0}^{\ell-1}\sum_{i=0}^{2^k-1}\tilde R_i^k.
  \end{equation*}
 Recall that, for $\ell$ sufficiently large, we have $\M\big(P_i^\ell\big)=0$.

  Finally, if either of $P_{2i}^{k+1}$ and $P_{2i+1}^{k+1}$ is empty, then $\tilde R_i^k=0$ so that we can choose $$r_i^k=u_i^k=v_i^k=w_i^k=i\Delta_k.$$
   Otherwise, we can pick
  \begin{equation*}
    r_i^k\define\min\big(P_{2i}^{k+1}\big),\quad u_i^k\define\max\big(P_{2i}^{k+1}\big), \quad v_i^k\define\min\big(P_{2i+1}^{k+1}\big),\quad w_i^k\define\max\big(P_{2i+1}^{k+1}\big).
  \end{equation*}
  In fact since $P_i^k=P_{2i}^{k+1}\cup P_{2i+1}^{k+1}$, for the numbers $r_i^k, u_i^k, v_i^k, w_i^k$ constructed earlier we have that 
  \begin{align*}
    \tilde R_i^k&=S_{t_n-v_i^k}\Xi_{u_i^k,v_i^k}+S_{t_n-u_i^k}\Xi_{r_i^k,u_i^k}+S_{t_n-w_i^k}\Xi_{v_i^k,w_i^k}-S_{t_n-w_i^k}\Xi_{r_i^k,w_i^k}\\
    &=-S_{t_n-w_i^k}\big(S_{w_i^k-v_i^k}\hat\delta\Xi_{r_i^k,u_i^k, v_i^k}+\hat\delta\Xi_{r_i^k,v_i^k, w_i^k}\big).
  \end{align*}
We define $R_i^k$ to be $\tilde R_i^k$ to conclude the construction of (\ref{eq:dyadic_decompotion}) and the proof of the proposition.
\end{proof}

Next, we present the main result of this section:
\begin{proposition}[Mild stochastic sewing lemma]\label{prop:stochastic_mild_sewing}
  Let $\kappa\in\R$. Let $p\geq 2$, $\eta>\frac12$, and $\bar{\eta}>1$. Suppose that $\Xi\in H_\eta^p(\cH_\kappa)\cap\bar{H}_{\bar{\eta}}^p(\cH_\kappa)$. Then, \begin{enumerate}
  \item for every $0\leq s\leq t\leq T$, the limit
    \begin{equation}\label{eq:sewing_limit}
        \I\Xi_{s,t}\define\lim_{|P|\to 0}\sum_{[u,v]\in P}S_{t-v}\Xi_{u,v}
    \end{equation}
    exists in $L^p(\Omega;\cH_\kappa)$ along any sequence of partitions $P$ of $[s,t]$ with mesh tending to zero and the process $\I\Xi$ is additive in the following sense:
\begin{equation*}
S_{t-s}\I\Xi_{r,s}+\I\Xi_{s,t}=\I\Xi_{r,t} \qquad \forall\, (r,s,t)\in\Delta^3.
\end{equation*}
Moreover $\I\Xi$ vanishes if $\|\Xi_{s,t}\|_{L^p(\cH_\kappa)}\lesssim|t-s|^{\bar{\eta}}$.
 \item  If, in addition, for any $(r,s,t)\in\Delta^3$, there exists $\Lambda$ such that 
     $$\hat\delta \Xi_{r,s,t}=S_{t-s}\Lambda_{r,s,t}$$ and such that 
    \begin{equation} \label{Lambda-bound}
      \big\|\Lambda_{r,s,t}\big\|_{L^p(\cH_{\kappa})}\leq C|t-r||t-s|^{\bar{\eta}-1},
    \end{equation}
    then, for any $\varpi\in[0,\bar\eta)$,
    \begin{equation}\label{eq:mild_stochastic_sewing}
        \big\|\I\Xi_{s,t}-\Xi_{s,t}\big\|_{L^p(\cH_{\kappa+\varpi})}\lesssim C|t-s|^{\bar{\eta}-\varpi}
    \end{equation}
    for all $0\leq s\leq t\leq T$. 
  \end{enumerate}\end{proposition}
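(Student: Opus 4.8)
The plan is to follow L\^e's strategy \cite{Le2020}, using \cref{lem:sewing_help} as the key quantitative input. First I would prove existence of the limit \eqref{eq:sewing_limit}. Given two partitions $P\subset P'$ of $[s,t]$, consider the difference of the corresponding Riemann sums $\sum_{[u,v]\in P}S_{t-v}\Xi_{u,v}-\sum_{[u,v]\in P'}S_{t-v}\Xi_{u,v}$; grouping the points of $P'$ inside each interval $[u,v]\in P$, this difference is a sum over $[u,v]\in P$ of terms of the shape $S_{t-v}\big(\sum_{P'\cap[u,v]}S_{v-v'}\Xi_{u',v'}-\Xi_{u,v}\big)$. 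To each such bracketed term I apply \eqref{eq:sewing_helper_2} of \cref{lem:sewing_help} (with $[u,v]$ in place of $[s,t]$), giving an $L^p(\cH_\kappa)$-bound $\lesssim |v-u|^\eta$; using the uniform boundedness of $S_{t-v}$, Minkowski, and $\eta>\tfrac12>1$ — more precisely, summing $|v-u|^\eta$ over a partition of $[s,t]$ of mesh $\le|P|$ yields a bound $\lesssim |P|^{\eta-1}|t-s|$ — we see the Riemann sums form a Cauchy net in $L^p$ as $|P|\to0$. Hence the limit $\I\Xi_{s,t}$ exists. Additivity $S_{t-s}\I\Xi_{r,s}+\I\Xi_{s,t}=\I\Xi_{r,t}$ follows by taking partitions of $[r,t]$ that contain $s$ as a point and passing to the limit, using the semigroup property $S_{t-v}=S_{t-s}S_{s-v}$ for $v\le s$. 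The vanishing claim: if $\|\Xi_{s,t}\|_{L^p(\cH_\kappa)}\lesssim|t-s|^{\bar\eta}$ with $\bar\eta>1$, then each Riemann sum is bounded in $L^p$ by $\sum_{[u,v]\in P}\|\Xi_{u,v}\|_{L^p}\lesssim |P|^{\bar\eta-1}|t-s|\to0$, so $\I\Xi\equiv0$.

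Next I would establish the conditional/unconditional a priori bounds on $\I\Xi_{s,t}-\Xi_{s,t}$ at the base level $\varpi=0$. Fixing a sequence of partitions $P$ of $[s,t]$ with $|P|\to0$ and writing $A_P\define\sum_{[u,v]\in P}S_{t-v}\Xi_{u,v}-\Xi_{s,t}$, \cref{lem:sewing_help} gives $\|\E[A_P\,|\,\F_s]\|_{L^p(\cH_\kappa)}\lesssim|t-s|^{\bar\eta}$ (from \eqref{eq:sewing_helper_1}) and $\|A_P\|_{L^p(\cH_\kappa)}\lesssim|t-s|^{\eta}$ (from \eqref{eq:sewing_helper_2}), with prefactors uniform in $P$; passing to the $L^p$-limit gives $\|\E[\I\Xi_{s,t}-\Xi_{s,t}\,|\,\F_s]\|_{L^p(\cH_\kappa)}\lesssim|t-s|^{\bar\eta}$ and $\|\I\Xi_{s,t}-\Xi_{s,t}\|_{L^p(\cH_\kappa)}\lesssim|t-s|^{\eta}$. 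Now, under the additional hypothesis that $\hat\delta\Xi_{r,s,t}=S_{t-s}\Lambda_{r,s,t}$ with the bound \eqref{Lambda-bound}, I want to upgrade the $|t-s|^\eta$ to $|t-s|^{\bar\eta}$ and simultaneously pick up smoothing in the interpolation scale. The idea is to apply the general existence result to the modified germ $\tilde\Xi_{s,t}\define\I\Xi_{s,t}-\Xi_{s,t}$: one checks $\hat\delta\tilde\Xi_{r,s,t}=-\hat\delta\Xi_{r,s,t}=-S_{t-s}\Lambda_{r,s,t}$ (using additivity of $\I\Xi$, so $\hat\delta(\I\Xi)_{r,s,t}=0$), and from the two a priori bounds above $\tilde\Xi\in H_\eta^p(\cH_\kappa)\cap\bar H_{\bar\eta}^p(\cH_\kappa)$ — the latter precisely because $\|\E[\hat\delta\tilde\Xi_{r,s,t}\,|\,\F_r]\|_{L^p}=\|\E[S_{t-s}\Lambda_{r,s,t}\,|\,\F_r]\|_{L^p}\lesssim|t-s|^{\bar\eta}$ by \eqref{Lambda-bound}. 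Moreover $\I\tilde\Xi=0$ by additivity of $\I\Xi$ again, since $\sum S_{t-v}\tilde\Xi_{u,v}=\sum S_{t-v}(\I\Xi_{u,v}-\Xi_{u,v})=\I\Xi_{s,t}-\sum S_{t-v}\Xi_{u,v}\to\I\Xi_{s,t}-\I\Xi_{s,t}=0$. Hence $\tilde\Xi_{s,t}=\tilde\Xi_{s,t}-\I\tilde\Xi_{s,t}$, and I need a refined error bound for $\tilde\Xi-\I\tilde\Xi$ at this level.

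To get the full estimate \eqref{eq:mild_stochastic_sewing} with the gain $\varpi$, I would redo the dyadic analysis of \cref{lem:sewing_help} keeping track of the $\Lambda$-structure and of the interpolation smoothing \eqref{eq:semigroup_interpolation}. Concretely: in the decomposition \eqref{eq:dyadic_decompotion}, each $R_i^k=-S_{t_n-w_i^k}\big(S_{w_i^k-v_i^k}\hat\delta\Xi_{r_i^k,u_i^k,v_i^k}+\hat\delta\Xi_{r_i^k,v_i^k,w_i^k}\big)$ now equals $-S_{t_n-w_i^k}S_{w_i^k-v_i^k}S_{v_i^k-u_i^k}\Lambda_{r_i^k,u_i^k,v_i^k}-S_{t_n-w_i^k}S_{w_i^k-v_i^k}\Lambda_{r_i^k,v_i^k,w_i^k}$, where I have used $\hat\delta\Xi_{r,s,t}=S_{t-s}\Lambda_{r,s,t}$. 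For the error estimate at the last dyadic level $[s,t]$ (i.e.\ taking $[s,t]=[u,v]$ a single interval of the outer partition and summing over the refinement), the leading factor $S_{t-w_i^k}$ with $t-w_i^k\ge t-(\text{sub-interval})$ does not a priori smooth, but — crucially — for the error bound \eqref{eq:mild_stochastic_sewing} one estimates $\|\I\Xi_{s,t}-\Xi_{s,t}\|_{\cH_{\kappa+\varpi}}$, and $\hat\delta(\I\Xi-\Xi)_{r,s,t}=-S_{t-s}\Lambda_{r,s,t}$ already carries a factor $S_{t-s}$ with $t-s\gtrsim\Delta_k$ on the $k$-th dyadic block, so by \eqref{eq:semigroup_interpolation} $\|S_{t-s}\Lambda\|_{\cH_{\kappa+\varpi}}\lesssim (\Delta_k)^{-\varpi}\|\Lambda\|_{\cH_\kappa}\lesssim(\Delta_k)^{-\varpi}\cdot|t-r|(\Delta_k)^{\bar\eta-1}\lesssim(\Delta_k)^{\bar\eta-\varpi}\cdot 2^{-k}|t_0-t_n|$; the same bound survives the outer semigroup $S_{t_n-w_i^k}$ by uniform boundedness, and $\|\E[R_i^k\,|\,\F_{r_i^k}]\|_{L^p(\cH_{\kappa+\varpi})}\lesssim(\Delta_k)^{\bar\eta-\varpi}$ follows, using $\bar\eta>1$. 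Then summing the geometric-type series $\sum_k 2^k (\Delta_k)^{\bar\eta-\varpi}=|t-s|^{\bar\eta-\varpi}\sum_k 2^{k(1-(\bar\eta-\varpi))}$ converges provided $\bar\eta-\varpi>1$; for $\varpi\ge\bar\eta-1$ one instead interpolates, trading a fraction of the $\bar\eta$-regularity, or one notes that the martingale part contributes $2^{(k-2k(\bar\eta-\varpi))/2}(\Delta_k)^{\eta'}$ — here the condition $\eta>\tfrac12$ again gives summability — to conclude the bound for all $\varpi\in[0,\bar\eta)$. Passing to the limit over the partitions then yields \eqref{eq:mild_stochastic_sewing}. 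The main obstacle I expect is precisely this bookkeeping: ensuring that the smoothing factor $(\Delta_k)^{-\varpi}$ from \eqref{eq:semigroup_interpolation} is available on every dyadic scale (it is, because $\hat\delta(\I\Xi-\Xi)$ is itself $S_{(\cdot)}\Lambda$), and that the two contributions — the conditional-expectation term controlled by $\bar\eta>1$ and the martingale term controlled by $\eta>\tfrac12$ — both remain summable after the loss of $\varpi$ derivatives, which forces the restriction $\varpi<\bar\eta$ and a careful choice of which of the two bounds to use in each regime.
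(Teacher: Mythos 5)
Your existence argument has a genuine gap. After applying \eqref{eq:sewing_helper_2} to each bracketed term you sum the resulting bounds $|v-u|^{\eta}$ with the triangle inequality, obtaining $\sum_{[u,v]\in P}|v-u|^{\eta}\leq |P|^{\eta-1}|t-s|$; since only $\eta>\frac12$ is assumed (your ``$\eta>\frac12>1$'' is a slip), this does \emph{not} tend to zero as $|P|\to 0$ unless $\eta>1$. This is exactly the point where the stochastic sewing lemma departs from the deterministic one: you must split each local error $Z_i$ into $\E[Z_i\,|\,\F_{t_i}]$ plus a martingale difference, as in \eqref{eq:bdg_consequence_2}. The conditional parts are summed with the triangle inequality using \eqref{eq:sewing_helper_1} and the exponent $\bar\eta>1$, while the martingale differences are summed in $\ell^2$ via Burkholder, yielding $\big(\sum_i|t_{i+1}-t_i|^{2\eta}\big)^{1/2}\lesssim |P|^{\eta-\frac12}|t-s|^{\frac12}\to 0$ precisely because $\eta>\frac12$. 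Without this splitting the Cauchy property fails in the regime $\eta\in(\frac12,1]$ for which the lemma is designed.

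Your error estimate also falls short of the claimed range: bounding the semigroup smoothing on each dyadic block by $(\Delta_k)^{-\varpi}$ gives a level-$k$ contribution $2^k(\Delta_k)^{\bar\eta-\varpi}=(t-s)^{\bar\eta-\varpi}2^{k(1-(\bar\eta-\varpi))}$, summable only for $\varpi<\bar\eta-1$. The repairs you sketch do not close the gap: invoking Burkholder on the level-$k$ sum improves this at best to $\varpi<\bar\eta-\frac12$, and interpolating in the scale $\cH_{\kappa+\varpi}$ against the $\varpi=0$ bound only reproduces exponents already below $\bar\eta-1$. The idea you are missing is that in the dyadic telescoping $\Xi^{\bD_{n+1}}_{s,t}-\Xi^{\bD_{n}}_{s,t}=-\sum_{[u,w]\in\bD_n}S_{t-\frac{u+w}{2}}\Lambda_{u,\frac{u+w}{2},w}$ the semigroup distance $t-\frac{u+w}{2}$ is measured from the \emph{global} right endpoint $t$, not from within the block; most blocks therefore lose essentially nothing. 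Quantitatively, $\frac{t-s}{2^n}\sum_{[u,w]\in\bD_n}\big(t-\frac{u+w}{2}\big)^{-\varpi+\epsilon}$ is a Riemann sum for $\int_s^t(t-r)^{-\varpi+\epsilon}\,dr\lesssim(t-s)^{1-\varpi+\epsilon}$ with $\epsilon\in\big((\varpi-1)_+,\bar\eta-1\big)$, and the remaining factor $2^{-(n+1)(\bar\eta-1-\epsilon)}$ is summable; this is what delivers the full range $\varpi\in[0,\bar\eta)$. (Your detour through $\tilde\Xi=\I\Xi-\Xi$ is harmless but unnecessary: the paper telescopes the dyadic Riemann sums of $\Xi$ directly, and that step needs no martingale argument at all.)
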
 

\begin{proof}
  \dashuline{\textbf{Step 1: Existence.}}  
  We first show the existence of the limit \eqref{eq:sewing_limit} in $L^p$. Let $P=\{t_i\}_{i=1}^{n}$ and $P^\prime=\{t_i^\prime\}_{i=1}^{n^\prime}$  be two partitions of $[s,t]$ and  $Q\define P\cup P^\prime=\{s_i\}_{i=1}^{m}$ their mutual refinement.
    Denote 
    $$\Xi^P=\sum_{[u,v]\in\mathcal{P}}S_{t-v}\Xi_{u,v}$$ 
    and similar for the other partitions. Define 
    $$
    Z_i=\sum_{j:s_j\in[t_i,t_{i+1})}S_{t-s_{j+1}}\Xi_{s_j,s_{j+1}}-S_{t-t_{i+1}}\Xi_{t_i,t_{i+1}}
    $$
  so that
  \begin{equation*}
    \Xi^{Q}-\Xi^P=\sum_{i=1}^{n-1}Z_i.
  \end{equation*}
  The inequality \eqref{eq:bdg_consequence_2} and \cref{lem:sewing_help} applied to each sub-interval $[t_i,t_{i+1}]$, therefore show that
  \begin{align*}
    \big\|\Xi^Q-\Xi^P\big\|_{L^p(\cH_\kappa)}&\lesssim\sum_{i=1}^{n-1}\big\|\Expec{Z_i\,|\,\F_{t_i}}\big\|_{L^p}+\left(\sum_{i=1}^{n-1}\|Z_i\|_{L^p}^2\right)^{\frac12}\\
    &\lesssim\sum_{i=1}^{n-1}|t_{i+1}-t_i|^{\bar\eta}+\left(\sum_{i=1}^{n-1}|t_{i+1}-t_i|^{2\eta}\right)^{\frac12}\lesssim |P|^{\rho},
  \end{align*}
  where $\rho\define (\bar\eta-1)\wedge (\eta-\frac12)>0$. Similarly we find $\|\Xi^Q-\Xi^{P^\prime}\|_{L^p(\cH_\kappa)}\lesssim |P^\prime|^{\rho}$, whence
  \begin{equation*}
    \|\Xi^P-\Xi^{P^\prime}\|_{L^p(\cH_\kappa)}\lesssim \big(|P|\vee |P^\prime|\big) ^{\rho}.
  \end{equation*}
  Consequently, the sequence $(\Xi^{P_n})_n$ is Cauchy in $L^p$ along any sequence of partitions $P_n$ of $[s,t]$ with mesh $|P_n|\to 0$, and the existence of $\I\Xi_{s,t}$ follows. Note that the asserted `additivity' $S_{t-u}\Xi^P_{s,u}+\Xi^P_{u,t}=\Xi^P_{s,t}$ holds for each partition $P$. It passes immediately to the limit $\I\Xi$ from this construction.

  \dashuline{\textbf{Step 2: Uniqueness.}} Suppose that $\|\Xi_{s,t}\|_{L^p(\cH_\kappa)}\lesssim |t-s|^{\bar\eta}$. We show that then $\I\Xi=0$. This is in fact an immediate consequence of the a priori $L^p$-bound:
  \begin{equation*}
    \|\I\Xi_{s,t}\|_{L^p(\cH_\kappa)}\leq\lim_{|P|\to 0}\sum_{[u,v]\in P}\|\Xi_{u,v}\|_{L^p(\cH_\kappa)}\lesssim \lim_{|P|\to 0}|P|^{\bar\eta-1}=0.
  \end{equation*}

  \dashuline{\textbf{Step 3: The bound \eqref{eq:mild_stochastic_sewing}.}} We consider an approximation of $\I\Xi_{s,t}$ along the dyadic rationals in the interval $[s,t]$. Recall our notation 
  $\bD_n=\{s+k\frac{t-s}{2^n}:\,k=0,\dots,2^n\}$. Then we have
   \begin{align*}
    \Xi^{\bD_{n+1}}_{s,t}&=\sum_{[u,v]\in \bD_{n+1}}S_{t-v}\Xi_{u,v}\\
    &=\Xi^{\bD_{n}}_{s,t}-\sum_{[u,w]\in \bD_n}S_{t-w}
 \Big(\Xi_{u,w} -\Xi_{\f {w+u} 2,w} -S_{w-\f {w+u} 2}\Xi_{u,\f {w+u} 2} \Big) \\
 &=\Xi^{\bD_{n}}_{s,t}-\sum_{[u,w]\in \bD_n}S_{t-w}\hat\delta\Xi_{u,\frac{w+u}{2}, w}.
 \end{align*}
  Hence, we find 
  \begin{align*}
    \big\|\Xi^{\bD_{n+1}}_{s,t}-\Xi^{\bD_{n}}_{s,t}\big\|_{L^p(\cH_{\kappa+\varpi})}
       &=\bigg\|\sum_{[u,w]\in \bD_n}S_{t-w}\hat\delta\Xi_{u,\frac{w+u}{2}, w}\bigg\|_{L^p(\cH_{\kappa+\varpi})}\\
          &=\bigg\|\sum_{[u,w]\in \bD_n} S_{t-\frac{w+u}{2}}\Lambda _{u,\frac{w+u}{2}, w}\bigg\|_{L^p(\cH_{\kappa+\varpi})}.
  \end{align*}
 Applying the smooothing property of $(S_t)_{t\geq 0}$,
  $\|S_t x\|_{\cH_{\kappa+\varpi}}\lesssim t^{-\varpi}\| x\|_{\cH_{\kappa}}$ (see \eqref{eq:semigroup_interpolation}), we have
    \begin{align*}
      \big\|\Xi^{\bD_{n+1}}_{s,t}-\Xi^{\bD_{n}}_{s,t}\big\|_{L^p(\cH_{\kappa+\varpi})}
      &\lesssim \sum_{[u,w]\in \bD_n} \left(t-\frac{w+u}{2}\right)^{-\varpi}\Big\|\Lambda _{u,\frac{w+u}{2}, w}\Big\|_{L^p(\cH_{\kappa})}\\
     &\lesssim \frac{C (t-s)^{\bar \eta}}{2^{n+(n+1)(\bar \eta-1)}}\sum_{[u,w]\in \bD_n}\left(t-\frac{w+u}{2}\right)^{-\varpi}\\
        &\lesssim \frac{C (t-s)^{\bar \eta}}{2^{n+(n+1)(\bar \eta-1)}}
      \sum_{[u,w]\in \bD_n}\left(t-\frac{w+u}{2}\right)^{-\varpi}.
  \end{align*}
 Since $(t-\frac{w+u}{2})\geq \f{t-s}{2^{n+1}}$, for each $\epsilon\in\big((\varpi-1)_+,\bar\eta-1\big)$,
  \begin{align*}
    \sup_{n\in\N}\frac{t-s}{2^n}\left(\frac{t-s}{2^{n+1}}\right)^\epsilon\sum_{[u,w]\in \bD_n}\left(t-\frac{w+u}{2}\right)^{-\varpi}&\leq\sup_{n\in\N}\frac{t-s}{2^n}\sum_{[u,w]\in \bD_n}\left(t-\frac{w+u}{2}\right)^{-\varpi+\epsilon}\\
    &\leq\int_s^t (t-r)^{-\varpi+\epsilon}\,dr\lesssim (t-s)^{1-\varpi+\epsilon},
  \end{align*}
  where we used the fact that the Riemann sum increases monotonically to the integral. In particular, we see that
  \begin{equation*}
    \big\|\Xi^{\bD_{n+1}}_{s,t}-\Xi^{\bD_{n}}_{s,t}\big\|_{L^p(\cH_{\kappa+\varpi})}
    \lesssim\frac{C (t-s)^{\bar \eta-\bar \varpi}}{2^{(n+1)(\bar \eta-1-\epsilon)}}.
  \end{equation*}
  Since $\bar \eta-1-\epsilon>0$, this gives
  \begin{equation*}
    \big\|\I\Xi_{s,t}\big\|_{L^p(\cH_{\kappa+\varpi})}\leq\sum_{n=0}^\infty\big\|\Xi^{\bD_{n+1}}_{s,t}-\Xi^{\bD_{n}}_{s,t}\big\|_{L^p(\cH_{\kappa+\varpi})}\lesssim C(t-s)^{\bar\eta-\varpi},
  \end{equation*}
  as required.
 \end{proof}
\begin{remark}
After putting the article on the arXiv,  we learnt from Khoa L\^e a nice trick to deduce part (i)  of \Cref{prop:stochastic_mild_sewing}
 directly from the stochastic sewing lemma  in \cite{Le-sewing-banach-21}, where the usual incremental process is used.
To see this let us fix a terminal time $t>0$ and for $u<v<w\le t$, and set
$$A_{u,v}=S_{t-v}\Xi _{u,v}.$$
Then $$\delta A_{u,v, w}=S_{t-w}(\Xi_{u,w}-\Xi_{v,w}-S_{t-v}\Xi_{u,v})=S_{t-w} \hat\delta \Xi_{u,v,w}.$$
Naturally, $\Xi\in H_\eta^p(\cH_\kappa)\cap\bar{H}_{\bar{\eta}}^p(\cH_\kappa)$ implies that 
\begin{align*}
 \sup_{0\leq s<t\leq T}\frac{\|A_{uv}\|_{L^p}}{|u-v|^\eta}<\infty,  \qquad
\frac{\|\expec{\delta A_{sut}|\mathcal{F}_s}\|_{L^p}}{|s-t|^{\bar{\eta}}}<\infty,
\end{align*}
and \cite[Theorem A]{Le-sewing-banach-21} allows to conclude that $\I\Xi_{s,t}=\lim \sum_{[u,v]}S_{t-v}\Xi_{u,v}$.
The same trick does not seem to deduce the error bound in \Cref{prop:stochastic_mild_sewing} from the corresponding estimate in
\cite[Theorem 4.1]{Athreya-Le-Butkovsky-Mytnik}.  The conditions in that article appear to be of more deterministic nature. The `seesaw' allowing us to trade interpolation space regularity for time regularity is crucial for the application in the sequel.
 Let $s<u<w \le t$. In  \cite[Theorem 4.1]{Athreya-Le-Butkovsky-Mytnik} two conditions are imposed, the first of  which is:
$\|\E[\delta A_{s,u,w}|\F_s]\|_{L^p} \lesssim u^{-\alpha_1}|t-u|^{-\beta_1}|w-s|^{1+\epsilon_1}$ where $\alpha_1, \beta_1\in [0, 1)$. 
The second  is on the conditional norm of the $L_p$ norm of $\delta A_{s,u,w}$ with constants $\alpha_2, \beta_2$ restricted to $ [0, \f12)$---note no analogous condition is needed here. Then the authors obtained an estimate of the order $ |t-s|^{\bar \eta-1}$. In \Cref{eq:mild_stochastic_sewing} the estimate is of the order
$|t-s|^{\bar{\eta}-\varpi}$ with $\varpi \in [0, \bar \eta)$ and $\bar \eta>1$. In addition, given the restriction  $\alpha_1, \beta_1\in [0, 1)$, it does not seem possible to deduce the aforementioned condition on $\|\E[\delta A_{s,u,w}|\F_s]\|_{L^p}$ from our bound $ \big\|\Lambda_{r,s,t}\big\|_{L^p(\cH_{\kappa})}\leq C|w-s||w-u|^{\bar{\eta}-1}$. In fact, from the inequality
\begin{equation}
\big\|\delta A_{s,u,w}\big\|_{\cH_{\kappa+\alpha}}\lesssim \|S_{t-w}\|_{L(\cH_\kappa, \cH_{\kappa+\alpha})} \|\delta \Xi_{x, u,w}\|_{\cH_\kappa}
\lesssim |t-w|^{-\alpha}\big\|\hat\delta \Xi_{x, u,w}\big\|_{\cH_\kappa},
\end{equation}
one has only
$\|\delta A_{s,u,w}\|_{L^p(\cH_{\kappa+\varpi})}\lesssim  |t-w|^{-\varpi}|w-s||w-u|^{\bar \eta -1}\lesssim  |t-w|^{-\varpi}|w-s|^{\bar \eta}$.
One can attempt to use $\alpha_1=0$, $\beta_1=\varpi$ and $\epsilon_1=\bar \eta -1$. However $\varpi$ may fall out of the permissible range of $\beta_1$. 
\end{remark}

\section{Uniform $L^p$-Estimates}\label{sec:lp_estimates}

The aim of this section is to leverage the mild stochastic sewing lemma (\cref{prop:stochastic_mild_sewing}) to obtain strong $L^p$-bounds on the SPDE
\begin{equation}\label{eq:spde}
  dx_t=\big(Ax_t+f(t,x_t)\big)\,dt+g(t,x_t)\,dB_t,
\end{equation}
where $B$ is a trace-class fBm, see \cref{sec:fbm} for details. We emphasize that these bounds do \emph{not} follow from the pathwise approach taken in \cref{sec:well_posed}.

\subsection{Trace-Class Fractional Brownian Motions}\label{sec:fbm}
 A one-dimensional fractional Brownian motion (fBm) with Hurst parameter $H\in(0,1)$ is the centered Gaussian process $(\beta_t)_{t\geq 0}$ with covariance
\begin{equation*}
  \Expec{\beta_s \beta_t}=\frac12\left(t^{2H}+s^{2H}-|t-s|^{2H}\right)\qquad\forall\,s,t\geq 0.
\end{equation*}
For $H=\frac12$, this is the standard Wiener process.

Let $Q\in L(\cK)$ be a symmetric, non-negative trace-class operator on a separable Hilbert space $\cK$.  Recall a trace-class operator is a compact. Let $\{\lambda_n\}_{n\in\N}\subset\R_+$ be its discrete family of eigenvalues, counted with multiplicity. The associated normalized eigenvectors $Qe_n=\lambda_n e_n$ form an orthonormal basis of $\cK$. We refer the readers to the classical book by Da Prato and Zabczyk \cite{DaPrato1992}. Albeit the interest there lies in Markovian systems, we follow their terminology.

  \begin{definition}\label{def:trace_class_fbm} 
 Let $Q:\cK\to\cK$ be a symmetric, non-negative definite trace-class operator. An $\cK$-valued centred Gaussian process $(B_t)_{t\geq 0}$ is called a \emph{fractional Brownian motion (fBm)} with covariance $Q$ and Hurst parameter $H$ if 
  \begin{equation*}
    \Expec{\braket{B_s, x}\braket{B_t,y}}=\frac{1}{2}\left(t^{2H}+s^{2H}-|t-s|^{2H}\right)\braket{Qx,y}\qquad\forall\,s,t\geq 0\quad\forall\,x,y\in\cK.
  \end{equation*}
\end{definition}

\begin{remark}\label{trace}
In the study of stochastic partial differential equations driven by Q-fBms, it is often assumed that $\sqrt{Q}$ is trace-class. See e.g. Garrido-Atienza-Lu-Schmalfu{\ss} \cite{Garrido2010}, Pei-Xu-Bai \cite{Pei2020},  Nascimento-Ohashi \cite{Nascimento2021}, and \cite{Maslowski-Nualart}. We emphasize that we do \emph{not} impose this stronger condition.
\end{remark}

Set $\beta_t^n=\f 1{ \sqrt{\lambda_n}} \<B_t, e_n\>$. Then, $\{\beta^n\}$ are independent one-dimensional fBm's with parameter $H$. Conversely, for any orthonormal basis $\{e_n\}_{n\in\N}\subset\cK$ and any sequence $(\beta^n)_{n\in\N}$ of i.i.d. one-dimensional fBms, the series
\begin{equation}\label{eq:trace_class_fbm}
  B_t=\sum_{n=1}^\infty \sqrt{\lambda_n}\beta^n_t e_n,\qquad t\geq 0,
\end{equation}
which converges almost surely and in $L^p$ ($p\geq 1$), is a $Q$-fBm:
\begin{lemma}\label{lem:uniform_convergence}
A trace-class fBm $(B_t)_{t\in[0,T]}$ with Hurst parameter $H$ takes values in $\C^{H-}\big([0,T],\cK\big)$ with probability one. 

   Let $(\beta_t^n)$ be  i.i.d. one-dimensional centred Gaussian processes with continuous sample paths with finite variance with $t\in [0,T]$.  Then, for any sequence $(\lambda_n)\in\ell^1(\R_+)$ and any orthonormal system $(e_n)_{n\in\N}$ of $\cK$, the series 
$
  \sum_{n=1}^\infty\sqrt{\lambda_n}\beta_t^n e_n$ 
   converges in $\C\big([0,T],\cK\big)$ $\prob$-almost surely. 
\end{lemma}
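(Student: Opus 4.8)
The plan is to treat the two assertions separately, in the order: first the almost-sure convergence of the series (which in particular furnishes a continuous version of any trace-class fBm through \eqref{eq:trace_class_fbm}), then the $\C^{H-}$-regularity via Kolmogorov's continuity theorem applied directly in the Hilbert space $\cK$.

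For the series, write $S_N(t)=\sum_{n=1}^N\sqrt{\lambda_n}\beta^n_te_n$ and regard $S_N$ as a random element of the Banach space $E\define\C([0,T],\cK)$ with the supremum norm. Orthonormality of $(e_n)$ gives, for $M<N$, the pointwise identity $\|S_N(t)-S_M(t)\|_\cK^2=\sum_{n=M+1}^N\lambda_n(\beta^n_t)^2$, hence $\|S_N-S_M\|_E^2\le\sum_{n=M+1}^N\lambda_n\sup_{t\in[0,T]}(\beta^n_t)^2$. Each $\beta^n$ has continuous, hence bounded, paths on $[0,T]$, so Fernique's theorem on finiteness of Gaussian moments yields $c\define\E\big[\sup_{t\in[0,T]}(\beta^n_t)^2\big]<\infty$, with $c$ independent of $n$ by the identical-distribution hypothesis. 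Therefore $\E\big[\|S_N-S_M\|_E^2\big]\le c\sum_{n>M}\lambda_n\to0$ because $(\lambda_n)\in\ell^1$, so $(S_N)$ is Cauchy in $L^2(\Omega;E)$ and converges in probability in $E$. Since the summands $t\mapsto\sqrt{\lambda_n}\beta^n_te_n$ are independent and symmetric, I would upgrade this to almost-sure convergence by symmetrisation: by L\'evy's maximal inequality, $\prob\big(\sup_{k>M}\|S_k-S_M\|_E>\varepsilon\big)\le 2\sup_{N>M}\prob\big(\|S_N-S_M\|_E>\varepsilon\big)\lesssim\varepsilon^{-2}\sum_{n>M}\lambda_n\to0$, and since the tail oscillations $V_M\define\sup_{k>M}\|S_k-S_M\|_E$ satisfy $V_{M'}\le 2V_M$ for $M'\ge M$, this forces $V_M\to0$ almost surely; completeness of $E$ then gives almost-sure convergence. (Alternatively one may invoke the It\^o--Nisio theorem directly.)

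For the regularity of $B$, note that by \cref{def:trace_class_fbm} the increment $B_t-B_s$ is a centred $\cK$-valued Gaussian vector with covariance $|t-s|^{2H}Q$, hence equal in law to $|t-s|^HG$ where $G\define\sum_n\sqrt{\lambda_n}\xi_ne_n$ with $(\xi_n)$ i.i.d.\ standard normals. For every $m\in\N$ one has $\E\big[\|G\|_\cK^{2m}\big]=\E\big[\big(\sum_n\lambda_n\xi_n^2\big)^m\big]<\infty$, e.g.\ by Minkowski's inequality in $L^m(\Omega)$ together with $\sum_n\lambda_n<\infty$. Consequently $\E\big[\|B_t-B_s\|_\cK^{2m}\big]=|t-s|^{2mH}\,\E\big[\|G\|_\cK^{2m}\big]\le C_m|t-s|^{2mH}$. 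Given $\varepsilon\in(0,H)$, choosing $m>\frac1{2\varepsilon}$ makes $2mH-1>2m(H-\varepsilon)$, so Kolmogorov's continuity theorem, which holds verbatim for Banach-space-valued processes, produces a modification of $B$ with sample paths in $\C^{H-\varepsilon}([0,T],\cK)$. Applying this along a sequence $\varepsilon_k\downarrow0$ and identifying each modification with the continuous version supplied by the first part gives $B\in\C^{H-}([0,T],\cK)$ almost surely.

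The only genuine obstacle is that $(\lambda_n)\in\ell^1$ does \emph{not} imply $\sum_n\sqrt{\lambda_n}<\infty$, so one cannot simply dominate $\|S_N-S_M\|_E\le\sum_{n>M}\sqrt{\lambda_n}\sup_t|\beta^n_t|$ and conclude by a Borel--Cantelli argument. The resolution is to pass to the \emph{squared} norm, where the cross terms vanish by orthonormality and the summable weights $\lambda_n$ (rather than $\sqrt{\lambda_n}$) appear; this is exactly what makes the $L^2(\Omega;E)$ computation close, and it is also the reason the almost-sure statement must be routed through a symmetrisation inequality rather than through absolute summability of the summands.
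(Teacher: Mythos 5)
Your proof is correct and follows essentially the same route as the paper: Kolmogorov's continuity theorem (in $\cK$) for the $\C^{H-}$ regularity, and the orthonormality identity $\|S_N(t)-S_M(t)\|_\cK^2=\sum_{n=M+1}^N\lambda_n(\beta^n_t)^2$ combined with $\E\bigl[\sup_t(\beta^1_t)^2\bigr]<\infty$ to make the tail of the series small in $L^2\bigl(\Omega;\C([0,T],\cK)\bigr)$. The only difference is that you explicitly upgrade the $L^2$-Cauchy property to almost-sure convergence via L\'evy's maximal inequality (equivalently It\^o--Nisio), a step the paper leaves implicit; this is a worthwhile piece of bookkeeping but not a different argument.
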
 
The  first statement is a straightforward  application of the Kolmogorov continuity theorem, while the second follows from the convergence of
$ \sup_{t\le T} \sum_{k=n}^m\sqrt{\lambda_k}\beta_t^n e_k$ for any $T>0$ in $L^2$. This in turn is due to the fact that $\big\|\|\sum_{n=1}^\infty \sqrt{\lambda_n}\beta^n e_n\|_\infty\big\|_{L^2} 
\le  \Expec{|\beta^1|_\infty^2}\sum_n\lambda_n<\infty$.

\subsection{Basic Properties of Integrals Against Trace-Class fBm}
Our analysis in the sequel is underpinned by the following Mandelbrot-van Ness representation of the one-dimensional fBm with Hurst parameter $H\in(0,1)$ \cite{Mandelbrot1968}:
\begin{equation}\label{eq:mandelbrot}
  \beta_t=\alpha_H\left(\int_{-\infty}^0 (t-u)^{H-\frac12}-(-u)^{H-\frac12}\,dW_u-\int_0^t(t-u)^{H-\frac12}\,dW_u\right).
\end{equation}
Here $\alpha_H>0$ is an explicitly known normalization constant. By virtue of \eqref{eq:mandelbrot}, we have the following \emph{locally independent} decomposition of the one-dimensional fBm increment \cite{Hairer2005,Li2020}. For $t,h\geq 0$:
\begin{equation}\label{eq:increment_decomposition}
  \beta_{t+h}-\beta_t=\alpha_H\int_{-\infty}^t (t+h-u)^{H-\frac12}-(t-u)^{H-\frac12}\,dW_u+\alpha_H\int_t^{t+h}(t+h-u)^{H-\frac12}\,dW_u\define\bar\beta_h^t+\tilde\beta_h^t.
\end{equation}
If $t=0$, we shall write $\bar\beta_h\define \bar\beta_h^0$ and similarly $\tilde\beta_h\define\tilde \beta_h^0$. 
For $h>0$, there is no singularity in the integrand of the first term, whence an integration by parts formula turns it into a Riemann integral. This is exploited in the next lemma:
\begin{lemma}\label{lem:one_dim_smooth_part}
  For each $t\geq 0$, the mapping $(0,\infty)\ni h\mapsto\bar{\beta}_h^t$ is smooth with probability one and we have
  \begin{align}
  \dot{\bar\beta}^t_h\define\frac{d}{dh}\bar\beta_h^t&=\alpha_H\left(H-\frac12\right)\int_{-\infty}^t(t+h-u)^{H-\frac32}\,dW_u, \label{eq:first_derivative_smooth}\\
  \ddot{\bar\beta}^t_h\define\frac{d^2}{dh^2}\bar\beta_h^t&=\alpha_H\left(H-\frac12\right)\left(H-\frac32\right)\int_{-\infty}^t(t+h-u)^{H-\frac52}\,dW_u \label{eq:second_derivative_smooth}
\end{align}
for each $h>0$.
\end{lemma}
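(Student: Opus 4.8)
The plan is to realize the two candidate derivatives as genuine Wiener integrals and tie them to $\bar\beta^t$ by a stochastic Fubini argument, from which smoothness follows by a bootstrap. I would fix $t\geq 0$ and, for $n\geq 1$ and $h>0$, introduce
\[
  I_n(h)\define\int_{-\infty}^t (t+h-u)^{H-\frac12-n}\,dW_u .
\]
First I would check that this is a bona fide Wiener integral: the substitution $v=t+h-u$ gives $\int_{-\infty}^t(t+h-u)^{2H-1-2n}\,du=\int_h^\infty v^{2H-1-2n}\,dv<\infty$ for every $n\geq 1$, since $2H-1-2n<-1$ when $H<1$. Hence $I_n(h)$ is a centred Gaussian variable for each $h>0$. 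The same substitution yields, on any compact $[a,b]\subset(0,\infty)$, the bound $\|I_n(h)-I_n(h')\|_{L^2}^2\lesssim_{a,b,n}|h-h'|^2$; since all $L^p$-norms of a Gaussian are comparable, the Kolmogorov continuity theorem then furnishes a modification of $h\mapsto I_n(h)$ with continuous sample paths on $(0,\infty)$, which I would fix once and for all. Applying the same estimate to the increment $(t+h-u)^{H-\frac12}-(t+h'-u)^{H-\frac12}$ --- whose $(t-u)^{H-\frac12}$-parts cancel in $\bar\beta_h^t-\bar\beta_{h'}^t$ --- also shows that $h\mapsto\bar\beta_h^t$ is a.s. continuous on $(0,\infty)$.

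The heart of the argument is a family of integral identities. For $n\geq 1$ and $0<h_1<h_2$ I would establish, almost surely,
\[
  I_n(h_2)-I_n(h_1)=\Big(H-\frac12-n\Big)\int_{h_1}^{h_2}I_{n+1}(r)\,dr,\qquad \bar\beta_{h_2}^t-\bar\beta_{h_1}^t=\alpha_H\Big(H-\frac12\Big)\int_{h_1}^{h_2}I_{1}(r)\,dr .
\]
For the second one, the $(t-u)^{H-\frac12}$-terms cancel, so the Wiener integrand of $\bar\beta_{h_2}^t-\bar\beta_{h_1}^t$ equals $(t+h_2-u)^{H-\frac12}-(t+h_1-u)^{H-\frac12}=(H-\frac12)\int_{h_1}^{h_2}(t+r-u)^{H-\frac32}\,dr$ by the fundamental theorem of calculus (legitimate because $t+r-u\geq h_1>0$ for $u\leq t$), and one interchanges $\int\bigcdot\,dW_u$ with $\int\bigcdot\,dr$ by the stochastic Fubini theorem; the first identity is proved the same way. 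The hypothesis to verify for stochastic Fubini is $\int_{h_1}^{h_2}\|(t+r-\bigcdot)^{H-\frac12-(n+1)}\|_{L^2((-\infty,t))}\,dr<\infty$, which holds since $\|(t+r-\bigcdot)^{H-\frac12-(n+1)}\|_{L^2((-\infty,t))}^2=\int_r^\infty v^{2H-3-2n}\,dv$ is a finite, continuous function of $r$ on $[h_1,h_2]$. With the continuous modifications of the $I_n$ and the a.s. continuity of $\bar\beta^t$ in hand, these identities --- a priori holding off a null set depending on the pair $(h_1,h_2)$ --- upgrade to hold a.s. simultaneously for all $0<h_1<h_2$.

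From here the bootstrap is immediate. The second identity, read with $h_1=h_0$ fixed and $h_2=h$ variable, displays $h\mapsto\bar\beta_h^t$ as $\bar\beta_{h_0}^t+\alpha_H(H-\frac12)\int_{h_0}^{h}I_1(r)\,dr$, which is $C^1$ on $(0,\infty)$; hence $\bar\beta_{\bigcdot}^t\in C^1((0,\infty))$ a.s. with $\dot{\bar\beta}_h^t=\alpha_H(H-\frac12)I_1(h)$, which is \eqref{eq:first_derivative_smooth}. Feeding the first identity with $n=1$ into this gives $I_1\in C^1$ with $\frac{d}{dh}I_1(h)=(H-\frac32)I_2(h)$, so $\bar\beta_{\bigcdot}^t\in C^2$ with $\ddot{\bar\beta}_h^t=\alpha_H(H-\frac12)(H-\frac32)I_2(h)$, which is \eqref{eq:second_derivative_smooth}; iterating over $n$ yields $\bar\beta_{\bigcdot}^t\in C^\infty((0,\infty))$ with $\frac{d^n}{dh^n}\bar\beta_h^t=\alpha_H\prod_{j=0}^{n-1}(H-\frac12-j)\,I_n(h)$.

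I expect the only genuine obstacle to be the application of the stochastic Fubini theorem and the bookkeeping around it: checking its integrability hypothesis (harmless near $u=t$ exactly because $h$ stays bounded away from $0$, and square-integrable as $u\to-\infty$ precisely because $H<1$), and carefully upgrading the a.s. identities from "for each fixed $(h_1,h_2)$" to "for all $(h_1,h_2)$ simultaneously" via the continuous versions. An alternative route, hinted at below \eqref{eq:increment_decomposition}, is to integrate by parts in $u$ first --- rewriting $\bar\beta_h^t=\alpha_H h^{H-\frac12}W_t+\alpha_H(H-\frac12)\int_{-\infty}^t W_u\big[(t+h-u)^{H-\frac32}-(t-u)^{H-\frac32}\big]\,du$, an a.s. absolutely convergent Lebesgue integral --- and then differentiate under the ordinary integral sign; this trades the stochastic Fubini theorem for a (slightly delicate, since the $u$-derivative of the integrand is not in $L^2$ near $u=t$) limiting justification of the integration by parts, but needs the same two integrability checks.
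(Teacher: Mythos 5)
Your proof is correct, and it takes a genuinely different route from the paper's. The paper works pathwise: it integrates by parts in $u$ to rewrite $\bar\beta_h^t$ as an absolutely convergent Lebesgue integral of $(W_u-W_t)$ against the $u$-derivative of the kernel, checks that the boundary terms vanish using almost sure growth bounds on the Brownian path (of order $\sqrt{|u-t|}$ near $u=t$ and $|u|^{(1+H)/2}$ at $-\infty$), differentiates under the ordinary integral sign in $h$, and then integrates by parts back to recover a Wiener integral --- exactly the ``alternative route'' you sketch in your last paragraph, including the delicate point that the differentiated kernel is only in $L^2$ away from $u=t$ but is compensated by the modulus of continuity of $W$. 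Your argument instead stays at the level of Wiener integrals throughout: you verify square-integrability of the kernels $(t+h-u)^{H-\frac12-n}$, apply the (deterministic) fundamental theorem of calculus to the kernel increments, exchange $dW_u$ and $dr$ by stochastic Fubini, and upgrade the resulting a.s.\ identities to simultaneous ones via Kolmogorov-continuous modifications of the $I_n$, after which smoothness is a clean induction. What your approach buys is the avoidance of any pathwise estimate on $W$ and of the boundary-term bookkeeping, plus an explicit formula for all derivatives $\frac{d^n}{dh^n}\bar\beta_h^t$ in one stroke; what the paper's approach buys is that the two integrations by parts cancel the need for any Fubini-type interchange and keep the argument entirely within elementary real analysis once the a.s.\ bounds on $W$ are granted. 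Both correctly exploit the same underlying fact, namely that for $h$ bounded away from $0$ the kernels and all their $h$-derivatives are square-integrable on $(-\infty,t)$ because $H<1$.
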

\begin{proof}
  We only demonstrate the computation of the first derivative as the second one is similar. Fix $h>0$ and $t\geq 0$. Let us begin with an integration by parts:
  \begin{align}
    \alpha_H^{-1}\bar{\beta}_h^t&=\left((t+h-u)^{H-\frac12}-(t-u)^{H-\frac12}\right)(W_u-W_t)\Big|_{u=-\infty}^t \label{eq:boundary_term_derivative}\\
    &\phantom{=}+\left(H-\frac12\right)\int_{-\infty}^t \left((t+h-u)^{H-\frac32}-(t-u)^{H-\frac32}\right)(W_u-W_t)\,du. \nonumber
  \end{align}
  Note that there is an almost surely finite random variable $C>0$ such that
  \begin{equation*}
    |W_u-W_t|\leq \begin{cases}
    C\sqrt{|u-t|}, & u\in[t-1,t],\\
    C\big(|u|^{\frac{1+H}{2}}+1\big), & u\in(-\infty, t-1).
    \end{cases}
  \end{equation*}
  In particular, the boundary term \eqref{eq:boundary_term_derivative} vanishes. Moreover, we may differentiate under the integral sign to obtain
  \begin{align*}
    \alpha_H^{-1}\dot{\bar{\beta}}_h^t=\alpha_H^{-1}\frac{d}{dh}\bar{\beta}_h^t&=\left(H-\frac12\right)\left(H-\frac32\right)\int_{-\infty}^t(t+h-u)^{H-\frac52}(W_u-W_t)\,du \\
    &=-\left(H-\frac12\right)(t+h-u)^{H-\frac32}(W_u-W_t)\Big|_{u=-\infty}^t \\
    &\phantom{=}+\left(H-\frac12\right)\int_{-\infty}^t(t+h-u)^{H-\frac32}\,dW_u.
  \end{align*}
  Again, the boundary term vanishes and \eqref{eq:first_derivative_smooth} follows at once.
\end{proof}

The increment decomposition \eqref{eq:increment_decomposition} gives rise to a similar decomposition of the trace-class fBm \eqref{eq:trace_class_fbm}:
\begin{equation*}
  B_{t+h}-B_t=\sum_{n=1}^\infty\sqrt{\lambda_n}\bar\beta_h^{n,t} e_n + \sum_{n=1}^\infty\sqrt{\lambda_n}\tilde\beta_h^{n,t} e_n\define\bar B_h^t + \tilde B_h^t.
\end{equation*}
Once again, these series converge almost surely and in $L^p(\Omega;\cK)$ for any $p\geq 1$. 
\begin{lemma}\label{lem:derivative_smooth_part}
  Let $t\geq 0$. The mapping $h \in (0,\infty)\mapsto\bar{B}_h^t$ is almost surely smooth. Moreover, for each $p\geq 1$, we have that
  \begin{equation*}
    \|\dot{\bar B}^t_h\|_{L^p}\lesssim \frac{1}{h^{1-H}},\qquad \|\ddot{\bar B}^t_h\|_{L^p}\lesssim \frac{1}{h^{2-H}},
  \end{equation*}
  uniformly in $t\geq 0$ and $h\geq 0$.
\end{lemma}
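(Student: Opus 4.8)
The plan is to differentiate the series $\bar B_h^t=\sum_{n}\sqrt{\lambda_n}\bar\beta_h^{n,t}e_n$ term by term, relying on \cref{lem:one_dim_smooth_part} for the scalar pieces, and then to reduce the two $L^p(\Omega;\cK)$-estimates to a single Gaussian variance computation combined with Minkowski's inequality.

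First I would note that the integration-by-parts argument of \cref{lem:one_dim_smooth_part} iterates: for every $k\geq 1$, $t\geq 0$ and $h>0$,
\[
  \frac{d^k}{dh^k}\bar\beta_h^{n,t}=\alpha_H\prod_{j=0}^{k-1}\Big(H-\tfrac12-j\Big)\int_{-\infty}^t(t+h-u)^{H-\frac12-k}\,dW_u^n,
\]
which for $k=1,2$ is precisely \eqref{eq:first_derivative_smooth}--\eqref{eq:second_derivative_smooth}. By the It\^o isometry and the substitution $v=t+h-u$,
\[
  \var\!\left(\frac{d^k}{dh^k}\bar\beta_h^{n,t}\right)=\alpha_H^2\bigg(\prod_{j=0}^{k-1}\big(H-\tfrac12-j\big)\bigg)^2\int_h^\infty v^{2H-1-2k}\,dv=c_{H,k}\,h^{2H-2k},
\]
the improper integral converging because $2H-1-2k<-1$ for all $k\geq 1$ (as $H<1$). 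The key point is that this bound is uniform in $t\geq 0$ and in $n\in\N$; since $\frac{d^k}{dh^k}\bar\beta_h^{n,t}$ is centred Gaussian, all of its $L^p(\Omega)$-moments are then $\lesssim h^{H-k}$ with constant depending only on $p,H,k$.

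For the $L^p$-bounds I would first treat $p\geq 2$ (the range $p\in[1,2)$ following from $\|\cdot\|_{L^p(\Omega)}\leq\|\cdot\|_{L^2(\Omega)}$). Using that $\cK$ is a Hilbert space and that the $e_n$ are orthonormal, $\|\dot{\bar B}_h^t\|_{\cK}^2=\sum_n\lambda_n|\dot{\bar\beta}_h^{n,t}|^2$, so Minkowski's inequality in $L^{p/2}(\Omega)$ gives
\[
  \|\dot{\bar B}_h^t\|_{L^p}^2=\bigg\|\sum_n\lambda_n\big|\dot{\bar\beta}_h^{n,t}\big|^2\bigg\|_{L^{p/2}}\leq\sum_n\lambda_n\big\|\dot{\bar\beta}_h^{n,t}\big\|_{L^p}^2\lesssim h^{2H-2}\sum_n\lambda_n=h^{2H-2}\,\tr{Q},
\]
whence $\|\dot{\bar B}_h^t\|_{L^p}\lesssim h^{H-1}$ uniformly in $t,h$; the identical computation with $k=2$ yields $\|\ddot{\bar B}_h^t\|_{L^p}\lesssim h^{H-2}$.

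It remains to justify the smoothness of $h\mapsto\bar B_h^t$ and the legitimacy of the term-by-term differentiation. The variance estimate above shows that, for each fixed $h>0$ and each $k$, the partial sums of $\sum_n\sqrt{\lambda_n}\,\frac{d^k}{dh^k}\bar\beta_h^{n,t}\,e_n$ are Cauchy in $L^2(\Omega;\cK)$; to differentiate the series I need this convergence to be \emph{locally uniform} in $h$ on $(0,\infty)$ with probability one. I would obtain this by applying the Kolmogorov continuity theorem on a fixed compact $[a,b]\subset(0,\infty)$ to the $\cK$-valued tail processes $h\mapsto\sum_{n\geq N}\sqrt{\lambda_n}\,\frac{d^k}{dh^k}\bar\beta_h^{n,t}\,e_n$: the required H\"older-in-$h$ moment bound comes from writing an increment in $h$ as the integral of the $(k{+}1)$-st derivative and invoking the order-$(k{+}1)$ variance estimate, which is again summable against $(\lambda_n)$ uniformly on $[a,b]$ and tends to $0$ as $N\to\infty$. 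The classical theorem on differentiating locally uniformly convergent series of differentiable functions then shows $h\mapsto\bar B_h^t$ is $C^\infty$ on $(0,\infty)$ with derivatives given by the termwise-differentiated series, closing the loop with the estimates above. I expect this last point --- upgrading fixed-$h$, $L^2(\Omega)$-convergence of the derived series to almost sure local uniform convergence so that term-by-term differentiation is valid --- to be the only genuinely delicate step; the $L^p$-estimates themselves are a one-line Gaussian moment computation followed by Minkowski.
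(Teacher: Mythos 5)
Your proposal is correct and follows essentially the same route as the paper: term-by-term differentiation of the series $\sum_n\sqrt{\lambda_n}\bar\beta_h^{n,t}e_n$ justified by almost sure locally uniform convergence of the derived series, together with the It\^o-isometry variance computation $\expec{(\dot{\bar\beta}_h^t)^2}\lesssim h^{2H-2}$ and Gaussianity to upgrade to $L^p$. The only cosmetic differences are that the paper invokes its \cref{lem:uniform_convergence} for the locally uniform convergence where you use a Kolmogorov-continuity argument on the tails, and it appeals directly to the Gaussianity of the $\cK$-valued variable where you use Minkowski in $L^{p/2}$; both variants are fine.
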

\begin{proof}
  Fix $t\geq 0$. Let $h\ge 0$.  We claim that 
  \begin{equation*}
    \dot{\bar{B}}_h^t=\sum_{n=1}^\infty\sqrt{\lambda_n}\dot{\bar{\beta}}_h^{n,t} e_n.
  \end{equation*}
  To see this, for any $n\in \N$, let us abbreviate $\fS_h^N =\sum_{n=1}^N\sqrt{\lambda_n}\bar{\beta}_h^{n,t} e_n$. By \cref{lem:one_dim_smooth_part}, we know that $\frac{d}{dh}\fS_h^N=\sum_{n=1}^N \sqrt{\lambda_n}\dot{\beta}_h^{n,t} e_n$ for each $h>0$. Thanks to \cref{lem:uniform_convergence}, we see that the partial sums $\fS^N$ and $\frac{d}{dh}\fS_h^N$ converge $\prob$-a.s. locally uniform on $(0,\infty)$ to $\bar{B}^t$ and $\dot{\bar B}^t$, respectively. In particular, $h\mapsto\bar{B}^t_h$ is almost surely differentiable and $\frac{d}{dh}\bar{B}^t_h=\dot{\bar B}_h^t$ for each $h>0$.

  For the $L^p$-bound, we note $\dot{\bar{B}}_h^t$ is centered Gaussian with variance $\sum_n \lambda_n\expec{\big(\dot{\bar\beta}_h^t\big)^2}$. By \eqref{eq:first_derivative_smooth}, we find
  \begin{equation*}
    \expec{\big(\dot{\bar\beta}_h^t\big)^2}\lesssim\int_{-\infty}^0 (h-u)^{2H-3}\,du\lesssim h^{2H-2}
  \end{equation*}
  and the bound on $\|\dot{\bar B}^t_h\|_{L^p}$ follows by Gaussianity.
  The second derivative can be handled similarly using \eqref{eq:second_derivative_smooth}.
\end{proof}

Let $(\F_t^B)_{t\geq 0}$ be the filtration generated by the trace-class fBm.  Let $g:[s,t]\to L(\cK,\cH)$ be $\F_s$-measurable and write $g_n\define g e_n$ with the orthonormal basis in the series representation of $B$ \eqref{eq:trace_class_fbm}.  We  define a \emph{mixed Wiener-Young integral} against $B$ (which is a priori different than the Young integral):
\begin{align}
  \int_s^t g(r)\,\d B_r&\define\int_s^t g(r)\,\tilde{B}_{r-s}^s+\int_s^t g(r)\,d\bar{B}_{r-s}^s\nonumber \\
  &=\sum_{n=1}^\infty\sqrt{\lambda_n}\left(\int_s^t g_n(r)\,d \tilde \beta_{r-s}^{n,s}+\int_s^t g_n(r)\dot{\bar\beta}_{r-s}^{n,s}\,dr\right), \label{eq:mixed_integral}
\end{align}
where the first integral is the Wiener integral against the Gaussian process $(\tilde{\beta}_r^{n,s})_{r\geq 0}$.
Set
$$  \K_s(r,s)=\Expec{\tilde \beta_r^{1}\tilde \beta_s^{1}}.$$

\begin{lemma}\label{lem:wiener_integral}
The Wiener integral $\int_s^t g(r)\,d\tilde{B}_{r-s}^s$
  is a centered Gaussian process with covariance operator $Q_g$ given by
  \begin{equation*}
    \Braket{x,Q_gy}\define\sum_{n=1}^\infty\lambda_n\int_s^t\int_s^t \Braket{g_n(v),x}\Braket{g_n(u),y}\frac{\partial^2}{\partial u\partial v}\K(u-s,v-s)\,du\,dv,\qquad \forall\,x,y\in\cH.
  \end{equation*}
  \end{lemma}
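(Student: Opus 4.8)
The plan is to argue conditionally on $\F_s$: since $g$ is $\F_s$-measurable, conditioning on $\F_s$ renders $g$---and hence each $g_n\define g e_n:[s,t]\to\cH$---deterministic, while each $\tilde\beta^{n,s}_{\bigcdot}$ is built only from the increments of $W^n$ after time $s$, so that $(\tilde\beta^{n,s})_{n\in\N}$ is a family of independent copies of $\tilde\beta^1$ which is independent of $\F_s$. Denote by $S_N\define\sum_{n=1}^N\sqrt{\lambda_n}\int_s^t g_n(r)\,d\tilde\beta^{n,s}_{r-s}$ the partial sums of the series defining $\int_s^t g(r)\,d\tilde B^s_{r-s}$ in \eqref{eq:mixed_integral}. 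The goal is to show that each $S_N$ is, conditionally on $\F_s$, a centred Gaussian vector in $\cH$ with an explicit covariance, and then to pass to the limit $N\to\infty$.

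First I would analyse the scalar Wiener integral $\int_s^t\phi(r)\,d\tilde\beta^{1,s}_{r-s}$ for deterministic $\phi$. Taking $\phi=\sum_i c_i\mathbf 1_{[a_i,a_{i+1})}$ a step function, the integral equals $\sum_i c_i\big(\tilde\beta^{1,s}_{a_{i+1}-s}-\tilde\beta^{1,s}_{a_i-s}\big)$, and using $\Expec{\tilde\beta^{1,s}_{r-s}\tilde\beta^{1,s}_{u-s}}=\K(r-s,u-s)$---where $\K(a,b)\define\Expec{\tilde\beta^1_a\tilde\beta^1_b}$ is the covariance function appearing in the statement, and the identity holds because the increments of $W^1$ are stationary, so $\tilde\beta^{1,s}$ has the law of $\tilde\beta^1$---a direct computation gives
\begin{equation*}
  \Expec{\Big(\int_s^t\phi\,d\tilde\beta^{1,s}_{\bigcdot-s}\Big)\Big(\int_s^t\psi\,d\tilde\beta^{1,s}_{\bigcdot-s}\Big)}=\int_s^t\int_s^t\phi(v)\,\psi(u)\,\frac{\partial^2}{\partial u\,\partial v}\K(u-s,v-s)\,du\,dv
\end{equation*}
for step functions $\phi,\psi$. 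For $H>\tfrac12$ the density $\partial^2_{uv}\K(u-s,v-s)$ is locally integrable on $[s,t]^2$---its only singularity is along the diagonal, where it is of order $|u-v|^{2H-2}$---so the right-hand side is a bounded positive-semidefinite bilinear form, and the usual $L^2$-isometry and completion argument extends both the integral and the identity to all bounded measurable integrands, in particular to the (conditionally deterministic, bounded) maps $r\mapsto\Braket{g_n(r),x}$. This shows that, conditionally on $\F_s$, the vector $\int_s^t g_n(r)\,d\tilde\beta^{n,s}_{r-s}$ is centred Gaussian in $\cH$ with covariance operator $Q_{g_n}$ given by $\Braket{x,Q_{g_n}y}=\int_s^t\int_s^t\Braket{g_n(v),x}\Braket{g_n(u),y}\,\partial^2_{uv}\K(u-s,v-s)\,du\,dv$, together with a uniform bound $\trace(Q_{g_n})\lesssim\|g_n\|_\infty^2\lesssim\|g\|^2_{L^\infty([s,t];L(\cK,\cH))}$ (the implicit constant depending only on $H$ and $t-s$, via $\iint_{[s,t]^2}|u-v|^{2H-2}\,du\,dv<\infty$).

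Then, by independence across $n$, each $S_N$ is conditionally on $\F_s$ a centred Gaussian in $\cH$ with covariance $\sum_{n=1}^N\lambda_n Q_{g_n}$. Combining the uniform trace bound with $\sum_n\lambda_n<\infty$ (trace-class $Q$) yields $\Expec{\|S_N-S_M\|^2\mid\F_s}=\sum_{M<n\le N}\lambda_n\trace(Q_{g_n})\lesssim\|g\|^2_{L^\infty}\sum_{M<n\le N}\lambda_n$, so $(S_N)$ is Cauchy in $L^2(\Omega;\cH)$ and converges to $\int_s^t g(r)\,d\tilde B^s_{r-s}$, while simultaneously $\sum_{n=1}^N\lambda_n Q_{g_n}\to Q_g$ in trace norm. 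Passing to the limit in the conditional characteristic functions---$\Expec{e^{i\Braket{z,S_N}}\mid\F_s}=\exp\!\big(-\tfrac12\Braket{z,\sum_{n\le N}\lambda_n Q_{g_n}\,z}\big)\longrightarrow\exp\!\big(-\tfrac12\Braket{z,Q_g z}\big)$ for each $z\in\cH$, with convergence of $S_N$ to $S$ in probability identifying the left-hand limit with $\Expec{e^{i\Braket{z,S}}\mid\F_s}$---one concludes that $\int_s^t g(r)\,d\tilde B^s_{r-s}$ is, conditionally on $\F_s$, centred Gaussian in $\cH$ with covariance operator $Q_g$.

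The step I expect to be the main obstacle is the scalar one: rigorously constructing the Wiener integral against the Volterra--Gaussian process $\tilde\beta^{1,s}$ and, above all, checking that its covariance admits the second mixed derivative $\partial^2_{uv}\K$ as a locally integrable function on $[s,t]^2$, with integrability holding precisely in the range $H>\tfrac12$---which is exactly the regime in which the mixed integral \eqref{eq:mixed_integral} is defined in the first place. Everything after that (independence of the $\tilde\beta^{n,s}$ from $\F_s$, summing conditionally Gaussian vectors, and the $L^2$-completeness argument powered by $\sum_n\lambda_n<\infty$) is routine bookkeeping.
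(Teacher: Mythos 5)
Your proposal is correct and follows essentially the same route as the paper: the paper's proof simply invokes the series decomposition $\int_s^t g\,d\tilde B^s=\sum_n\sqrt{\lambda_n}\int_s^t g_n\,d\tilde\beta^{n,s}$ together with the It\^o isometry for Wiener integrals against a Gaussian process (cited from Huang--Cambanis), which is exactly the identity you derive by hand via step functions and $L^2$-completion. The extra work you do---conditioning on $\F_s$, checking local integrability of $\partial^2_{uv}\K$ for $H>\tfrac12$, and the trace-norm/characteristic-function passage to the limit in $n$---fills in details the paper leaves implicit, but does not change the argument.
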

  \begin{proof}
  This follows from 
  $  \int_s^t g(r)\,d  \tilde B_r = \sum_{n=1}^\infty\sqrt{\lambda_n}\int_s^t g_n(r)\,d \tilde \beta_{r-s}^{n,s}$
 and the It\^o isometry \cite{Huang1978},  
 \begin{align*}
  &  \Expec{\left(\int_s^t \Braket{g_n(r),x}\,d\tilde{\beta}_{r-s}^{n,s}\right)\left(\int_s^t \Braket{g_n(r),y}\,d\tilde{\beta}_{r-s}^{n,s}\right)}\\
  &=\int_s^t\int_s^t \braket{g_n(u),x}\braket{g_n(v),y}\frac{\partial^2}{\partial u\partial v}\K(u-s,v-s)\,du\,dv. \qedhere
  \end{align*}
\end{proof}
  
Let $|G|_{\C^{-\delta}}=\sup_{0\le s\le t\le T} |s-t|^{\kappa-1} \big\|\int_s^t g(r)dr\big\|_\cE$ denote the  H\"older norm of a function $G: [0,T]\to \cE$ of negative exponent $-\delta$ where $\cE$ is a Banach space.

\begin{lemma}\label{lem:lp_bound}
  Let $0\leq s\leq t\leq T$ and $\delta\in[0,1)$. Let $g\in\C^{-\delta}\big([s,t],L(\cK,\cH)\big)$ be $\F_s$-measurable. Then, for each $1\leq p<q$,
  \begin{equation*}
    \left\|\int_s^t g(r)\,\d B_r\right\|_{L^p}\lesssim\big\||g|_{\C^{-\delta}}\big\|_{L^q}|t-s|^{H-\delta}
  \end{equation*} 
  with a prefactor uniform in $s,t\in[0,T]$.
\end{lemma}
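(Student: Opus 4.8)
I would split the mixed integral according to its definition \eqref{eq:mixed_integral}: with $\tau\define t-s$ and $G(r)\define\int_s^r g(w)\,dw$, write $\int_s^t g(r)\,\d B_r = I^{(1)}+I^{(2)}$ where $I^{(1)}\define\int_s^t g(r)\,d\tilde B^s_{r-s}$ is the fresh Wiener part and $I^{(2)}\define\int_s^t g(r)\,\dot{\bar B}^s_{r-s}\,dr$ is the past‑dependent smooth part, and bound each by the right‑hand side. The only structure of $g$ that will be used is $G(s)=0$ together with $\|G(r)\|_{L(\cK,\cH)}\le|g|_{\C^{-\delta}}\,|r-s|^{1-\delta}$ pointwise in $\omega$ — the defining property of the negative H\"older norm — whence $\|G(r)\|_{L^q(\Omega)}\le\big\||g|_{\C^{-\delta}}\big\|_{L^q}\,|r-s|^{1-\delta}$.

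For $I^{(2)}$, I would use that $g$ is $\F_s$-measurable and that $h\mapsto\bar B^s_h$ is a.s.\ smooth on $(0,\infty)$ by \cref{lem:derivative_smooth_part}, so that an integration by parts gives $I^{(2)}=G(t)\,\dot{\bar B}^s_{t-s}-\int_s^t G(r)\,\ddot{\bar B}^s_{r-s}\,dr$, the boundary term at $r=s$ vanishing since $\|G(r)\,\dot{\bar B}^s_{r-s}\|_{L^1}\lesssim|r-s|^{1-\delta}|r-s|^{H-1}\to 0$ by the moment bounds of \cref{lem:derivative_smooth_part}. Choosing $\theta\in(1,\infty]$ with $\tfrac1p=\tfrac1q+\tfrac1\theta$ (possible because $p<q$), H\"older's inequality and the bounds $\|\dot{\bar B}^s_h\|_{L^\theta}\lesssim h^{H-1}$, $\|\ddot{\bar B}^s_h\|_{L^\theta}\lesssim h^{H-2}$ then yield $\|I^{(2)}\|_{L^p}\lesssim\big\||g|_{\C^{-\delta}}\big\|_{L^q}\big(\tau^{H-\delta}+\int_0^\tau h^{H-1-\delta}\,dh\big)\lesssim\big\||g|_{\C^{-\delta}}\big\|_{L^q}\,\tau^{H-\delta}$; the integral converges exactly when $\delta<H$, which is the only regime in which the claimed bound carries content.

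For $I^{(1)}$: conditionally on $\F_s$, the variable $I^{(1)}$ is — since $\tilde B^s$ is independent of $\F_s$ while $g$ is $\F_s$-measurable — a centred Gaussian vector in $\cH$ with covariance operator $Q_g$ as in \cref{lem:wiener_integral}, so by the equivalence of Gaussian moments $\|I^{(1)}\|_{L^p(\,\cdot\,\mid\F_s)}\lesssim_p(\trace Q_g)^{1/2}$ and hence $\|I^{(1)}\|_{L^p(\Omega)}\lesssim\big\|(\trace Q_g)^{1/2}\big\|_{L^p(\Omega)}$. Everything then comes down to the pathwise estimate $\trace Q_g\lesssim|g|_{\C^{-\delta}}^2\,\tau^{2(H-\delta)}$ (a.s.). To establish it I would invoke the locally independent decomposition \eqref{eq:increment_decomposition}: since $(\beta^n_{s+h}-\beta^n_s)_{h\ge 0}$ splits into the independent summands $\bar\beta^{n,s}_h+\tilde\beta^{n,s}_h$ and is itself a shifted fBm, adding the covariances of the two pieces reconstitutes the fBm covariance, which gives $\partial_u\partial_v\K(u-s,v-s)=H(2H-1)\,|u-v|^{2H-2}-\E\big[\dot{\bar\beta}^{1,s}_{u-s}\dot{\bar\beta}^{1,s}_{v-s}\big]$ for $u\ne v$, the subtracted kernel being positive semidefinite (by \eqref{eq:first_derivative_smooth} it is the Gram kernel $\propto\int_{-\infty}^s(u-w)^{H-3/2}(v-w)^{H-3/2}\,dw$). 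Inserting this into \cref{lem:wiener_integral} and discarding the non‑negative contribution of the subtracted kernel leaves $\trace Q_g\le H(2H-1)\,\cN(g)^2$ with $\cN(g)^2\define\sum_{n}\lambda_n\int_s^t\!\int_s^t\langle g_n(u),g_n(v)\rangle_\cH\,|u-v|^{2H-2}\,du\,dv$. The substitution $u\mapsto s+\tau u$ turns $\cN(g)^2$ into $\tau^{2H}\,\cN_{[0,1]}\!\big(g(s+\tau\,\cdot)\big)^2$ and $\big|g(s+\tau\,\cdot)\big|_{\C^{-\delta}([0,1])}=\tau^{-\delta}|g|_{\C^{-\delta}}$, so it remains to prove the unit‑interval bound $\cN_{[0,1]}(h)^2\lesssim|h|_{\C^{-\delta}([0,1])}^2$ — i.e. that $\C^{-\delta}([0,1])$ embeds into the fractional reproducing‑kernel Hilbert space. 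The latter follows (for $\delta\le H$) either from the Besov embedding $B^{-\delta}_{\infty,\infty}\hookrightarrow\dot H^{1/2-H}$, noting that $|u-v|^{2H-2}$ has Fourier symbol $\propto|\xi|^{1-2H}$ so $\cN_{[0,1]}(h)^2$ is a constant multiple of $\|h\|_{\dot H^{1/2-H}}^2$, or directly from a double integration by parts against $G$ together with $\|G(u)-G(v)\|\lesssim|g|_{\C^{-\delta}}|u-v|^{1-\delta}$ and the integrability of the diagonal singularity of $|u-v|^{2H-2}$.

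Assembling, $\big\|\int_s^t g(r)\,\d B_r\big\|_{L^p}\le\|I^{(1)}\|_{L^p}+\|I^{(2)}\|_{L^p}\lesssim\big\|(\trace Q_g)^{1/2}\big\|_{L^p}+\big\||g|_{\C^{-\delta}}\big\|_{L^q}\tau^{H-\delta}\lesssim\big\||g|_{\C^{-\delta}}\big\|_{L^q}\tau^{H-\delta}$, where the last step uses the pathwise bound on $\trace Q_g$ and $\|\,\cdot\,\|_{L^p}\le\|\,\cdot\,\|_{L^q}$, all constants being uniform in $s,t$. I expect the deterministic variance estimate $\trace Q_g\lesssim|g|_{\C^{-\delta}}^2\tau^{2(H-\delta)}$ to be the main obstacle: the power of $\tau$ is forced by the $H$-self‑similarity of the fractional kernel, but verifying that the \emph{negative} H\"older norm of $g$ — not its supremum norm — already controls the fractional RKHS norm, with the sharp exponent $1-\delta$ surviving across the integrable diagonal singularity, is the delicate point; everything else is bookkeeping with H\"older's inequality and the elementary bounds of \cref{lem:derivative_smooth_part} and the covariance identity of \cref{lem:wiener_integral}.
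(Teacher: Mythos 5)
Your decomposition into the rough Wiener part $I^{(1)}$ and the smooth past-dependent part $I^{(2)}$, the integration by parts against $\dot{\bar B}^s$ and $\ddot{\bar B}^s$ for $I^{(2)}$, and the reduction of $I^{(1)}$ to a conditional Gaussian second-moment computation are exactly the steps of the paper's proof; your treatment of $I^{(2)}$ is the paper's verbatim. The one genuine difference is the key variance estimate $\trace Q_g\lesssim|g|_{\C^{-\delta}}^2|t-s|^{2(H-\delta)}$: the paper obtains it by citing \cite[Lemma A.1]{Hairer2020}, whereas you attempt a self-contained derivation. Your first two steps there are correct and worth having: the identity $\partial_u\partial_v\K(u-s,v-s)=H(2H-1)|u-v|^{2H-2}-\E\big[\dot{\bar\beta}^{1,s}_{u-s}\dot{\bar\beta}^{1,s}_{v-s}\big]$ does follow from the independent splitting \eqref{eq:increment_decomposition}, the subtracted Gram kernel is positive semidefinite (also when tested against vector-valued functions), and the self-similarity reduction to the unit interval is clean.

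However, the remaining step --- the embedding $\cN_{[0,1]}(h)^2\lesssim|h|^2_{\C^{-\delta}([0,1])}$, which you rightly single out as the crux --- is both unproven and over-claimed. It does \emph{not} hold for all $\delta\leq H$; the correct range is $\delta\leq H-\frac12$. Indeed, for $h(u)=\cos(Nu)$ one has $|h|_{\C^{-\delta}([0,1])}\asymp N^{-\delta}$ while
\begin{equation*}
\cN_{[0,1]}(h)^2=\int_0^1\int_0^1\cos(Nu)\cos(Nv)\,|u-v|^{2H-2}\,du\,dv\asymp N^{1-2H},
\end{equation*}
so the ratio blows up whenever $\delta>H-\frac12$; the same example defeats the kernel $\partial_u\partial_v\K$ directly, since the subtracted $\bar\beta$-term only contributes $O(N^{-2H})$. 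Both of your suggested justifications in fact produce exactly this restriction: the Besov embedding $B^{-\delta}_{\infty,\infty}\hookrightarrow \dot{H}^{\frac12-H}$ requires $-\delta\geq\frac12-H$, and in the integration-by-parts route the integrand $\|G(u)-G(v)\|^2|u-v|^{2H-4}\lesssim|g|^2_{\C^{-\delta}}|u-v|^{2H-2\delta-2}$ is integrable across the diagonal only for $\delta<H-\frac12$. Since the paper only ever invokes the lemma with $\delta<H-\frac12$ (cf.\ \cref{uniform-Lp-bound,cor:lp_stability}), your argument can be repaired by restricting to that range and then actually carrying out one of the two justifications; as written, though, the central estimate is asserted on a range where it is false, and no complete proof of it is given.
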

\begin{proof}
  Let us first assume that $g\in\C^{-\delta}$ is smooth. We prove the asserted $L^p$-bound for both integrals in \eqref{eq:mixed_integral} individually. Let us begin with the smooth part: An integration by parts shows that
  \begin{equation*}
    \int_s^t g(r)\, d\bar{B}_{r-s}^s=\left(\int_s^t g(r)\,dr\right) \dot{\bar B}_{t-s}^s-\int_s^t\left(\int_s^r g(u)\,du\right)\ddot{\bar{B}}_{r-s}^{s} \,dr.
  \end{equation*}
  In particular, using \cref{lem:derivative_smooth_part} we obtain
  \begin{equation*}
    \left\|\int_s^t g(r)\,d\bar{B}_{r-s}^s\right\|_{L^p}\lesssim\big\||g|_{\C^{-\delta}}\big\|_{L^q} \left(|t-s|^{H-\delta}+\int_s^t |r-s|^{\delta+H-1}\,dr\right)\lesssim\big\||g|_{\C^{-\delta}}\big\|_{L^q} |t-s|^{H-\delta}.
  \end{equation*}
  For the rough part, we simply notice that the Wiener integral satisfies
  \begin{equation*}
    \int_s^t g(r)\,d\tilde{B}_{r-s}^s=\sum_{n=1}^\infty \sqrt{\lambda_n}\int_s^t g_n(r)\,d\tilde{\beta}_{r-s}^{n,s}.
  \end{equation*}
  Since  $(\tilde{B}_h^s)_{h\in(0,t-s]}$ is independent of the stochastic processes $\{g(r)\}_{r\in (s, t]}$, the integral is a Gaussian variable.  It suffices to estimate the $L^2$-norm. By \cref{lem:wiener_integral} we obtain
  \begin{equation*}
    \left\|\int_s^t g(r)\,d\tilde{B}_{r-s}^s\right\|_{L^2}^2=\sum_{n=1}^\infty\lambda_n\Expec{\int_s^t\int_s^t \Braket{e_n,g(v)g^*(u)e_n}\frac{\partial^2}{\partial u\partial v}\K(u-s,v-s)\,du\,dv}.
  \end{equation*}
  It follows from \cite[Lemma A.1]{Hairer2020}, c.f. also Lemma 3.4 there, 
  \begin{equation*}
    \int_s^t\int_s^t \Braket{e_n,g(v)g^*(u)e_n}\frac{\partial^2}{\partial u\partial v}\K(u-s,v-s)\,du\,dv\lesssim |t-s|^{2(H-\delta)}|g_ng_n^*|_{\C^{-\delta}}.
  \end{equation*}
  Notice that $|g^*_n|_{\C^{-\delta}}\leq|g|_{\C^{-\delta}}$, whence
  \begin{equation*}
    \left\|\int_s^t g(r)\,d\tilde{B}_{r-s}^s\right\|_{L^2}^2\lesssim|t-s|^{2(H-\delta)}\big\||g|_{\C^{-\delta}}\big\|_{L^2}^2\sum_{n=1}^\infty\lambda_n\lesssim|t-s|^{2(H-\delta)}\big\||g|_{\C^{-\delta}}\big\|_{L^2}^2,
  \end{equation*}
completing the proof.  
\end{proof}

\subsection{$L^p$-Estimates and Stochastic Stability}
Let $(B_t)_{t\geq 0}$ be a trace-class fBm with Hurst parameter $H>\frac12$, driving the SPDE \eqref{eq:spde},  and $(\F_t^B)_{t\geq 0}$ is its filtration. Let $\cE$ and $\cK$ be separable Hilbert spaces and let
$$g: \Omega\times[0,T]\times \cE\to L(\cK,\cE)$$
be a random field with values in the space of linear operators $L(\cK,\cE)$, independent of $\F_T^B$. Set $\F_t^g=\sigma\big(g(s,x),\,s\leq t,\,x\in\cE\big)$.
For $\gamma\in(0,1]$, $\delta\in[0,1)$ and $p\geq 1$ we define the `norm-like' quantity
\begin{equation}\label{eq:a_norm}
  \vertiii{g}_{\A_{-\delta,\gamma}^p}\define\sup_{s\leq T}\left(\sup_{X\in\F_s^B\vee\F_s^g}\big\||g(\bigcdot,X)|_{\C^{-\delta}([s,T])}\big\|_{L^p}+\sup_{X,Z\in\F_s^B\vee\F_s^g}\frac{\big\||g(\bigcdot,X)-g(\bigcdot,Z)|_{\C^{-\delta}([s,T])}\big\|_{L^p}}{\big\|\|X-Z\|_\cE^\gamma\big\|_{L^p}}\right).
\end{equation}
Since it  depends on the filtration generated by $g$, the triangle inequality may not hold.  In the sequel, we only use the triangle inequality coming from the
$\C^{-\delta}([s,T])$ norm.
Eventually, we  take the Hilbert space $\cE=\cH_{\kappa+\alpha}$, 
the random field $(t,x)\mapsto g\big(x,Y_{\frac{t}{\varepsilon}}\big)$, and $\F_t^g=\F_{\f t \epsilon}^Y$.

\medskip

Let $X$ be the solution to the SPDE \eqref{eq:slow_spde} interpreted in the pathwise mild sense of \cref{pathwise}. Our aim is to derive $L^p$-estimates on the integral
\begin{equation*}
  \int_0^{\bigcdot} g(t, X_t)\,dB_t.
\end{equation*}
We have the following consequence of \cref{prop:stochastic_mild_sewing} in which we usually choose $\kappa=-\alpha$.

Let $(\F_t)_{t\geq 0}$ be a filtration on a probability space $(\Omega, \F, \prob)$ with the usual assumptions. For $\alpha\in(0,1]$ and $p\geq 1$ we define
\begin{equation}\label{B-alpha-p}
  \hat\B_{\alpha,p}\define\left\{Z:[0,T]\times\Omega\to\cH:\,Z\text{ is }(\F_t)_{t\geq 0}\text{-adapted},\,\|Z\|_{\hat\B_{\alpha,p}}\define\sup_{s\neq t}\frac{\big\|\hat\delta Z_{s,t}\big\|_{L^p}}
 {|t-s|^{\alpha}}<\infty\right\}.
\end{equation}
As before, we may also use the longer notation $ \hat\B_{\alpha,p}\big([0,T],\cH\big)$. Elements of $ \hat\B_{\alpha,p}$ are $L^p$-bounded:
 one has the bound $\|Z_t\|_{L^p}\lesssim \|Z_s\|_{L^p}+|t-s|^\alpha \|Z\|_{\hat\B_{\alpha,p}}$. We furthermore have the following mild version of the Kolmogorov continuity theorem:

\begin{proposition}[Mild Kolomogorov criterion]\label{prop:mild_kolmogorov}
  Let $(Z_t)_{t\in[0,T]}$ be an $\cH$-valued stochastic process such that  for some $\alpha,p>0$,
  \begin{equation*}
    \big\|\hat\delta Z_{s,t}\big\|_{L^p}\lesssim|t-s|^\alpha\qquad\forall\,s,t\in[0,T].
  \end{equation*}
  Then, for any $\gamma<\alpha-\frac{1}{p}$, $Z\in\hat\C^{\gamma}\big([0,T],\cH\big)$ (up to modification) and $\big\|\|Z\|_{\hat\C^\gamma}\big\|_{L^p}<\infty$. In other words,
  \begin{equation*}
    \hat\B_{\alpha,p}\hookrightarrow L^p\Big(\Omega,\hat\C^{(\alpha-\frac1p)-}\Big).
  \end{equation*}
\end{proposition}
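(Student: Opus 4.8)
The statement is the mild analogue of the Kolmogorov--Chentsov continuity criterion, and the plan is to adapt its dyadic chaining proof, using the \emph{mild} increment $\hat\delta Z_{s,t}=Z_t-S_{t-s}Z_s$ in place of the ordinary one. The structural input that makes this work is that $\hat\delta$ is \emph{exactly} additive along the semigroup: for $r\le u\le t$,
\[
\hat\delta Z_{r,t}=\hat\delta Z_{u,t}+S_{t-u}\,\hat\delta Z_{r,u},
\]
which follows from $S_{t-u}S_{u-r}=S_{t-r}$; equivalently $\hat\delta(\hat\delta Z)\equiv 0$. Throughout I abbreviate $M\define\sup_{0\le t\le T}\|S_t\|<\infty$ and $\bD_n=\{kT/2^n:0\le k\le 2^n\}$.

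Step one is the estimate on the dyadic skeleton. Set $K_n\define\max_{0\le k<2^n}\big\|\hat\delta Z_{kT/2^n,(k+1)T/2^n}\big\|$; from $K_n^p\le\sum_k\big\|\hat\delta Z_{kT/2^n,(k+1)T/2^n}\big\|^p$ and the hypothesis one gets $\|K_n\|_{L^p}\lesssim 2^{-n(\alpha-1/p)}$. For any two dyadic rationals $s<t$ I would write $[s,t]$ as a finite union of consecutive dyadic intervals, at most two of each generation and all of generation $\ge\ell$ for some $\ell$ with $2^{-\ell}T\le t-s$; chaining with the additivity identity and bounding every factor $\|S_\bullet\|$ by $M$ gives $\|\hat\delta Z_{s,t}\|\le 2M\sum_{n\ge\ell}K_n$, hence, dividing by $|t-s|^\gamma\ge(2^{-\ell}T)^\gamma$,
\[
\frac{\|\hat\delta Z_{s,t}\|}{|t-s|^\gamma}\le 2MT^{-\gamma}\sum_{n\ge\ell}2^{n\gamma}K_n\le\mathcal K,\qquad \mathcal K\define 2MT^{-\gamma}\sum_{n\ge 0}2^{n\gamma}K_n .
\]
Then $\|\mathcal K\|_{L^p}\le 2MT^{-\gamma}\sum_n 2^{n\gamma}\|K_n\|_{L^p}\lesssim\sum_n 2^{-n(\alpha-1/p-\gamma)}<\infty$ precisely because $\gamma<\alpha-\tfrac1p$.

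The delicate step is upgrading this to a pathwise statement, and here the usual ``uniformly continuous on a dense set, hence a continuous extension'' reasoning is \emph{not} available: the embedding $\hat\C^\gamma\hookrightarrow\C$ uses only the \emph{pointwise} strong continuity of $(S_t)$, which is not uniform, so smallness of $\hat\delta Z_{s,t}$ does not bound $\|Z_t-Z_s\|$ uniformly in $Z_s$. I would instead construct the modification by hand. For $t\in[0,T]$ let $t^{(n)}$ be the largest point of $\bD_n$ with $t^{(n)}\le t$ and set $\tilde Z_t\define\lim_{n\to\infty}S_{t-t^{(n)}}Z_{t^{(n)}}$; the identity $S_{t-t^{(m)}}Z_{t^{(m)}}-S_{t-t^{(n)}}Z_{t^{(n)}}=S_{t-t^{(m)}}\hat\delta Z_{t^{(n)},t^{(m)}}$ for $n\le m$ together with the step-one bound shows this sequence is a.s. Cauchy, so $\tilde Z$ is well defined, while $Z_t-S_{t-t^{(n)}}Z_{t^{(n)}}=\hat\delta Z_{t^{(n)},t}$ and the hypothesis show $S_{t-t^{(n)}}Z_{t^{(n)}}\to Z_t$ in $L^p$, so $\tilde Z_t=Z_t$ a.s. for each $t$. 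Finally, for arbitrary $0\le s<t\le T$,
\[
\tilde Z_t-S_{t-s}\tilde Z_s=\lim_{n\to\infty}\big(S_{t-t^{(n)}}Z_{t^{(n)}}-S_{t-s^{(n)}}Z_{s^{(n)}}\big)=\lim_{n\to\infty}S_{t-t^{(n)}}\,\hat\delta Z_{s^{(n)},t^{(n)}},
\]
and passing the step-one bound to the limit yields $\|\tilde Z_t-S_{t-s}\tilde Z_s\|\le M\mathcal K|t-s|^\gamma$ simultaneously for all $s<t$, i.e.\ $\tilde Z\in\hat\C^\gamma$ a.s.\ with $\big\|\|\tilde Z\|_{\hat\C^\gamma}\big\|_{L^p}\le M\|\mathcal K\|_{L^p}<\infty$, which is the claim. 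The main obstacle is thus not the chaining estimate, which is routine once the additivity of $\hat\delta$ is in hand, but this last bookkeeping, where one must scrupulously avoid invoking any uniform continuity of the semigroup.
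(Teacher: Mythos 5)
Your proof is correct, and its skeleton --- dyadic chaining driven by the exact cocycle identity $\hat\delta Z_{r,t}=\hat\delta Z_{u,t}+S_{t-u}\hat\delta Z_{r,u}$ --- is the same as the paper's. You deviate in two places, both to your advantage. First, the paper gets the almost-sure dyadic bound by Markov plus Borel--Cantelli (producing an unquantified random constant $C$) and then has to invoke the Garsia--Rodemich--Rumsey inequality, applied to $r\mapsto S_{t-r}Z_r$, in a separate second pass to recover $\big\|\|Z\|_{\hat\C^\gamma}\big\|_{L^p}<\infty$; you instead keep the single random variable $\mathcal K=2MT^{-\gamma}\sum_n 2^{n\gamma}K_n$ in $L^p$ throughout, so the pathwise H\"older bound and its $p$-th moment come out of one computation and GRR is not needed. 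Second, the paper dismisses the passage from the dyadics to all of $[0,T]$ as ``standard,'' whereas you correctly observe that the usual uniform-continuity extension argument is unavailable here (strong continuity of $(S_t)$ is not uniform, so control of $\hat\delta Z_{s,t}$ does not control $\|Z_t-Z_s\|$ uniformly in $Z_s$), and you supply the missing construction $\tilde Z_t=\lim_n S_{t-t^{(n)}}Z_{t^{(n)}}$ together with the $L^p$-identification $\tilde Z_t=Z_t$ a.s. This is a genuine gap-filling improvement on the write-up in the paper. Two cosmetic caveats: your use of Minkowski's inequality for $\|\mathcal K\|_{L^p}$ implicitly assumes $p\geq 1$ (for $p\in(0,1)$ one would use the quasi-norm inequality $\|\sum X_n\|_{L^p}^p\leq\sum\|X_n\|_{L^p}^p$, which still converges), and the ``at most two intervals per generation'' dyadic decomposition should be stated with the usual tolerance of a fixed constant number per generation; neither affects the conclusion.
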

\begin{proof}
  The argument closely resembles the classical proof of the Kolmogorov criterion. We shall include it for the reader's convenience.

  For each $n\in\N$ let $\bD_n\define\big\{\frac{kT}{2^n}:\,k=0,\dots,2^n\}$ be the dyadic rationals of order $n$ in $[0,T]$. We also set $\bD\define\bigcup_{n\in\N}\bD_n$. Fix $\gamma<\alpha-\frac1p$. By Markov's inequality, we have
  \begin{equation*}
    \prob\Big(\max_{k=1,\dots,2^n}\big\|\hat\delta Z_{\frac{(k-1)T}{2^n},\frac{kT}{2^n}}\big\|\geq T^\gamma 2^{-\gamma n}\Big)\leq 2^{p\gamma n}\sum_{k=1}^{2^n}\Expec{\big\|\hat\delta Z_{\frac{(k-1)T}{2^n},\frac{kT}{2^n}}\big\|^p}\lesssim T^{(\alpha-\gamma) p} 2^{n(p\gamma-p\alpha+1)}.
  \end{equation*}
  The right-hand side is summable in $n\in\N$, whence 
  \begin{equation}\label{eq:borel_cantelli}
    \max_{k=0,\dots,2^n}\big\|\hat\delta Z_{\frac{(k-1)T}{2^n},\frac{kT}{2^n}}\big\|\leq C \left(\frac{T}{2^{n}}\right)^\gamma\qquad\forall\,n\in\N
  \end{equation}
  for an almost surely finite random variable $C>0$ by a Borel-Cantelli argument.

  Let $s,t\in\bD$ with $s<t$. Then we can find an $n\in\N$ such that $|t-s|\in[2^{-(n+1)}T,2^{-n}T]$. Pick increasing sequences $(s_k)_{k\geq n}$ and $(t_k)_{k\geq n}$ converging to $s$ and $t$, respectively, such that
  \begin{itemize}
    \item $s_k,t_k\in\bD_k$ for each $k\geq n$,
    \item $t_n-s_n=2^{-n}T$, and
    \item $|s_{k+1}-s_k|,|t_{k+1}-t_k|\in\big\{0,2^{-(k+1)}T\big\}$ $\forall k\geq n$. 
  \end{itemize}
  By  the telescopic expansion of $Z_t$ using $Z_t=\lim_{k\to \infty} S_{t-t_k} Z_{t_k}$ and similarly for $Z_s$,  it follows from \eqref{eq:borel_cantelli} that 
  \begin{align*}
    \big\|\hat\delta Z_{s,t}\big\| &\leq\big\|S_{t-t_n}Z_{t_n}-S_{s-s_n}Z_{s_n}\big\| +\sum_{k=n}^\infty\big\|S_{s-s_{k+1}}\hat\delta Z_{s_k,s_{k+1}}\big\| +\sum_{k=n}^\infty\big\|S_{t-t_{k+1}}\hat\delta Z_{t_k,t_{k+1}}\big\| \\
    &\lesssim\big\|\hat\delta Z_{s_n,t_n}\big\| +\sum_{k=n}^\infty\big\|\hat\delta Z_{s_k,s_{k+1}}\big\| +\sum_{k=n}^\infty\big\|\hat\delta Z_{t_k,t_{k+1}}\big\| \\
    &\leq \frac{CT^\gamma}{2^{\gamma n}}+2CT^\gamma\sum_{k=n}^\infty \frac{1}{2^{\gamma (k+1)}}\leq 3C\left(\frac{T}{2^n}\right)^\gamma\leq 3\cdot 2^\gamma C|t-s|^\gamma.
  \end{align*}
  This gives the asserted H\"older continuity on the set $\bD$ and it is now standard to construct the desired modification of $(Z_t)_{t\in[0,T]}$.

  It remains to show that $\big\|\|Z\|_{\hat\C^\gamma}\big\|_{L^p}<\infty$. As usual, this follows from the Garsia-Rodemich-Rumsey inequality, c.f. Friz-Victoir  \cite[Theorem A.1]{Friz-Victoir}: 
   For each $\eta>\frac1p$, we have   \begin{equation*}
    \|f(t)-f(s)\| ^p\lesssim |t-s|^{\eta p-1}\int_s^t\int_s^t\frac{\|f(u)-f(v)\| ^p}{|v-u|^{\eta p+1}}\,du\,dv,
  \end{equation*}
  where the prefactor is independent of $f$. Choosing $\eta=\gamma+\frac1p$ and $f(r)=S_{t-r}Z_r$, we find
  \begin{align*}
    \Expec{\big\|Z\big\|_{\hat\C^\gamma}^p}&\lesssim\Expec{\sup_{s,t\in[0,T]}\int_s^t\int_s^t\frac{\big\|S_{t-u}Z_u-S_{t-v}Z_v\big\|^p}{|v-u|^{\gamma p+2}}}\\
    &\lesssim\int_0^T\int_0^v\frac{\Expec{\big\|\hat\delta Z_{u,v}\big\| ^p}}{|v-u|^{\gamma p+2}}\,du\,dv\lesssim\int_0^T\int_0^v |v-u|^{(\alpha-\gamma) p-2}\,du\,dv<\infty,
  \end{align*}
  as required.
\end{proof}

We also need the following technical result:
\begin{proposition}\label{prop:lp_bounds}
Let $p\ge 2$, $\delta\in(0,1)$, $\kappa\in \R$, $\gamma\in (0,1)$, and $\alpha\in \big(\f 12, 1\big)$. Let $\eta=H-\delta$ and $\bar\eta =\eta+\gamma\alpha$.
Let   $X\in  \hat{\B}_{\alpha,p}([0,T],\cH_{\kappa+\alpha})$ with $X_0\in L^p(\cH_{\kappa+\alpha})$. Suppose that 
 $$ 
  g: \Omega\times[0,T]\times\cH_{\kappa+\alpha}\to L(\cK,\cH_{\kappa+\alpha})
 $$ 
is independent of $\F_T^B$ and $\vertiii{g}_{\A^{p/\gamma}_{-\delta,\gamma}}<\infty$.  
 \begin{enumerate}
  \item  If $\eta>\f 12$ and $\bar \eta>1$, then the mixed Wiener-Young integral
  \begin{equation*}
    \int_s^t S_{t-r}g(r,X_r)\,\d B_r
  \end{equation*}
  defined in \eqref{eq:mixed_integral} is given by the $L^p$-limit of Riemann sums along an arbitrary sequence of partitions of $[s,t]$ with mesh tending to zero. 
\item Moreover, it satisfies the estimate
  \begin{equation}\label{eq:sewing_lp_bound}
    \left\|\int_s^t S_{t-r}g(r,X_r)\,\d B_r\right\|_{L^p({\cH_{\kappa+\alpha}})}\lesssim \vertiii{g}_{\A_{-\delta,\gamma}^{p/\gamma}(L(\cK,\cH_{\kappa+\alpha}))}\left(1+\|X\|_{\hat\B_{\alpha,p}}^\gamma\right)
   |t-s|^{\bar \eta -\alpha}.
  \end{equation}
 
 \item  If, in addition, $g\in\C^{(1-H)+,\gamma}$, then this integral coincides with the infinite-dimensional Young integral $\int_s^t S_{t-r}g(r,X_r)\,dB_r$, see \cref{prop:young}.
  \end{enumerate}
\end{proposition}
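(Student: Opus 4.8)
I would obtain all three parts as consequences of the mild stochastic sewing lemma (\cref{prop:stochastic_mild_sewing}), applied in the interpolation space $\cH_\kappa$ to the germ
\[
  \Xi_{s,t}\define S_{t-s}\int_s^t g(r,X_s)\,\d B_r ,
\]
where the inner integral is the mixed Wiener--Young integral of \eqref{eq:mixed_integral}. This is well defined and $\cH_{\kappa+\alpha}$-valued in $L^p$ because $X_s\in\F_s^B\vee\F_s^g$, $g$ is independent of $\F_T^B$, and \cref{lem:uniform_convergence} yields convergence of the defining series. Adaptedness of $\Xi$ is clear, and joint continuity follows from \cref{lem:lp_bound} and dominated convergence, so $\Xi\in\mathcal{S}^p(\cH_{\kappa+\alpha})\subset\mathcal{S}^p(\cH_\kappa)$. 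Throughout one exploits that $q\define p/\gamma>p$ (as $\gamma<1$), so that \cref{lem:lp_bound} combines with the first term in the definition of $\vertiii{g}_{\A^{p/\gamma}_{-\delta,\gamma}}$.

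The first task is to verify the two hypotheses of \cref{prop:stochastic_mild_sewing}. The bound $\Xi\in H^p_\eta(\cH_{\kappa+\alpha})$ follows from \cref{lem:lp_bound}, $\sup_{t\le T}\|S_t\|<\infty$, and $X_s\in\F_s^B\vee\F_s^g$, giving $\|\Xi_{s,t}\|_{L^p(\cH_{\kappa+\alpha})}\lesssim\big\||g(\bigcdot,X_s)|_{\C^{-\delta}([s,t])}\big\|_{L^{p/\gamma}}|t-s|^{H-\delta}\le\vertiii{g}_{\A^{p/\gamma}_{-\delta,\gamma}}|t-s|^\eta$, which is admissible since $\eta>\tfrac12$. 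Next, writing $S_{t-r}=S_{t-s}S_{s-r}$ and using additivity of the mixed integral, one finds $\hat\delta\Xi_{r,s,t}=S_{t-s}\Lambda_{r,s,t}$ with $\Lambda_{r,s,t}=\int_s^t\big(S_{s-r}\,g(v,X_r)-g(v,X_s)\big)\,\d B_v$. The crux of the proof is the estimate
\[
  \|\Lambda_{r,s,t}\|_{L^p(\cH_\kappa)}\lesssim\vertiii{g}_{\A^{p/\gamma}_{-\delta,\gamma}}\big(1+\|X\|_{\hat\B_{\alpha,p}}^\gamma\big)\,|t-r|\,|t-s|^{\bar\eta-1},
\]
i.e.\ condition \eqref{Lambda-bound} with $C=\vertiii{g}_{\A^{p/\gamma}_{-\delta,\gamma}}(1+\|X\|_{\hat\B_{\alpha,p}}^\gamma)$. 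To prove it I would decompose $S_{s-r}g(v,X_r)-g(v,X_s)$ into $(S_{s-r}-\id)g(v,X_s)$, $S_{s-r}\big(g(v,S_{s-r}X_r)-g(v,X_s)\big)$, and $S_{s-r}\big(g(v,X_r)-g(v,S_{s-r}X_r)\big)$. For the first term, \eqref{eq:holder_semigroup} gives $\|(S_{s-r}-\id)x\|_{\cH_\kappa}\lesssim|s-r|^\alpha\|x\|_{\cH_{\kappa+\alpha}}$, which with \cref{lem:lp_bound} contributes $\lesssim\vertiii{g}|s-r|^\alpha|t-s|^\eta$. For the second, $S_{s-r}X_r$ and $X_s$ are $\F_s^B\vee\F_s^g$-measurable with $S_{s-r}X_r-X_s=-\hat\delta X_{r,s}$, so the Lipschitz part of the $\A$-norm, \cref{lem:lp_bound}, and $X\in\hat\B_{\alpha,p}$ give $\lesssim\vertiii{g}\|X\|_{\hat\B_{\alpha,p}}^\gamma|s-r|^{\gamma\alpha}|t-s|^\eta$. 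In both cases the claimed bound follows from the elementary inequalities $a^\alpha b^\eta\lesssim(a+b)b^{\bar\eta-1}$ and $a^{\gamma\alpha}b^\eta\lesssim(a+b)b^{\bar\eta-1}$ on $[0,T]$ (weighted AM--GM; here the value $\bar\eta=\eta+\gamma\alpha$ is used). \textbf{The main obstacle is the third term}, in which the arguments of $g$ differ by the semigroup defect $(\id-S_{s-r})X_r$, which is only \emph{bounded}, not small, in $\cH_{\kappa+\alpha}$; handling it forces one to retain the prefactor $S_{s-r}$ and measure in the lower space $\cH_\kappa$, and in practice it is cleanest to work from the outset with the transported germ $\Xi_{s,t}=S_{t-s}\int_s^t g(r,S_{r-s}X_s)\,\d B_r$, for which the relevant argument difference becomes $S_{v-s}\hat\delta X_{r,s}$ and the term falls into the same favourable category as the second one.

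With both hypotheses in hand, part~(i) of \cref{prop:stochastic_mild_sewing} produces the $L^p$-limit $\I\Xi_{s,t}=\lim_{|P|\to0}\sum_{[u,v]\in P}S_{t-v}\Xi_{u,v}$ together with additivity; since these Riemann sums differ from $\sum_{[u,v]\in P}\int_u^v S_{t-r}g(r,X_r)\,\d B_r$ only by terms that vanish in $L^p$ (again controlled by the $H^p_\eta\cap\bar H^p_{\bar\eta}$ bounds), this identifies $\I\Xi$ with the mixed integral and proves part~1. For part~2 I would invoke part~(ii) of \cref{prop:stochastic_mild_sewing} with $\varpi=\alpha$ (admissible, as $\alpha<1<\bar\eta$): this gives $\|\I\Xi_{s,t}-\Xi_{s,t}\|_{L^p(\cH_{\kappa+\alpha})}\lesssim C|t-s|^{\bar\eta-\alpha}$, and combining with $\|\Xi_{s,t}\|_{L^p(\cH_{\kappa+\alpha})}\lesssim\vertiii{g}|t-s|^\eta$ and $\eta\ge\bar\eta-\alpha$ yields \eqref{eq:sewing_lp_bound}. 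Finally, for part~3 the extra hypothesis $g\in\C^{(1-H)+,\gamma}$ makes $r\mapsto g(r,X_r)$ lie a.s.\ in $\hat\C^{\beta}\big([s,t],L(\cK,\cH_{\kappa+\alpha})\big)$ for some $\beta>1-H$ (arguing as in the a priori bounds of \cref{pathwise}), so $\beta+H>1$ and the mild Young integral of \cref{cor:mild_young} is defined; one then checks it coincides with the mixed Wiener--Young integral because both are the (pathwise, resp.\ $L^p$) limit of the Riemann sums $\sum S_{t-v}g(v,X_v)(B_v-B_u)$ --- the smooth part $\bar B$ contributing a matching Riemann--Stieltjes integral, the rough part against $\tilde B$ matching the Young integral by the standard approximation for sufficiently regular integrands --- together with the uniqueness clause in part~(i) of \cref{prop:stochastic_mild_sewing}.
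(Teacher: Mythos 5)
Your proposal follows the paper's own route almost step for step: the same frozen germ $\Xi_{s,t}=S_{t-s}\int_s^t g(r,X_s)\,\d B_r$, the same $\Lambda_{r,s,t}=\int_s^t\big(S_{s-r}g(v,X_r)-g(v,X_s)\big)\,\d B_v$, a three-term decomposition that is a mirror image of the paper's, the error bound of \cref{prop:stochastic_mild_sewing} with $\varpi=\alpha$ combined with $\bar\eta-\alpha\leq\eta$ for part 2, and the uniqueness clause tested against the Young germ $S_{v-u}g(u,X_u)B_{u,v}$ for part 3. Two of your three terms are treated exactly as in the paper.

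The divergence is the term $g(v,X_r)-g(v,S_{s-r}X_r)$. You are right that $(\id-S_{s-r})X_r$ is merely bounded, not small, in $\cH_{\kappa+\alpha}$ when $X$ only lives in $\hat\B_{\alpha,p}([0,T],\cH_{\kappa+\alpha})$ --- this is indeed the delicate point, and the paper's own display for this term is loosely written. But the paper's resolution is not to change the germ: the defect is measured one rung down, $\|(\id-S_{s-r})X_r\|_{\cH_{\kappa}}\lesssim|s-r|^{\alpha}\|X_r\|_{\cH_{\kappa+\alpha}}$ by \eqref{eq:holder_semigroup}, and the Lipschitz control of $g$ is invoked at the level of $\cH_\kappa$ (as in the application, where $\kappa=-\alpha$ and \cref{cond:coefficients} imposes Lipschitz continuity with respect to both $\cH$ and $\cH_{-\alpha}$); the bound on $\Lambda$ is in any case only needed in $\cH_\kappa$. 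Your alternative --- switching to the transported germ $S_{t-s}\int_s^t g(r,S_{r-s}X_s)\,\d B_r$ --- has two gaps. First, $\vertiii{g}_{\A^{p/\gamma}_{-\delta,\gamma}}$ and \cref{lem:lp_bound} only control $|g(\bigcdot,X)|_{\C^{-\delta}}$ for a \emph{time-independent} $\F_s$-measurable argument $X$; with the moving argument $S_{\bigcdot-s}X_s$ neither part of \eqref{eq:a_norm} applies, so already the basic bound $\|\Xi_{s,t}\|_{L^p}\lesssim|t-s|^{\eta}$ is unjustified. Second, the sewing lemma constructs whatever the germ sews to; to conclude that the transported germ still yields $\int_s^t S_{t-r}g(r,X_r)\,\d B_r$ (equivalently, agrees with the frozen germ) you must show the two germs differ by $o(|v-u|)$ in $L^p$, and that comparison reintroduces exactly the defect $(\id-S_{w-u})X_u$ you set out to avoid. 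The clean repair is the two-norm Lipschitz route the paper implicitly takes, not a change of germ.
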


\begin{proof}
  Let 
  \begin{equation*}
    \Xi_{s,t}\define\int_s^t S_{t-s}g(r,X_s)\,\d B_r,\qquad 0\leq s\leq t\leq T.
  \end{equation*}
 We verify the requirements of \cref{prop:stochastic_mild_sewing}. By \cref{lem:lp_bound},
  \begin{equation*}
    \|\Xi_{s,t}\|_{L^p(\cH_{\kappa})}\lesssim \big\||S_{t-s}g(\bigcdot, X_s)|_{\C^{-\delta}([s,t],L(\cK,\cH_{\kappa}))}\big\|_{L^{p/\gamma}}|t-s|^{H-\delta}.
  \end{equation*}
  Since $\big\||S_{t-s}g(\bigcdot, X_s)|_{\C^{-\delta}([s,t],L(\cK,\cH_{\kappa}))}\big\|_{L^{p/\gamma}}\lesssim \vertiii{g}_{\A^{p/\gamma}_{-\delta,\gamma}(L(\cK,\cH_{\kappa}))}$ uniformly in $s,t\in[0,T]$, we conclude
  \begin{equation}\label{Xi-estimate}
    \|\Xi_{s,t}\|_{L^p(\cH_{\kappa})}\lesssim\vertiii{g}_{\A^{p/\gamma}_{-\delta,\gamma}(L(\cK,\cH_{\kappa}))}|t-s|^{\eta}.
  \end{equation}
  Let $\Lambda_{r,s,t}=\int_s^t \Big(S_{s-r}g(u,X_r)-g(u,X_s)\Big)\,\d B_u$. For the second order bound, we first notice that
  \begin{equation*}
    \hat\delta\Xi_{r,s,t}=-S_{t-s}\int_s^t \Big(g(u,X_s)-S_{s-r}g(u,X_r)\Big)\,\d B_u=S_{t-s}\Lambda_{r,s,t}.
  \end{equation*}
  \Cref{lem:lp_bound} yields
  \begin{equation*}
    \left\|\int_s^t \Big(g(u,X_s)-S_{s-r}g(u,X_r)\Big)\,\d B_u\right\|_{L^p(\cH_{\kappa})}\lesssim \Big\|\big|g(\bigcdot,X_s)-S_{s-r}g(\bigcdot,X_r)\big|_{\C^{-\delta}(L(\cK,\cH_{\kappa}))}\Big\|_{L^{p/\gamma}}|t-s|^{\eta}.
  \end{equation*}
  By the triangle inequality, we find
  \begin{align*}
    &\Big\|\big|g(\bigcdot,X_s)-S_{s-r}g(\bigcdot,X_r)\big|_{\C^{-\delta}(L(\cK,\cH_{\kappa}))}\Big\|_{L^{p/\gamma}}\\
    &\leq\Big\|\big|g(\bigcdot,X_s)-g(\bigcdot,S_{s-r}X_r)\big|_{\C^{-\delta}(L(\cK,\cH_{\kappa}))}\Big\|_{L^{p/\gamma}}
    +\Big\|\big|g(\bigcdot,S_{s-r}X_r)-g(\bigcdot,X_r)\big|_{\C^{-\delta}(L(\cK,\cH_{\kappa}))}\Big\|_{L^{p/\gamma}}\\
    &\phantom{\leq}+\Big\|\big|(\id-S_{s-r})g(\bigcdot,X_r)\big|_{\C^{-\delta}(L(\cK,\cH_{\kappa}))}\Big\|_{L^{p/\gamma}}.
      \end{align*}
For the first term, we have the bound
  \begin{align*}  \Big\|\big|g(\bigcdot,X_s)-g(\bigcdot,S_{s-r}X_r)\big|_{\C^{-\delta}(L(\cK,\cH_{\kappa}))}\Big\|_{L^{p/\gamma}} 
  &\leq  \vertiii{g}_{\A^{p/\gamma}_{-\delta,\gamma}(L(\cK,\cH_{\kappa+\alpha}))}\,
   \Big\| \|X_s-S_{s-r}X_r\|^\gamma_{\cH_{\kappa+\alpha}} \Big\|_{L^{p/\gamma}}
 \\
 &\lesssim \vertiii{g}_{\A^{p/\gamma}_{-\delta,\gamma}(L(\cK,\cH_{\kappa+\alpha}))}
  \|X\|^\gamma_{ \hat{\B}_{\alpha, p}([0,T], \cH_{\kappa+\alpha})}\;|s-r|^{\gamma\alpha}.
   \end{align*}
     Also,
 \begin{align*}  
 \Big\|\big|g(\bigcdot,S_{s-r}X_r)-g(\bigcdot,X_r)\big|_{\C^{-\delta}(L(\cK,\cH_{\kappa}))}\Big\|_{L^{p/\gamma}}
 &\leq  \vertiii{g}_{\A^{p/\gamma}_{-\delta,\gamma}(L(\cK,\cH_{\kappa+\alpha}))}
      \Big\| \|X_r-S_{s-r}X_r\|^\gamma_{\cH_{\kappa+\alpha}} \Big\|_{L^{p/\gamma}}\\
   &\lesssim    \vertiii{g}_{\A^{p/\gamma}_{-\delta,\gamma}(L(\cK,\cH_{\kappa+\alpha}))}
      \big\| \|X_r\|_{\cH_{\kappa}} \big\|_{L^p}^\gamma \;|s-r|^{\alpha}
   \end{align*}
The third term is bounded as below:
      \begin{align*}\Big\|\big|(\id-S_{s-r})g(\bigcdot,X_r)\big|_{\C^{-\kappa}(L(\cK,\cH_{\kappa}))}\Big\|_{L^{p/\gamma}}
 &  \leq |s-r|^\alpha \Big\|\big|g(\bigcdot,X_r)\big|_{\C^{-\kappa}(L(\cK,\cH_{\kappa+\alpha}))}\Big\|_{L^{p/\gamma}}\\
   &
   \leq    |s-r|^\alpha    \vertiii{g}_{\A^{p/\gamma}_{-\delta,\gamma}(L(\cK,\cH_{\kappa+\alpha}))}.
     \end{align*}
Our bounds show that
    \begin{align*}
    \big\|\Lambda_{r,s,t}\big\|_{L^p(\cH_\kappa)}
    \lesssim\vertiii{g}_{\A^{p/\gamma}_{-\delta,\gamma}}\left( \|X_r\|_{L^p(\cH_{\kappa+\alpha})}^\gamma+\|X\|_{\hat\B_{\alpha,p}(\cH_{\kappa})}^\gamma\right) |t-s|^\eta |s-r|^{\gamma\alpha}\\
       \lesssim\vertiii{g}_{\A^{p/\gamma}_{-\delta,\gamma}(L(\cK,\cH_{\kappa+\alpha}))}\left(1+\|X\|_{\hat\B_{\alpha,p}(\cH_{\kappa+\alpha})}^\gamma\right) |t-s|^{\bar\eta - 1} |t-r|.
  \end{align*}
 Here, we used that $\|X_r\|_{L^p(\cH_{\kappa+\alpha})}\lesssim 1+\|X\|_{\hat B_{\alpha,p}(\cH_{\kappa+\alpha})}$ and  $\gamma\alpha<1$ as well as $\eta>\bar \eta-1$. 
  In particular, we have also shown that
  \begin{equation*}
    \big\|\hat\delta\Xi_{r,s,t}\big\|_{L^p(\cH_{\kappa})}\lesssim\|g\|_{\A^{p/\gamma}_{-\delta,\gamma}}\left(1+\|X\|_{\hat\B_{\alpha,p}(\cH_{\kappa+\alpha})}^\gamma\right)|t-s|^\eta |s-r|^{\gamma\alpha},
  \end{equation*}
  whence $\Xi\in H_\eta^p(\cH_\kappa)\cap\bar H_{\bar\eta}^p(\cH_\kappa)$.   
The mild stochastic sewing lemma (\cref{prop:stochastic_mild_sewing}) applies. Hence, \eqref{Xi-estimate} yields
   \begin{align*}
  &  \left\|\int_s^t S_{t-r}g(r,X_r)\,\d B_r\right\|_{L^p({\cH_{\kappa+\alpha}})}\\
  &  \lesssim  \vertiii{g}_{\A^{p/\gamma}_{-\delta,\gamma}(L(\cK,\cH_{\kappa+\alpha}))}\left(1+\|X\|_{\hat\B_{\alpha,p}(\cH_{\kappa+\alpha})}^\gamma\right)|t-s|^{\bar \eta -\alpha}+
   \vertiii{g}_{\A^{p/\gamma}_{-\delta,\gamma}(L(\cK,\cH_{\kappa}))}|t-s|^{\eta}.
  \end{align*}
Since $\bar \eta-\alpha \le \eta$, the estimate \eqref{eq:sewing_lp_bound} follows.

 Let $g\in \C^{1-H+\epsilon, \gamma}$ for some $\epsilon>0$.  It remains to show that $\int_s^t S_{t-r}g(r,X_r)\,\d B_r$ coincides with the infinite-dimensional Young integral of \cref{prop:young}. But this is an easy consequence of the uniqueness statement of the mild stochastic sewing lemma and \cref{lem:lp_bound}:
  \begin{align*}
    \big\|\Xi_{u,v}-S_{v-u}g(u,X_u)B_{u,v}\big\|_{L^p(\cH_\kappa)}&\lesssim\left\|\int_u^v \Big(g(r,X_u)-g(u,X_u)\Big)\,dB_r\right\|_{L^p(\cH_\kappa)}\\
    &\lesssim\Big\|\sup_{r\in [u,v]}\big\|g(r,X_u)-g(u,X_u)\big\|_{L(\cK,\cH_\kappa)}\Big\|_{L^p} |v-u|^H\\
    &\lesssim |v-u|^{\rho}
  \end{align*}
  for some $\rho>1$.
\end{proof}

\begin{lemma}\label{lem:bound_drift}
  Let $X\in\hat\B_{\alpha,p}\big([0,T],\cH_{\kappa+\alpha}\big)$ and $ f: \Omega\times[0,T]\times\cH_{\kappa+\alpha}\to\cH_{\kappa+\alpha}$ is independent of  $\F_T^B$ with $\vertiii{f}_{\A_{-\delta,\gamma}^{p}}<\infty$, where $\kappa\in\R$, $\delta\in(0,1)$, and $\gamma(0,1]$. If $\delta<\gamma\alpha$, then
\begin{equation}\label{eq:lp_riemann}
    \left\|\int_s^t S_{t-r}f(r,X_r)\,dr\right\|_{L^p(\cH_{\kappa+\alpha})}\lesssim \vertiii{f}_{\A^{p/\gamma}_{-\delta,\gamma}(\cH_{\kappa+\alpha})}\Big(1+\|X\|_{\hat\B_{\alpha,p}(\cH)}^\gamma\Big) |t-s|^{1-\delta-(1-\gamma)\alpha}
  \end{equation}
  for each $0\leq s\leq t\leq T$.
\end{lemma}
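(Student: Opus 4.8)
The plan is to handle $\int_s^t S_{t-r}f(r,X_r)\,dr$ with the mild sewing lemma, mirroring the proof of \cref{prop:lp_bounds} with the mixed Wiener--Young integral against $B$ replaced by the Bochner integral against $dr$; since $dr$ is a deterministic integrator, the Hurst exponent $H$ is replaced throughout by $1$, and the role of \cref{lem:lp_bound} is played by the elementary estimate $\big\|\int_a^b f(\rho,x)\,d\rho\big\|_{\cH_{\kappa+\alpha}}\le|f(\bigcdot,x)|_{\C^{-\delta}(\cH_{\kappa+\alpha})}\,|b-a|^{1-\delta}$, valid for every fixed (in particular every $\F_u$-measurable random) $x$ straight from the definition of $|\bigcdot|_{\C^{-\delta}}$. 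Concretely, I fix the terminal time $t$ and set
\[
  \Xi_{u,v}\define S_{v-u}\int_u^v f(r,X_u)\,dr\in\cH_{\kappa+\alpha},\qquad 0\le u\le v\le t,
\]
an adapted, $L^p$-continuous two-parameter process whose sewing Riemann sums are $\sum_{[u,v]\in P}S_{t-v}\Xi_{u,v}=\sum_{[u,v]\in P}S_{t-u}\int_u^v f(r,X_u)\,dr$; their $|P|\to0$ limit is $\int_s^t S_{t-r}f(r,X_r)\,dr$ (literally so whenever $r\mapsto f(r,X_r)$ is pathwise Bochner integrable, e.g.\ continuous, as happens in our applications).

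First I would record the germ bound: using $\sup_{\tau\le t}\|S_\tau\|<\infty$ on $\cH_{\kappa+\alpha}$ (and on $\cH_\kappa$), the elementary estimate above, the splitting $f(\bigcdot,X_u)=(f(\bigcdot,X_u)-f(\bigcdot,0))+f(\bigcdot,0)$, the two defining terms of $\vertiii{f}_{\A^{p/\gamma}_{-\delta,\gamma}}$, and $\|X_u\|_{L^p(\cH_{\kappa+\alpha})}\lesssim1+\|X\|_{\hat\B_{\alpha,p}}$, one gets $\|\Xi_{u,v}\|_{L^p(\cH_{\kappa+\alpha})}\lesssim\vertiii{f}_{\A^{p/\gamma}_{-\delta,\gamma}}(1+\|X\|^\gamma_{\hat\B_{\alpha,p}})|v-u|^{1-\delta}$, and the same with $\cH_\kappa$ replacing $\cH_{\kappa+\alpha}$; in particular $\Xi\in H^p_{1-\delta}(\cH_\kappa)$. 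Next, a direct computation — using $S_{t-s}S_{s-r}=S_{t-r}$ and additivity of the indefinite $f$-integral — gives $\hat\delta\Xi_{r,s,t}=S_{t-s}\Lambda_{r,s,t}$ with $\Lambda_{r,s,t}=S_{s-r}\int_s^t f(u,X_r)\,du-\int_s^t f(u,X_s)\,du$.

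The main work, and the main obstacle, is the second-order bound on $\Lambda$, which I would carry out exactly as for the analogous term in the proof of \cref{prop:lp_bounds}: decompose $\Lambda_{r,s,t}$ by the triangle inequality for the $\C^{-\delta}$-norm, inserting $\int_s^t f(u,S_{s-r}X_r)\,du$ and its $S_{s-r}$-transported counterpart; the genuinely H\"older-in-$x$ piece is controlled by $\vertiii{f}_{\A^{p/\gamma}_{-\delta,\gamma}}\|\hat\delta X_{r,s}\|^\gamma_{L^p(\cH_{\kappa+\alpha})}\le\vertiii{f}_{\A^{p/\gamma}_{-\delta,\gamma}}\|X\|^\gamma_{\hat\B_{\alpha,p}}|s-r|^{\gamma\alpha}$ — crucially \emph{only the mild increment} $\hat\delta X_{r,s}$ of $X$ enters, the honest increment $X_s-X_r$ being \emph{not} $\mathcal O(|s-r|^\alpha)$ in $\cH_{\kappa+\alpha}$ — while the semigroup-difference pieces are controlled by $\|(\id-S_{s-r})y\|_{\cH_\kappa}\lesssim|s-r|^\alpha\|y\|_{\cH_{\kappa+\alpha}}$ of \eqref{eq:holder_semigroup}, applied to the terms $y=\int_s^t f(u,Z)\,du\in\cH_{\kappa+\alpha}$ with $Z\in\{X_r,\,S_{s-r}X_r\}$, which carry the factor $|t-s|^{1-\delta}$. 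With $\bar\eta\define1-\delta+\gamma\alpha$ and $C\define\vertiii{f}_{\A^{p/\gamma}_{-\delta,\gamma}}(1+\|X\|^\gamma_{\hat\B_{\alpha,p}})$ this gives
\[
  \|\Lambda_{r,s,t}\|_{L^p(\cH_\kappa)}\ \lesssim\ C\,|t-s|^{1-\delta}|s-r|^{\gamma\alpha}\ \le\ C\,|t-r|\,|t-s|^{\bar\eta-1},
\]
the last inequality because $|s-r|\le|t-r|$, $|t-s|\le|t-r|$ and $\gamma\alpha<1$; and $\bar\eta>1$ precisely by the hypothesis $\delta<\gamma\alpha$. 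The reason to pass through $\cH_\kappa$ here is exactly to be able to trade $(\id-S_{s-r})$ for a factor $|s-r|^\alpha$ against the $\cH_{\kappa+\alpha}$-regularity of $X$ and of $\int f\,du$; the resulting loss of a derivative is then recovered by the smoothing parameter of the sewing lemma.

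Finally I would apply the mild sewing lemma \cref{prop:mild_sewing} — whose short proof, like that of the underlying Gubinelli--Tindel lemma, is insensitive to the Hilbert/Banach distinction — in the Banach space $L^p(\Omega,\F_t;\cH_\kappa)$, on which $(S_\tau)$ is a bounded analytic semigroup with interpolation spaces $L^p(\Omega,\F_t;\cH_{\kappa+\varpi})$, taking germ-exponent $1-\delta\in(0,1]$, $\mu=\bar\eta>1$, and smoothing parameter $\varpi=\alpha$ (admissible because $\delta<\gamma\alpha\le1-(1-\gamma)\alpha$ since $\alpha\le1$, whence $\alpha<\bar\eta$). This yields both the existence of $\I\Xi_{s,t}=\int_s^t S_{t-r}f(r,X_r)\,dr$ as an $L^p$-limit and the bound $\|\I\Xi_{s,t}-\Xi_{s,t}\|_{L^p(\cH_{\kappa+\alpha})}\lesssim C\,|t-s|^{\bar\eta-\alpha}$; combining with the $\cH_{\kappa+\alpha}$-germ bound of the second step and the elementary facts $\bar\eta-\alpha=1-\delta-(1-\gamma)\alpha\le1-\delta$ and $|t-s|\le T$,
\[
  \Big\|\int_s^t S_{t-r}f(r,X_r)\,dr\Big\|_{L^p(\cH_{\kappa+\alpha})}\ \le\ \|\I\Xi_{s,t}-\Xi_{s,t}\|_{L^p(\cH_{\kappa+\alpha})}+\|\Xi_{s,t}\|_{L^p(\cH_{\kappa+\alpha})}\ \lesssim\ C\,|t-s|^{1-\delta-(1-\gamma)\alpha},
\]
which is the claim.
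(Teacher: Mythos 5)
Your proof is correct and follows essentially the same route as the paper's: the same germ $\Xi_{u,v}=S_{v-u}\int_u^v f(r,X_u)\,dr$, the same $\Lambda_{r,s,t}$, the same decomposition of $\Lambda$ borrowed from the proof of \cref{prop:lp_bounds} (which is precisely what the paper does, by explicit reference), and the same trade of the smoothing parameter $\varpi=\alpha$ against the $\cH_{\kappa+\alpha}$-regularity. The one deviation is the final black box: the paper invokes the mild \emph{stochastic} sewing lemma (\cref{prop:stochastic_mild_sewing}), whereas you apply the deterministic mild sewing lemma (\cref{prop:mild_sewing}) in the Banach space $L^p(\Omega;\cH_\kappa)$; since the bound on $\Lambda$ is a plain $L^p$ bound and no conditional-expectation cancellation is exploited, the two are interchangeable here, and your version is if anything marginally cleaner because it does not require the germ exponent $1-\delta$ to exceed $\tfrac12$ (a hypothesis of \cref{prop:stochastic_mild_sewing} that the paper's invocation implicitly imposes, i.e.\ $\delta<\tfrac12$, even though the lemma is stated for $\delta\in(0,1)$).
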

\begin{proof}
  To bound the left-hand side, we employ \cref{prop:stochastic_mild_sewing} with
  \begin{equation*}
    \Xi_{s,t}\define S_{t-s}\int_s^t f(r,X_s)\,dr.
  \end{equation*}
  Clearly, for any $\gamma>0$, we have
  \begin{equation*}
    \|\Xi_{s,t}\|_{L^p(\cH_{\kappa})}\lesssim\vertiii{f}_{\A^{p/\gamma}_{-\delta,\gamma}(L(\cK,\cH_{\kappa+\alpha}))}|t-s|^{1-\delta}.
  \end{equation*}
  Since
  \begin{equation*}
    \hat\delta\Xi_{r,s,t}=-S_{t-s}\int_s^t f(u,X_s)-S_{s-r}f(u,X_r)\,du,
  \end{equation*}
  we have $\Lambda_{r,s,t}=\int_s^t \big(f(u,X_s)-S_{s-r}f(u,X_r)\big)\,du$. The rest of the proof follows closely that for \cref{prop:lp_bounds}: By definition, we have
  \begin{equation*}
      \left\|\int_s^t \Big(f(u,X_s)-S_{s-r}f(u,X_r)\Big)\,du\right\|_{L^p(\cH_{\kappa})}\lesssim \Big\|\big|f(\bigcdot,X_s)-S_{s-r}f(\bigcdot,X_r)\big|_{\C^{-\delta}(\cH_{\kappa})}\Big\|_{L^{p}}|t-s|^{1-\delta}.
  \end{equation*}
This leads to the estimate
  \begin{equation*}
    \big\|\Lambda_{r,s,t}\big\|_{L^p(\cH_{\kappa})}\lesssim \vertiii{f}_{\A^{p}_{-\delta,\gamma}(\cH_{\kappa+\alpha})}\Big(1+\|X\|_{\hat\B_{\alpha,p}(\cH_{\kappa+\alpha})}^\gamma\Big)|t-s|^{{\gamma\alpha-\delta}}|t-r|,
  \end{equation*}
  The mild stochastic sewing lemma (\cref{prop:stochastic_mild_sewing}) gives 
  \begin{align*}
  &\phantom{\lesssim}  \left\|\int_s^t S_{t-r}f(r,x_r)\,dr\right\|_{L^p(\cH_{\kappa+\alpha})}\\
    &\lesssim 
 \vertiii{f}_{\A^{p}_{-\delta,\gamma}(\cH_{\kappa+\alpha})}|t-s|^{1-\delta}
    + \vertiii{f}_{\A^{p}_{-\delta,\gamma}(\cH_{\kappa+\alpha})}\Big(1+\|X\|_{\hat\B_{\alpha,p}(\cH_{\kappa+\alpha})}^\gamma\Big)|t-s|^{1-\delta +\alpha \gamma-\alpha}\\
   & \lesssim \vertiii{f}_{\A^{p}_{-\delta,\gamma}(\cH_{\kappa+\alpha})}\Big(1+\|X\|_{\hat\B_{\alpha,p}(\cH_{\kappa+\alpha})}
   \Big)|t-s|^{1-\delta +\alpha \gamma-\alpha},
  \end{align*}
  as required. 
\end{proof}
 
These estimates apply to the solution of the slow SPDE \eqref{eq:slow_spde}. We record this in the following proposition:
 \begin{proposition}\label{non-uniform-estimate}
 Let  $T>0$, $H>\frac12$, and  $p\ge 2$. 
Assume the conditions of \cref{prop:well_posedness}. Then, for every initial condition $X_0\in \cH$ and every $T>0$, there exists a solution to
$
  dX_t=\big(A+f(t,X_t)\big)\,dt+g(t,X_t)\,dB_t$
 in $\hat\B_{\alpha,p}\big([0,T],\cH\big)$.
\end{proposition}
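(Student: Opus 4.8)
The plan is to build the solution from the pathwise theory of \cref{prop:well_posedness} and then upgrade the deterministic a priori H\"older estimate to an $L^p$ statement, using only that a trace-class fBm has finite moments of every order as a random variable in a H\"older space.

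First I would fix exponents $\gamma\in\big(\tfrac12,H\big)$ with $\alpha<\gamma$ and $\alpha+\gamma>1$, together with some $\bar\gamma\in(\alpha,\gamma)$; such a choice exists because the conditions of \cref{prop:well_posedness}, applied to \eqref{eq:spde} with a trace-class fBm driver, force $\alpha\in(1-H,H)$. By \cref{lem:uniform_convergence}, $B\in\C^{\gamma}\big([0,T],\cK\big)$ with probability one, so for almost every $\omega$ the pathwise assertion of \cref{prop:well_posedness} applies with $\h=B(\omega)$ and produces a unique mild solution $X=X(\omega)\in\hat\C^{\gamma-}\big([0,T],\cH\big)$. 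Being the limit of the Picard iterates $\A^n\big(S_{\bigcdot}X_0\big)$, each of which depends only on $B$ restricted to $[0,\bigcdot]$, the process $X$ is adapted to $(\F_t^B)_{t\ge0}$. The a priori bound \eqref{eq:deterministic_apriori_bound} then supplies constants $C>0$ and $N\in\N$, independent of $X_0$ and of $\omega$, such that
\[
  \|X\|_{\hat\C^{\bar\gamma}([0,T],\cH)}\le C\big(1+\|X_0\|\big)\big(1+\|B\|_{\C^{\gamma}([0,T],\cK)}\big)^N\qquad\text{a.s.}
\]

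The next step is to see that the right-hand side lies in $L^p(\Omega)$, i.e. that $\|B\|_{\C^{\gamma}}$ has a finite moment of order $Np$. This is routine: $B_t-B_s$ is a centered Gaussian element of $\cK$ with $\E\|B_t-B_s\|_\cK^2=|t-s|^{2H}\tr{Q}$, and since all $L^q$-norms of a Gaussian vector are comparable we get $\E\|B_t-B_s\|_\cK^q\lesssim_q|t-s|^{Hq}$ for every $q\ge1$. Choosing $q>\tfrac1{H-\gamma}$ and invoking the Kolmogorov continuity theorem (or the Garsia--Rodemich--Rumsey estimate exactly as in the proof of \cref{prop:mild_kolmogorov}, with the semigroup replaced by the identity) yields $\E\big[\|B\|_{\C^{\gamma}([0,T],\cK)}^q\big]<\infty$; letting $q\to\infty$, $\|B\|_{\C^{\gamma}}\in L^m(\Omega)$ for all $m\ge1$. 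Combined with the previous display this gives $\E\big[\|X\|_{\hat\C^{\bar\gamma}}^p\big]<\infty$.

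To conclude, I would pass from pathwise H\"older regularity to membership in $\hat\B_{\alpha,p}$: from the pointwise bound $\|\hat\delta X_{s,t}\|\le\|X\|_{\hat\C^{\bar\gamma}}|t-s|^{\bar\gamma}$ one obtains, for $0\le s<t\le T$,
\[
  \|\hat\delta X_{s,t}\|_{L^p}\le|t-s|^{\bar\gamma}\,\big\|\,\|X\|_{\hat\C^{\bar\gamma}}\,\big\|_{L^p}\le T^{\bar\gamma-\alpha}\,\big\|\,\|X\|_{\hat\C^{\bar\gamma}}\,\big\|_{L^p}\,|t-s|^{\alpha},
\]
hence $\|X\|_{\hat\B_{\alpha,p}([0,T],\cH)}<\infty$; together with adaptedness and the elementary bound $\|X_t\|_{L^p}\le\|X_0\|+T^\alpha\|X\|_{\hat\B_{\alpha,p}}$ this is precisely $X\in\hat\B_{\alpha,p}\big([0,T],\cH\big)$. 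I do not expect a genuine obstacle: the only point worth a remark is that the polynomial degree $N$ in \eqref{eq:deterministic_apriori_bound} does no harm, precisely because a Gaussian---and hence $B$---has finite moments of every order. An alternative in the spirit of \cref{sec:lp_estimates} would bypass the pathwise a priori bound and run a fixed-point/a priori argument directly in $\hat\B_{\alpha,p}$, estimating the drift and noise integrals via \cref{lem:bound_drift} and \cref{prop:lp_bounds} after verifying that the deterministic $f,g$ satisfy $\vertiii{f}_{\A^{p/\gamma}_{-\delta,\gamma}},\vertiii{g}_{\A^{p/\gamma}_{-\delta,\gamma}}<\infty$ for a sufficiently small $\delta>0$; that is the route that will be needed for the $\varepsilon$-scaled equation \eqref{eq:slow_spde}, where the pathwise bound is not uniform in $\varepsilon$.
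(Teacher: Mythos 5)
Your argument is correct and is essentially the paper's own proof, which simply combines the deterministic a priori bound \eqref{eq:deterministic_apriori_bound} with the fact that $\|B\|_{\C^\gamma([0,T],\cK)}$ has moments of all orders (the paper cites Fernique's theorem where you derive this from Gaussian moment equivalence plus Kolmogorov/Garsia--Rodemich--Rumsey, an immaterial difference). The additional details you supply---adaptedness via the Picard iterates and the passage from $\hat\C^{\bar\gamma}$ to $\hat\B_{\alpha,p}$---are correct and fill in steps the paper leaves implicit.
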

This is an immediate consequence of the deterministic a priori bound \eqref{eq:deterministic_apriori_bound} and the fact that, for each $\gamma<H$, $\|B\|_{\C^\gamma([0,T],\cK)}$ has moments of all order by virtue of Fernique's theorem. It is a good place to note that
SPDEs driven by fBMs have been widely studied, from the perspective of  rough paths, white noise analysis, Caputo-type derivatives, Malliavin calculus, and random dynamical systems, see \cite{Jentzen-Kloeden, Garrido2010, Caruana-Friz-Oberhauser, Duncan-Maslowski-Pasik-Duncan,Nualart-Vuillermot,Hernndez-Kolokoltsov-Toniazzi, Chen-Kim-Kim, Holden-Oksendal-Uboe-Zhang, Hesse-Neamtu, Brzezniak-Neerven-Salopek}. But our aim is to obtain  uniform bounds in terms of the rather weak $\vertiii{\bigcdot}_{\A_{-\delta, \gamma}^p}$-norm of the coefficients. As an application, we prove the convergence of the slow motion $X^\varepsilon$ \eqref{eq:slow_spde} to the effective equation \eqref{eq:averaged_spde}.

\begin{proposition} \label{uniform-Lp-bound}
  Let $\gamma \in (0,1)$, $\delta\in \big(0,H-\frac12\big)$, and $\alpha<\frac{H-\delta}{2-\gamma}$ satisfying $H-\delta+\alpha\gamma>1$.     Let  $f:\Omega\times[0,T]\times\cH\to\cH$ and  $g:\Omega\times[0,T]\times\cH\to L(\cK,\cH)$ be independent of $\F^B_\cdot$ with
       $$\vertiii{f}_{\A_{-\delta, \gamma}^p(\cH)}<\infty, \qquad \vertiii{g}_{\A_{-\delta, \gamma}^{p/\gamma}(L(\cK,\cH))}<\infty.$$
 Then the mild solution to the equation
 $$dX_t=A X_t dt+ f(t,X_t)dt+g(t, X_t)dB_t$$
 satisfies the a priori bound 
 \begin{equation}\label{eq:a_priori_lp }
\|X\|_{\hat\B_{\alpha,p}(\cH)}
\lesssim  \Big(1+\|X\|_{\hat\B_{\alpha,p}(\cH)}^\gamma\Big)
  \Big(\vertiii{f}_{\A_{-\delta,\gamma}^{p}(\cH)}+\vertiii{g}_{\A_{-\delta,\gamma}^{p/\gamma}(L(\cK,\cH))}\Big) |t-s|^{{H-\delta-\alpha(2-\gamma)}}. 
  \end{equation}
\end{proposition}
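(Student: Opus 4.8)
The plan is to read the estimate off the two bounds already established, namely the stochastic integral bound of \cref{prop:lp_bounds} and the drift bound of \cref{lem:bound_drift}. First I would note that, under the present hypotheses, \cref{non-uniform-estimate} (which only needs the conditions of \cref{prop:well_posedness}) provides a solution $X\in\hat\B_{\alpha,p}\big([0,T],\cH\big)$; in particular $\|X\|_{\hat\B_{\alpha,p}(\cH)}<\infty$, so the right-hand side of the claimed inequality is meaningful. From the mild formulation $X_t=S_tX_0+\int_0^tS_{t-r}f(r,X_r)\,dr+\int_0^tS_{t-r}g(r,X_r)\,\d B_r$, the semigroup law $S_tX_0=S_{t-s}S_sX_0$, and the additivity of both integrals (for the stochastic one this is part (i) of \cref{prop:stochastic_mild_sewing}, combined with $S_{t-r}=S_{t-s}S_{s-r}$ for the drift), one obtains the decomposition of the mild increment
\begin{equation*}
\hat\delta X_{s,t}=\int_s^tS_{t-r}f(r,X_r)\,dr+\int_s^tS_{t-r}g(r,X_r)\,\d B_r,\qquad 0\le s\le t\le T.
\end{equation*}

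Next I would estimate the two terms with $\kappa=-\alpha$, so that $\cH_{\kappa+\alpha}=\cH$. For the stochastic integral I invoke \cref{prop:lp_bounds} with $\eta=H-\delta$ and $\bar\eta=\eta+\gamma\alpha$: the requirements $\eta>\tfrac12$ and $\bar\eta>1$ are precisely $\delta<H-\tfrac12$ and $H-\delta+\gamma\alpha>1$, which hold by hypothesis, and $\vertiii{g}_{\A^{p/\gamma}_{-\delta,\gamma}(L(\cK,\cH))}<\infty$ is assumed, so \eqref{eq:sewing_lp_bound} delivers a bound of order $\vertiii{g}_{\A^{p/\gamma}_{-\delta,\gamma}}\big(1+\|X\|_{\hat\B_{\alpha,p}}^\gamma\big)|t-s|^{\bar\eta-\alpha}$ with $\bar\eta-\alpha=H-\delta-(1-\gamma)\alpha$. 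For the drift I invoke \cref{lem:bound_drift}; its extra hypothesis $\delta<\gamma\alpha$ is automatic here since $H\le1$ forces $\gamma\alpha>1-H+\delta>\delta$, so \eqref{eq:lp_riemann} delivers a bound of order $\vertiii{f}_{\A^{p}_{-\delta,\gamma}}\big(1+\|X\|_{\hat\B_{\alpha,p}}^\gamma\big)|t-s|^{1-\delta-(1-\gamma)\alpha}$ (the remaining parameter ranges in both results being met as well).

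Finally I would assemble. Since $H\le1$, the drift exponent satisfies $1-\delta-(1-\gamma)\alpha\ge H-\delta-(1-\gamma)\alpha$, hence on $[0,T]$ (where $|t-s|\le T$) both contributions are $\lesssim|t-s|^{H-\delta-(1-\gamma)\alpha}$; dividing through by $|t-s|^\alpha$ and using $H-\delta-(1-\gamma)\alpha-\alpha=H-\delta-\alpha(2-\gamma)$, which is strictly positive by $\alpha<\frac{H-\delta}{2-\gamma}$, yields
\begin{equation*}
\frac{\|\hat\delta X_{s,t}\|_{L^p(\cH)}}{|t-s|^\alpha}\lesssim\Big(\vertiii{f}_{\A^{p}_{-\delta,\gamma}(\cH)}+\vertiii{g}_{\A^{p/\gamma}_{-\delta,\gamma}(L(\cK,\cH))}\Big)\big(1+\|X\|_{\hat\B_{\alpha,p}(\cH)}^\gamma\big)|t-s|^{H-\delta-\alpha(2-\gamma)},
\end{equation*}
which is the asserted bound (and, taking the supremum over $s<t$ within a subinterval $[0,\rho]$, its localised form with $|t-s|$ replaced by $\rho$). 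I do not expect a genuine obstacle here: the proposition is essentially a bookkeeping corollary of \cref{prop:lp_bounds} and \cref{lem:bound_drift}, the points requiring care being the verification that the parameter constraints of those two results are met under the present hypotheses — in particular $\delta<\gamma\alpha$ — and the observation that, after normalising the increment by $|t-s|^\alpha$, a strictly positive power of $|t-s|$ survives precisely because $\alpha(2-\gamma)<H-\delta$.
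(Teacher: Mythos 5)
Your proposal is correct and follows essentially the same route as the paper: decompose the mild increment $\hat\delta X_{s,t}$ into the drift and stochastic integrals, apply \cref{lem:bound_drift} and \cref{prop:lp_bounds} with $\kappa=-\alpha$, and compare exponents using $H\leq 1$. You in fact supply more detail than the paper's two-line argument, notably the check that $\delta<\gamma\alpha$ follows from $H-\delta+\alpha\gamma>1$, which the paper leaves implicit.
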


\begin{proof}
  We may apply the $L^p$-estimates obtained earlier in  \cref{prop:lp_bounds} and in \cref{lem:bound_drift} to the integrals in the identity
   $$\hat \delta X_{s,t}=\int_s^t S_{t-r} f(r,X_r)\,dr+\int_s^t S_{t-r}g(r,X_r)\,dB_r.$$
  In  \eqref{eq:sewing_lp_bound} and \eqref{eq:lp_riemann} take $\kappa=-\alpha$, then  
\begin{align*}
  \|\hat\delta X_{s,t}\|_{L^p}
  &\lesssim \Big(1+\|X\|_{\hat\B_{\alpha, p}(\cH)}^\gamma\Big)
  \Big(\vertiii{f}_{\A_{-\delta,\gamma}^{p}(\cH)}|t-s|^{1-\delta +\alpha \gamma-\alpha}+\vertiii{g}_{\A_{-\delta,\gamma}^{p}(L(\cK,\cH))}|t-s|^{H-\delta+\alpha\gamma-\alpha} \Big)  \end{align*}
   completing the proof. 
\end{proof}

\Cref{prop:lp_bounds,lem:bound_drift} yield the following probabilistic stability result for the SPDE \eqref{eq:spde}:
\begin{corollary}\label{cor:lp_stability}
 Let $H>\frac12$. Let $\delta\geq 0$, $\gamma<1$, and $\alpha>0$. Suppose that $$H-\delta>\frac12, \qquad \alpha<\frac{H-\delta}{2-\gamma}, \qquad H-\delta+\alpha\gamma>1.$$
   Let $f^n,f:\Omega\times[0,T]\times\cH\to\cH$ and $g^n,g:\Omega\times[0,T]\times\cH\to L(\cK,\cH)$ be independent of $\F^B_T$ and satisfy the conditions of \cref{prop:well_posedness}.
Then the SPDEs 
  \begin{align}
    dX_t^n&=\big(A+f^n(t,X_t^n)\big)\,dt+g^n(t,X_t^n)\,dB_t, \label{eq:xtn}\\
    dX_t&=\big(A+f(t,X_t)\big)\,dt+g(t,X_t)\,dB_t\label{eq:xt}
  \end{align}
  have a unique pathwise mild solution, see \eqref{def:mild_solution}. Moreover, if $X_0^n=X_0$ for each $n\in\N$ and 
  \begin{equation*}
    \bvertiii{f^n-f}_{\A_{-\delta,\gamma}^p(\cH)}\to 0,\qquad \bvertiii{g^n-g}_{\A_{-\delta,\gamma}^p(L(\cK,\cH))}\to 0,\qquad
  \end{equation*} then  $X^n\to\bar{X}$ in $\hat\C^{(\alpha-\frac1p)-}\big([0,T],\cH\big)$ in probability as $n\to\infty$.
\end{corollary}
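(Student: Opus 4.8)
The plan is to derive the statement from the uniform $L^p$-estimate of \cref{uniform-Lp-bound} together with the pathwise stability lemma \cref{lem:residue}. Existence and uniqueness of the pathwise mild solutions of \eqref{eq:xtn} and \eqref{eq:xt} follow from \cref{prop:well_posedness} applied path-by-path with $\h=B(\omega)$, using that $B\in\C^{H-}([0,T],\cK)$ almost surely by \cref{lem:uniform_convergence} and $H>\tfrac12$; then \cref{non-uniform-estimate} places every solution in $\hat\B_{\alpha,p}([0,T],\cH)$. The first genuine step is to upgrade this to a bound uniform in $n$: the assumed convergences, together with the triangle inequality for the $\C^{-\delta}$-seminorm entering $\vertiii{\bigcdot}_{\A_{-\delta,\gamma}}$, give $\sup_n\vertiii{f^n}_{\A_{-\delta,\gamma}^p}+\sup_n\vertiii{g^n}_{\A_{-\delta,\gamma}^{p/\gamma}}<\infty$; inserting this into \cref{uniform-Lp-bound} on a short interval --- whose length depends only on these uniform quantities, exactly as in the time-localisation argument inside the proof of \cref{prop:well_posedness} --- and iterating over $[0,T]$ produces $M:=\sup_n\|X^n\|_{\hat\B_{\alpha,p}([0,T],\cH)}<\infty$. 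Throughout, the solutions are regarded as adapted to the filtration generated by $B$ and by the coefficient fields, with respect to which both the stated convergences and the estimates of \cref{prop:lp_bounds,lem:bound_drift} remain in force.

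Next I would, for fixed $\omega$, present $X^n$ as a solution in the sense of \cref{lem:residue} of the equation with the \emph{limit} coefficients $f,g$ and the same driver $B$, but with perturbed forcing $Y^n_t:=X^n_t-\int_0^t S_{t-r}f(r,X^n_r)\,dr-\int_0^t S_{t-r}g(r,X^n_r)\,dB_r$; by the fixed-point identity for $X^n$ one also has $Y^n_t=S_tX_0+\int_0^t S_{t-r}(f^n-f)(r,X^n_r)\,dr+\int_0^t S_{t-r}(g^n-g)(r,X^n_r)\,dB_r$, while $X$ itself solves the same equation with forcing $S_\bigcdot X_0$. Since $Y^n_0=X_0=S_0X_0$, \cref{lem:residue} --- applied with Hölder exponent $\beta:=(\alpha-\tfrac1p)-$ and $B$-exponent slightly below $H$, both admissible under the standing hypotheses --- gives the pathwise estimate $\|X^n-X\|_{\hat\C^\beta([0,T],\cH)}\le C(\omega)\,\|Y^n-S_\bigcdot X_0\|_{\hat\C^\beta([0,T],\cH)}$, where $C(\omega)$ is almost surely finite and, by the remark following \cref{lem:residue}, bounded by $D\exp\left(D\|B\|_{\C^\gamma}^{1/\gamma}+D\|Y^n\|_{\hat\C^\beta}^{1/\beta}+D\|S_\bigcdot X_0\|_{\hat\C^\beta}^{1/\beta}\right)$ for a deterministic $D$.

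It remains to show $Y^n-S_\bigcdot X_0\to0$. Here I would apply \cref{lem:bound_drift} and \cref{prop:lp_bounds}, both with $\kappa=-\alpha$, to the two integrals in the expression for $Y^n-S_\bigcdot X_0$: the random fields $f^n-f$ and $g^n-g$ are independent of $\F_T^B$, and $X^n\in\hat\B_{\alpha,p}(\cH_{-\alpha})$ with $\|X^n\|_{\hat\B_{\alpha,p}}\le M$, so these lemmas yield $\|Y^n-S_\bigcdot X_0\|_{\hat\B_{\alpha,p}([0,T],\cH)}\lesssim(1+M^\gamma)\big(\vertiii{f^n-f}_{\A_{-\delta,\gamma}^p(\cH)}+\vertiii{g^n-g}_{\A_{-\delta,\gamma}^{p/\gamma}(L(\cK,\cH))}\big)$, the time-exponents $1-\delta-(1-\gamma)\alpha$ and $H-\delta+\gamma\alpha-\alpha$ produced by those lemmas being both $\ge\alpha$ precisely because $\alpha<\tfrac{H-\delta}{2-\gamma}$ (using also $H<1$ for the first). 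Hence $\|Y^n-S_\bigcdot X_0\|_{\hat\B_{\alpha,p}}\to0$, and the continuous embedding $\hat\B_{\alpha,p}\hookrightarrow L^p(\Omega,\hat\C^\beta)$ of \cref{prop:mild_kolmogorov} gives $\big\|\,\|Y^n-S_\bigcdot X_0\|_{\hat\C^\beta}\,\big\|_{L^p}\to0$; in particular $\|Y^n-S_\bigcdot X_0\|_{\hat\C^\beta}\to0$ in probability and $\{\|Y^n\|_{\hat\C^\beta}\}_n$ is tight, so $\{C(\omega)\}_n$ is tight. A tight sequence multiplied by a sequence tending to $0$ in probability tends to $0$ in probability, so $\|X^n-X\|_{\hat\C^\beta([0,T],\cH)}\to0$ in probability, which is the claim with $\beta=(\alpha-\tfrac1p)-$.

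The step I expect to be the real obstacle is the one just described. The perturbation $Y^n-S_\bigcdot X_0$ must be evaluated at the $B$-dependent process $X^n$, so to convert the \emph{weak} $\vertiii{\bigcdot}_{\A}$-convergence of the coefficients (as opposed to a strong supremum-norm convergence) into a quantitative bound one must first secure the $n$-uniform bound $M<\infty$ and run the sewing-based estimates of \cref{prop:lp_bounds,lem:bound_drift} on a filtration for which $X^n$ is adapted; moreover the nonlinear self-interaction term $g^n(\bigcdot,X^n_\bigcdot)-g^n(\bigcdot,X_\bigcdot)$ inside the stochastic integral cannot be estimated in $L^p$ at all and is absorbed only pathwise via \cref{lem:residue}, at the cost of the almost-surely-finite but not $L^p$-integrable prefactor $C(\omega)$ --- which is exactly why the convergence is in probability rather than in $L^p$.
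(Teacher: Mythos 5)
Your proposal is correct and follows essentially the same route as the paper's own proof: a uniform-in-$n$ bound on $\|X^n\|_{\hat\B_{\alpha,p}}$ from \cref{uniform-Lp-bound}, the decomposition of $X^n$ as a solution with the limit coefficients but perturbed forcing fed into the mild residue lemma \cref{lem:residue}, the $L^p$-estimate of the perturbation via \cref{prop:lp_bounds,lem:bound_drift}, and the mild Kolmogorov embedding to pass to $\hat\C^{(\alpha-\frac1p)-}$. Your explicit tightness argument for the random prefactor $C(\omega)$ merely spells out what the paper leaves implicit in its final sentence, and your use of $S_\bigcdot X_0$ in place of the paper's $X_0$ for the unperturbed forcing is the more careful reading of the mild formulation.
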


\begin{proof}
  First note that the equations \eqref{eq:xtn}--\eqref{eq:xt} are well-posed in $\hat\C^{H-}\big([0,T],\cH\big)$ by \cref{prop:well_posedness}. \Cref{non-uniform-estimate} shows that $X^n\in \hat \B_{\alpha, p}([0,T],\cH)$ and \cref{uniform-Lp-bound} furnishes the estimate
  \begin{equation*}
\|X^n\|_{\hat\B_{\alpha,p}(\cH)}
\lesssim  \Big(1+\|X^n\|_{\hat\B_{\alpha,p}(\cH)}^\gamma\Big)
  \Big(\bvertiii{f^n}_{\A_{-\delta,\gamma}^{p}(\cH)}+\bvertiii{g^n}_{\A_{-\delta,\gamma}^{p/\gamma}(L(\cK,\cH))}\Big) |t-s|^{H-\delta-(2-\gamma)\alpha}.
  \end{equation*}
Since the exponent $H-\delta-(2-\gamma)\alpha>0$ by assumption, we can iterate this bound to see that $\sup_{n}  \|X^n\|_{\hat\B_{\alpha,p}(\cH)}<\infty$. 
  Next, we apply \cref{lem:residue} with 
  $y_t^n=X_0$ and
  \begin{equation*}
    \qquad\bar{y}_t^n\define X_0+\int_0^t S_{t-r}\big(f^n(r,X_r^n)-f(r,X_r^n)\big)\,dr+\int_0^t S_{t-r}\big(g^n(r,X_r^n)-g(r,X_r^n)\big)\,dB_r,
  \end{equation*}
  so that
  \begin{align*}
    X_t^n&=\bar y_t^n+\int_0^tS_{t-r} f(r,X_r^n)\,dr+\int_0^t S_{t-r}g(r,X_r^n)\,dB_r,\\
    X_t&=y_t^n+\int_0^tS_{t-r} f(r,X_r)\,dr+\int_0^t S_{t-r}g(r,X_r)\,dB_r.
  \end{align*}
  \Cref{uniform-Lp-bound} shows that
  \begin{equation*}
    \big\|y^n-\bar{y}^n\big\|_{\hat\B_{\alpha, p}(\cH)}\lesssim\Big(1+\|X^n\|_{\hat\B_{\alpha,p}(\cH)}^\gamma\Big)\Big(\bvertiii{f^n-f}_{\A_{-\delta,\gamma}^{p}(\cH)}+\bvertiii{g^n-g}_{\A_{-\delta,\gamma}^{p/\gamma}(L(\cK,\cH))}\Big),
  \end{equation*}
  whence the claim follows by \eqref{eq:residue_stability} and the mild Kolmogorov continuity theorem (\cref{prop:mild_kolmogorov}).
\end{proof}

\section{Non-Stationary Averaging}\label{sec:non_stat_averaging}
\label{sec-ergodic}

Let $\mu,\nu\in\P(\X)$. The \emph{total variation distance} between $\mu$ and $\nu$ is defined by
\begin{equation*}
  \TV{\mu-\nu}\define\sup_{A\in\B(\X)}|\mu(A)-\nu(A)|=\min_{(X,Y)\in\mathscr{C}(\mu,\nu)}\prob(X\neq Y),
\end{equation*}
where $\mathscr{C}(\mu,\nu)\define\{\rho\in\P(\X\times\X):\,[\rho]_1=\mu,\,[\rho]_2=\nu\}$ is the set of \emph{couplings}.

\subsection{Quantitative Ergodic Theorem}

In order to prove \cref{thm:averaging_spde}, we present a strengthened version of \cite[Lemma 3.14]{Hairer2020}. The proof there strongly relies on Kolmogorov's continuity theorem.   To the best of our knowledge, such a result is not known for random fields indexed by elements of a general infinite-dimensional Hilbert space. Here we follow a different strategy, which also allows us to extend the result to non-stationary fast processes.

Let $Y$ be a  stochastic process on a complete, separable metric space $\Y$ and $\pi\in\P(\Y)$ be a probability measure. Let $f: \cH\times\Y\to \cH$. Let 
$$F_\varepsilon(t,x) = f(x,Y_{\frac{t}{\varepsilon}}) \qquad  \hbox{and} \qquad \bar{f}(x)\define\int_{\Y}f(x,y)\pi(dy).$$
We also set $\F_t^Y=\sigma(Y_s,\,s\leq t)$.

\begin{definition}\label{def-ergodic}
We say that the \emph{ergodic condition} with rate $a>0$ and exponent $\delta\in(0,1)$ holds for $f$  (w.r.t. $Y$ and $\pi$) if for any $\cH$-valued random variables $X,Y$,
\begin{align*}
  \sup_{s\leq T}\sup_{X,Z\in\F_s^B\vee\F^Y_{\frac{s}{\varepsilon}}}\frac{\big\|\big|F_\varepsilon(\bigcdot,X)-F_\varepsilon(\bigcdot,Z)-\big(\bar f(X)-\bar f(Z)\big)\big|_{\C^{-\delta}([s,T])}\big\|_{L^p}}{\|X-Z\|_{L^p}}&\lesssim \varepsilon^a, \\
  \sup_{s\leq T}\sup_{X\in\F_s^B\vee\F^Y_{\frac{s}{\varepsilon}}}\Big\|\big|F_\varepsilon(\bigcdot,X)-\bar f(X)\big|_{\C^{-\delta}([s,T])}\Big\|_{L^p}&\lesssim \varepsilon^a.
\end{align*}
The constants are allowed to depend on $f$, but \emph{not} on $\varepsilon>0$. In particular,
\begin{equation*}
  \bvertiii{F_\varepsilon-\bar{f}}_{\A_{-\delta,\gamma}^p}\lesssim\varepsilon^a
\end{equation*}
for the norm \eqref{eq:a_norm}.
\end{definition}

We let $\L(Y_{t}\,|\,Y_s)$ denote the conditional law of $Y_t$ given $Y_s$. Let $\bar{B}_R$ denote the closed ball in $\cH$ of radius $R$ centered at $0$. For simplicity, we formulate the next lemma only for Markovian fast processes. However, we shall show in \cref{sec:non_markov} that its conclusion also holds for some non-Markovian process~$Y$.

\begin{lemma}\label{lem:norm_convergence}
Let $(Y_t)_{t\geq 0}$ be a Markov process on $\CY$. Let $\pi$ be a probability measure on $\CY$. Assume that there is a number $\rho\in(0,1)$ such that, for all $0\leq s\leq t$,
  \begin{equation}\label{eq:tv_decay_assumption}
   \Expec{\TV{\L(Y_{t}\,|\,Y_s)-\pi}}\lesssim \f{ 1  }{(t-s)^{\rho}}.
  \end{equation}
  Let $f:\cH\times\Y\to\cH$ and  $g:\cH\times\Y\to L(\cK,\cH)$ be  bounded measurable functions for which  there is an $L>0$ such that for all 
  $x,z\in\cH$ and all $y\in\Y$:
  \begin{equation}\label{eq:assumptions_f}
      \big\|f(x,y)-f(z,y)\big\|_{\cH}\leq L\|x-z\|_\cH, \qquad
       \big\|g(x,y)-g(z,y)\big\|_{L(\cK,\cH)}\leq L\|x-z\|_\cH.
  \end{equation}
  Fix  $\delta\in(0,1)$ and $\gamma\in(0,1)$. Then, for every $p>\frac{1+\rho}{\kappa}\vee 2$, the ergodic condition with rate $\f \rho p$ and exponent $\delta$ holds for both $f$ and $g$.
\end{lemma}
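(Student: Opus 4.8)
Throughout, fix $\varepsilon\in(0,1)$ and $s\le T$, write $\sigma:=s/\varepsilon$ and $\mathscr F_t:=\F_s^B\vee\F_t^Y$ (so every $X,Z\in\F_s^B\vee\F_{s/\varepsilon}^Y$ is $\mathscr F_\sigma$-measurable). The plan is to reduce both estimates in \cref{def-ergodic} to one \emph{ergodic-average increment bound}: for a random $\cH$-valued field $\varphi$ on $\Y$ whose parameters are frozen at $\mathscr F_\sigma$-measurable random variables, with $\int_\Y\varphi\,d\pi=0$ a.s.\ and $\sup_y\|\varphi(y)\|\le M$ for an $\mathscr F_\sigma$-measurable $M\ge0$, one should have
\[
  \Big\|\int_a^b\varphi(Y_w)\,dw\Big\|_{L^p}\lesssim\|M\|_{L^p}\,|b-a|^{1-\rho/p}\qquad(\sigma\le a\le b),
\]
the constant depending only on $p,\rho$. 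Granting this, apply it once with $\varphi=f(X,\bigcdot)-\bar f(X)$, $M=2\|f\|_\infty$, and once with $\varphi=f(X,\bigcdot)-f(Z,\bigcdot)-\bar f(X)+\bar f(Z)$, $M=2L\|X-Z\|$ (so $\|M\|_{L^p}=2L\|X-Z\|_{L^p}$, using \eqref{eq:assumptions_f} and $\int_\Y(f(X,\cdot)-f(Z,\cdot))\,d\pi=\bar f(X)-\bar f(Z)$); the substitution $w=r/\varepsilon$ turns the bound into $\big\|\int_u^v\varphi(Y_{r/\varepsilon})\,dr\big\|_{L^p}\lesssim\|M\|_{L^p}\varepsilon^{\rho/p}|v-u|^{1-\rho/p}$. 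Since the integrand is bounded, the primitive $t\mapsto\int_s^t\varphi(Y_{r/\varepsilon})\,dr$ is a.s.\ Lipschitz, hence continuous, and the Garsia--Rodemich--Rumsey inequality (exactly as in the proof of \cref{prop:mild_kolmogorov}, with $\eta=(1-\delta)+\tfrac1p$) upgrades this to $\big\||\varphi(Y_{\bigcdot/\varepsilon})|_{\C^{-\delta}([s,T])}\big\|_{L^p}\lesssim\|M\|_{L^p}\varepsilon^{\rho/p}$, with a constant uniform in $\varepsilon$ and $s$. This is where the hypothesis $p>\tfrac{1+\rho}\delta$ enters: it is equivalent to $1-\tfrac\rho p-\tfrac1p>1-\delta$, which makes the GRR double integral $\int_s^T\!\int_s^T|v-u|^{\delta p-\rho-2}\,du\,dv$ finite. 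Dividing the second bound by $\|X-Z\|_{L^p}$ yields the two asserted estimates, with rate $\tfrac\rho p$; the statements for $g$ follow \emph{mutatis mutandis}, working in $L(\cK,\cH)$ in place of $\cH$.

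For the increment bound I re-run the mild stochastic sewing construction with trivial semigroup. Put $J_{c,d}:=\int_c^d\varphi(Y_w)\,dw$ and $\Xi_{c,d}:=\Expec{J_{c,d}\,|\,\mathscr F_c}$ on the simplex over $[a,b]$. By $B\indep Y$, the Markov property of $Y$, and freezing the $\mathscr F_c$-measurable parameters of $\varphi$, $\Expec{\varphi(Y_w)\,|\,\mathscr F_c}=\int_\Y\varphi(\zeta)\big(\L(Y_w\,|\,Y_c)-\pi\big)(d\zeta)$ a.s., so $\|\Expec{\varphi(Y_w)\,|\,\mathscr F_c}\|\lesssim M\,\TV{\L(Y_w\,|\,Y_c)-\pi}$; since total variation distances lie in $[0,1]$, since $M^p$ is $\mathscr F_\sigma$-measurable, and using \eqref{eq:tv_decay_assumption} in the form $\Expec{\TV{\L(Y_w\,|\,Y_c)-\pi}\,\big|\,\F_\sigma^Y}\lesssim(w-c)^{-\rho}$ (legitimate for $\sigma\le c\le w$ by the Markov property), one gets $\|\Expec{\varphi(Y_w)\,|\,\mathscr F_c}\|_{L^p}\lesssim\|M\|_{L^p}(w-c)^{-\rho/p}$ and hence $\|\Xi_{c,d}\|_{L^p}\lesssim\|M\|_{L^p}|d-c|^{1-\rho/p}$ (using $\rho/p<1$). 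The decisive structural fact is $\Expec{\hat\delta\Xi_{c,m,d}\,|\,\mathscr F_c}=0$ for $c\le m\le d$: this is immediate from $\hat\delta\Xi_{c,m,d}=\Xi_{c,d}-\Xi_{m,d}-\Xi_{c,m}$, the exact additivity $J_{c,d}=J_{c,m}+J_{m,d}$, and the tower property. Consequently, along the dyadic partitions $\bD_n$ of $[a,b]$, $\Xi^{\bD_{n+1}}_{a,b}-\Xi^{\bD_n}_{a,b}=-\sum_{[u,w]\in\bD_n}\hat\delta\Xi_{u,\frac{u+w}2,w}$ is a sum of martingale differences for the skeleton filtration, so the square-function bound \eqref{eq:bdg_consequence_2} --- whose conditional-mean term now vanishes --- gives $\big\|\Xi^{\bD_{n+1}}_{a,b}-\Xi^{\bD_n}_{a,b}\big\|_{L^p}\lesssim\|M\|_{L^p}(b-a)^{1-\rho/p}\,2^{-n(\frac12-\rho/p)}$, which is summable because $p>2\rho$; the same inequality shows $\sum_{[u,v]\in\bD_n}\Xi_{u,v}\to\sum_{[u,v]\in\bD_n}J_{u,v}=J_{a,b}$ in $L^p$. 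Summing the telescope and adding $\|\Xi_{a,b}\|_{L^p}$ proves the estimate. (It does not follow verbatim from \cref{prop:stochastic_mild_sewing}(ii), since \eqref{Lambda-bound} would force $\bar\eta=1-\rho/p<1$; the martingale property of $\hat\delta\Xi$ is precisely the substitute for that hypothesis.)

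The main obstacle is the difference estimate, where the amplitude $M=2L\|X-Z\|$ is random and generally correlated with the fast motion, because $X,Z$ are built from $\F_{s/\varepsilon}^Y$. The computation above bypasses this by conditioning down to $\mathscr F_\sigma$ \emph{before} the mixing rate is invoked: as $M^p$ is $\mathscr F_\sigma$-measurable and $\sigma\le c$,
\[
  \Expec{M^p\,\TV{\L(Y_w\,|\,Y_c)-\pi}}=\Expec{M^p\,\Expec{\TV{\L(Y_w\,|\,Y_c)-\pi}\,\big|\,\F_\sigma^Y}}\lesssim(w-c)^{-\rho}\,\Expec{M^p},
\]
which is why \eqref{eq:tv_decay_assumption} is used in its conditional form --- a harmless strengthening, automatic from the Markov property. (If only the unconditional form of \eqref{eq:tv_decay_assumption} is retained, Hölder's inequality still produces the ergodic condition with $\|X-Z\|_{L^p}$ replaced by $\|X-Z\|_{L^{p'}}$ for any $p'>p$ and a rate slightly below $\tfrac\rho p$, which already suffices for \cref{cor:lp_stability,thm:averaging_spde}.) Finally, the requirements $\rho/p<1$, $p>2\rho$, and the GRR condition $1-\tfrac{1+\rho}p>1-\delta$ hold simultaneously once $p>\tfrac{1+\rho}\delta\vee2$ and $\rho,\delta\in(0,1)$.
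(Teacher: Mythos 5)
Your proof is correct, but it takes a genuinely different route from the paper's. The paper conditions exactly once, on $Y_{s/\varepsilon}$: with the spatial parameters frozen it interpolates $\Expec{\|\fG^\varepsilon_{s,t}\|^p}\leq\Expec{\Expec{\|\fG^\varepsilon_{s,t}\|^2\,|\,Y_{s/\varepsilon}}\,\|\fG^\varepsilon_{s,t}\|_{L^\infty(\Omega)}^{p-2}}$, expands the conditional second moment as a double time integral whose integrand is controlled by the total-variation decay between the two intermediate times, and reads off the same bound $\varepsilon^{\rho/p}|t-s|^{1-\rho/p}\|X-Z\|_{L^p}$; the H\"older upgrade is then done via Kolmogorov's continuity theorem rather than Garsia--Rodemich--Rumsey. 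You instead run a stochastic-sewing cascade with compensator $\Xi_{c,d}=\Expec{J_{c,d}\,|\,\mathscr{F}_c}$ (controlled by the mixing rate) and dyadic fluctuations controlled by Burkholder; your observation that the vanishing of $\Expec{\hat\delta\Xi_{c,m,d}\,|\,\mathscr{F}_c}$ substitutes for the hypothesis $\bar\eta>1$ of \cref{prop:stochastic_mild_sewing}, which fails here since $1-\rho/p<1$, is correct and is the decisive point of the adaptation. Both arguments arrive at the identical intermediate estimate under the identical constraint $p>(1+\rho)/\delta$; yours is longer but isolates a single reusable increment bound and avoids the $L^2$--$L^\infty$ interpolation trick, while the paper's is shorter and needs no martingale inequality at this stage. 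Two remarks. For $g$ you should, as the paper does, fix a unit vector $e\in\cK$ and run the argument for the $\cH$-valued process $g(\bigcdot,\bigcdot)e$ before invoking \eqref{eq:bdg_consequence_2}: $L(\cK,\cH)$ is not a Hilbert space, so Burkholder's square-function inequality is not available there verbatim. And your point that the argument really uses a conditional form of \eqref{eq:tv_decay_assumption} is well taken --- the paper's own display bounding $\big\|\Expec{g(x,Y_{v/\varepsilon})e-g(z,Y_{v/\varepsilon})e\,|\,Y_{u/\varepsilon}}\big\|$ by $\|x-z\|(\varepsilon/(v-u))^\rho$ silently uses the same strengthening --- and your H\"older fallback is a reasonable way to retain the hypothesis exactly as stated.
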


\begin{proof} 
  We only prove the statement for $g$; for $f$ one proceeds \emph{mutatis mutandis}.  Since $\bar{g}$ is bounded and Lipschitz continuous, we can subtract it from $g$ so that we may assume $\bar{g}\equiv 0$ without any loss of generality. Let $e\in\cK$ be a unit vector. 

  Let $s\in[0,T]$ and fix random variables $X,Z\in\F_s^B\vee\F^Y_{\frac{s}{\varepsilon}}$. For any $x,z\in\cH$, we abbreviate
    \begin{equation*}
      \fG_{s,t}^\varepsilon(x,z)=\int_s^t \Big(G_\varepsilon(u,x)e-G_\varepsilon(u,z)e\Big)\,du,
    \end{equation*}
 which is a random variable with values in $\cH$, and  observe the trivial bound 
    \begin{equation*}
      \big\|\fG_{s,t}^\varepsilon(x,z)\big\|_{L^\infty(\Omega)}^{p-2}=\left\|\int_s^t \Big(G_\varepsilon(u,x)e-G_\varepsilon(u,z)e\Big)\,du\right\|_{L^\infty(\Omega)}^{p-2}\lesssim \|x-z\|^{p-2} |t-s|^{p-2}.
    \end{equation*} 
By Markovity,  $Y_{\frac{u}{\varepsilon}}$ is conditionally independent of $\F^B_s\vee\F^Y_{\frac{s}{\varepsilon}}$ given $Y_{\frac{s}{\varepsilon}}$, so we have
    \begin{align}
    &  \Expec{\left\|\int_s^t \Big(G_\varepsilon(u,X)e-G_\varepsilon(u,Z)e\Big)\,du\right\|_{\cH}^p}=\Expec{\Expec{\big\|\fG_{s,t}^\varepsilon(x,z)\big\|_{\cH}^p\,\middle|\,Y_{\frac{s}{\varepsilon}}}\Big|_{x=X,z=Z}} \nonumber\\
      &\leq\Expec{\Expec{\big\|\fG_{s,t}^\varepsilon(x,z)\big\|_\cH^2\,\middle|\,Y_{\frac{s}{\varepsilon}}}\big\|\fG_{s,t}^\varepsilon(x,z)\big\|_{L^\infty(\Omega)}^{p-2}\Big|_{x=X,z=Z}}. \label{eq:split_independence}
    \end{align}
    The first factor can be expanded as
    \begin{align*}
        &\Expec{\big\|\fG_{s,t}^\varepsilon(x,z)\big\|^2\,\middle|\,Y_{\frac{s}{\varepsilon}}}=\Expec{\bigg\|\int_s^t \Big(G_\varepsilon(u,x)e-G_\varepsilon(u,z)e\Big)\,du\bigg\|^2\,\bigg|\,Y_{\frac{s}{\varepsilon}}}\\
        =&2\int_s^t\int_s^v\Expec{\Braket{g\big(x,Y_{\frac{u}{\varepsilon}} \big)e-g\big(z,Y_{\frac{u}{\varepsilon}} \big)e,\Expec{g\big(x,Y_{\frac{v}{\varepsilon}} \big)e-g\big(z,Y_{\frac{v}{\varepsilon}} \big)e\,\middle|\,Y_{\frac{u}{\varepsilon}} }}\,\middle|\,Y_{\frac{s}{\varepsilon}}}\bigg|_{x=X,z=Z}\,du\,dv.
    \end{align*}
    We notice that, by \eqref{eq:tv_decay_assumption}--\eqref{eq:assumptions_f} and the assumption $\bar{g}\equiv 0$, 
    \begin{align*}
        \Big\|\Expec{g\big(x,Y_{\frac{v}{\varepsilon}} \big)e-g\big(z,Y_{\frac{v}{\varepsilon}} \big)e\,\middle|\,Y_{\frac{u}{\varepsilon}} }\Big\|
        &\leq \big\|g(x,\bigcdot)e-g(z,\bigcdot)e\big\|_\infty\TV{\L(Y_{\frac{v}{\varepsilon}} \,|\,Y_{\frac{u}{\varepsilon}} )-\pi}\\
        &\lesssim  \|x-z\| \left(\frac{\varepsilon}{v-u}\right)^{\rho}.
    \end{align*}
    Hence by Cauchy-Schwarz,
    \begin{equation*}
        \Expec{\big\|\fG_{s,t}^\varepsilon(x,z)\big\|^2\,\middle|\,Y_{\frac{s}{\varepsilon}}}
        \lesssim \|x-z\|^2\int_s^t\int_s^v\left(\frac{\varepsilon}{v-u}\right)^\rho\,du\,dv 
        \lesssim \varepsilon^\rho\|x-z\|^2|t-s|^{2-\rho}. 
    \end{equation*}
       in view of \eqref{eq:assumptions_f}. Inserting this back into \eqref{eq:split_independence}, we have shown that
    \begin{equation*}
      \left\|\int_s^t G_\varepsilon(u,X)e-G_\varepsilon(u,Z)e\,du\right\|_{L^p}\lesssim\varepsilon^{\frac\rho p}|t-s|^{1-\frac{\rho}{p}}\|X-Z\|_{L^p}.
    \end{equation*}
    Since this bound is uniform in both $0\leq s<t\leq T$ and $e\in\{x\in\cK:\,\|x\|_\cK=1\}$, Kolmogorov's continuity theorem implies
    \begin{equation*}
        \Big\|\big|G_\varepsilon(\bigcdot,X)-G_\varepsilon(\bigcdot,Z)-\big(\bar g(X)-\bar g(Z)\big)\big|_{\C^{-\delta}(L(\cK,\cH))}\Big\|_{L^p}\lesssim \varepsilon^{\frac\rho p}\|X-Z\|_{L^p},
    \end{equation*}
    provided that we choose $p>\frac{1+\rho}{\delta}$.

    We also have 
    \begin{equation*}
        \Big\|\Expec{g\big(x,Y_{\frac{v}{\varepsilon}} \big)e\,\middle|\,Y_{\frac{u}{\varepsilon}} }\Big\|\leq \big\|g(x,\bigcdot)e\big\|\TV{\L(Y_{\frac{v}{\varepsilon}} \,|\,Y_{\frac{u}{\varepsilon}} )-\pi}
        \lesssim  \; \|g\|_\infty \left(\frac{\varepsilon}{v-u}\right)^{\rho}.
    \end{equation*}
Hence replacing $G_\varepsilon(u,x)e-G_\varepsilon(u,z)e$ by $G_\varepsilon(u,x)e$ in the definition of $\fG_{s,t}^\varepsilon(x,z)$ 
leads to 
\begin{equation*}
 \Big\|\big|G_\varepsilon(\bigcdot,X)-\bar g(X)\big|_{\C^{-\delta}(L(\cK,\cH))}\Big\|_{L^p}\lesssim \varepsilon^{\f \rho p}.
\end{equation*}
 This completes the proof.
\end{proof}

%
%
%
%

\subsection{Examples}

A class of fast process falling in the regime of \cref{lem:norm_convergence} are solutions of S(P)DEs  driven by another \emph{fractional} Brownian motion. Establishing the convergence of the conditional law is much more delicate in this case and there not many results known.  For a (time-inhomogeneous) Markov process with transition kernel $(\sP_{s,t})_{0\leq s\leq t}$, the assumption \eqref{eq:tv_decay_assumption} is implied by
 $ \TV{\sP_{s,t}(y,\cdot)-\pi}\leq \frac{C(y)}{(t-s)^\rho}$ for all $0\leq s\leq t$ and $y\in\Y$,
provided that $\sup_{s\geq 0}\Expec{C(Y_s)}<\infty$. Such a bound is well known to hold for solutions of SDEs satisfying strong H\"ormander's conditions on compact manifolds.  There are of course many examples of random dynamical systems with rate of convergence to invariant measure at least polynomial and for which the above bound holds, we shall focus on the less exploited non-Markovian dynamics.

\subsubsection{A Non-Markovian Example}\label{sec:non_markov}






The following \emph{quenched ergodic theorem} furnishes non-Markovian examples falling in the regime of the averaging principle in the next section:
 Consider the SDE
  \begin{equation}\label{eq:sde}
    dY_t=b(Y_t)\,dt+\sigma\,d\hat{B}_t,
  \end{equation}
  where $\sigma\in\Lin{n}$ is invertible and $\hat{B}$ is an $n$-dimensional fBm independent of $B$. 

  To make sense of an invariant measure for the equation \eqref{eq:sde}, we course cannot resort to the notion of an invariant measure for a Markov process. A natural replacement can be found in the theory of random dynamical systems. However, the latter invariant measures are not guaranteed to be physically meaningful since the solution may `look' into the future. Hence, Hairer \cite{Hairer2005} introduced the notion of a `physical invariant measure' for which a Markov process is defined by including the history of the process---instead of the whole two-sided noise process as used in the theory of random dynamical system. The history space $\H_H$ is a separable Banach space defined as the closure of the space 
\begin{equation*}
  \C_0^\infty\big((-\infty,0],\R^n\big)\define\{w: (-\infty, 0]\to \R^n \text{ smooth with compact support and } w(0)=0\},
\end{equation*}
under the norm $
  \|w\|_H\define\sup_{s,t\le 0} \f {|w(t)-w(s)|}{|t-s|^{\f 12(1-H)}(1+|t|+|s|)^{\f 12}}$. This space supports the Wiener measure.
 The construction of the invariant measure then comes down to constructing a family of Feller transition probabilities $(\mathscr{Q}_t)_{t\in[0,T]}$ on $\R^n\times \H_H$. Then let $(Y_t^T, Z_t^T)_{t\in[0,T]}$ be a Markov process on $\R^n\times\H_H$ with transition probabilities $(Q_t)$ and started from an invariant measure whose projection to $\H_H$ is the Wiener measure. One can then take $T\to \infty$ to obtain a weak limit $(Y_t, Z_t)_{t\geq 0}$. The first marginal of this Feller process solves \eqref{eq:sde} and is stationary. Its time $0$ marginal $\pi\in\P(\R^n)$ is used in the next proposition:

\begin{proposition}\label{prop:sde}
  Suppose that $b:\R^n\to\R^n$ is globally Lipschitz continuous and there are $\lambda,\kappa,R>0$ such that 
  \begin{equation*}
     \braket{b(x)-b(y),x-y}\leq\begin{cases}
      -\kappa|x-y|^2, &|x|,|y|> R,\\
      \lambda|x-y|^2, &\text{otherwise}.
     \end{cases}
   \end{equation*} 
   Then there is a $\Lambda=\Lambda(\kappa, R)>0$ such that, provided $\lambda\leq\Lambda$, there exists a physical invariant probability measure $\pi\in\P(\R^n)$ to \eqref{eq:sde} and, for any initial condition $Y_0\in L^1$, the conclusion of \cref{lem:norm_convergence} holds:

  Let $f:\cH\times\R^n\to\cH$ and  $g:\cH\times\R^n\to L(\cK,\cH)$ be  bounded measurable functions for which  there is an $L>0$ such that for all $x,z\in\cH$ and all $y\in\R^n$:
    \begin{equation*}
        \big\|f(x,y)-f(z,y)\big\|_{\cH}\leq L\|x-z\|_\cH, \qquad
         \big\|g(x,y)-g(z,y)\big\|_{L(\cK,\cH)}\leq L\|x-z\|_\cH.
    \end{equation*}
    Fix  $\delta\in(0,1)$ and $\gamma\in(0,1)$. Then, for every $p>\frac{1+\rho}{\kappa}\vee 2$, there is a $c>0$ such that the ergodic condition with exponential rate $e^{-\frac{c}{p}t}$ and exponent $\delta$ holds for both $f$ and $g$.
\end{proposition}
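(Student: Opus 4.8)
The strategy is to invoke \cref{lem:norm_convergence} once we have verified its hypothesis, which in the present non-Markovian setting must be done at the level of the augmented Markov process $(Y_t,Z_t)_{t\geq 0}$ on $\R^n\times\H_H$ rather than for $Y$ alone. The key observation is that, although $Y$ itself is not Markov, the pair $(Y,Z)$ \emph{is} a (time-homogeneous) Feller Markov process by Hairer's construction, and the quantities appearing in the ergodic condition only involve $Y_{\cdot/\varepsilon}$, which is a measurable function of $(Y_{\cdot/\varepsilon},Z_{\cdot/\varepsilon})$. Thus the proof of \cref{lem:norm_convergence} goes through verbatim provided we establish, in place of \eqref{eq:tv_decay_assumption}, an exponential total-variation bound
\begin{equation*}
  \Expec{\TV{\L\big((Y_t,Z_t)\,\big|\,(Y_s,Z_s)\big)-\pi_{\mathrm{ext}}}}\lesssim e^{-c(t-s)},
\end{equation*}
where $\pi_{\mathrm{ext}}$ is the physical invariant measure on $\R^n\times\H_H$ and $c>0$; projecting onto the $\R^n$-marginal and noting that $\bar f,\bar g$ only see $\pi=[\pi_{\mathrm{ext}}]_{\R^n}$, the conditional-independence argument and the Gaussian/Cauchy--Schwarz estimates in the proof of \cref{lem:norm_convergence} then yield the ergodic condition with rate $e^{-\frac cp t}$ and exponent $\delta$ for every $p>\frac{1+\rho}{\delta}\vee 2$ (with $\rho$ now effectively arbitrary since we have exponential decay).

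The main work is therefore the construction of $\pi_{\mathrm{ext}}$ and the exponential contraction. First I would recall Hairer's framework \cite{Hairer2005,Hairer2007}: under the stated one-sided Lipschitz/dissipativity-at-infinity condition with $\lambda\leq\Lambda(\kappa,R)$ small enough, the drift $b$ satisfies a \emph{large-scale contraction} for the associated stochastic flow, which for fractional noise with $H>\frac12$ can be turned into a \emph{generalized coupling} (or asymptotic coupling) argument. Concretely, one couples two copies of \eqref{eq:sde} driven by the \emph{same} future increments of $\hat B$ but with a Girsanov shift nudging the second copy toward the first; the dissipativity outside $\bar B_R$ forces exponential contraction of $|Y_t^1-Y_t^2|$ on the (overwhelmingly likely) event that both stay outside the ball for long stretches, and the Lyapunov structure controls the excursions into $\bar B_R$. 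This produces a unique stationary solution of \eqref{eq:sde} in the sense of the augmented Markov process on $\R^n\times\H_H$, whose transition kernels $(\mathscr Q_t)$ converge to $\pi_{\mathrm{ext}}$ in total variation at an exponential rate, uniformly over initial conditions in bounded sets (and for $Y_0\in L^1$ after integrating the Lyapunov bound). The $t\to\infty$ weak-limit construction sketched before the proposition then delivers $\pi$.

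The hard part will be making the coupling quantitative enough to produce a genuine total-variation bound rather than mere weak convergence: fractional noise lacks the Markov property at the noise level, so one cannot couple to equality in finite time, and one instead runs the generalized coupling on the history space and exploits that the memory of the past decays in the $\|\cdot\|_H$-norm. I would lean on the fact that, once two trajectories are $\varepsilon$-close and remain close, the corresponding histories become close in $\H_H$ and a final one-step Doeblin/density argument upgrades asymptotic closeness to a total-variation estimate with exponential rate; this is precisely the content of \cite[Theorem~1.3 and §5]{Hairer2005} together with the large-scale-contraction criterion of \cite{Hairer2007}. Once the exponential mixing bound for $(Y,Z)$ is in hand, the remainder of the proof is a line-by-line repetition of \cref{lem:norm_convergence}: replace $Y_{u/\varepsilon}$ by $(Y_{u/\varepsilon},Z_{u/\varepsilon})$, use the Markov property of the pair for the conditional independence in \eqref{eq:split_independence}, bound $\TV{\L((Y_{v/\varepsilon},Z_{v/\varepsilon})\mid(Y_{u/\varepsilon},Z_{u/\varepsilon}))-\pi_{\mathrm{ext}}}\lesssim e^{-c(v-u)/\varepsilon}$, integrate, and conclude via Kolmogorov's continuity theorem exactly as there, with $\varepsilon^{\rho/p}$ replaced by $\big(\int e^{-c(v-u)/\varepsilon}\big)^{1/2}\lesssim e^{-\frac cp s}$ after the same bookkeeping.
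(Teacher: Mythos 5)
Your overall architecture---reduce the proposition to a conditional total-variation mixing estimate and then rerun the proof of \cref{lem:norm_convergence} line by line---matches the shape of the paper's argument, but the mixing estimate you propose to prove is the wrong one, and the results you cite do not supply it. The paper does \emph{not} lift to the augmented Markov process $(Y_t,Z_t)$ on $\R^n\times\H_H$ and prove exponential total-variation convergence of its kernels to $\pi_{\mathrm{ext}}$: that statement is false. The history component $Z_t$ carries the initial noise segment forward deterministically, so for $w\neq w'$ the laws $\mathscr{Q}_t\big((y,w),\cdot\big)$ and $\mathscr{Q}_t\big((y',w'),\cdot\big)$ are mutually singular on $\R^n\times\H_H$ for every finite $t$, and $\Expec{\TV{\L\big((Y_t,Z_t)\,\big|\,(Y_s,Z_s)\big)-\pi_{\mathrm{ext}}}}=1$ identically. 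Moreover, Hairer's coupling construction (your cited Theorem 1.3 and \S 5 of \cite{Hairer2005}) yields total-variation convergence only of the \emph{unconditional} marginal $\L(Y_t)$ to $\pi$, and only at an \emph{algebraic} rate depending on $H$; neither the exponential rate nor the conditional version needed for the disintegration step \eqref{eq:split_independence} follows from it or from the large-scale-contraction criterion alone. What is actually required is the \emph{quenched} estimate $\Expec{\TV{\L\big(Y_t\,\big|\,(Y_r)_{r\leq s}\big)-\pi}}\lesssim e^{-c(t-s)}$, i.e.\ exponential decay of the conditional law given the entire past. This is genuinely harder than the annealed statement, because the conditioned process solves a different, non-autonomous dynamics than \eqref{eq:sde}; it is the content of the quenched ergodic theorem of \cite[Corollary 3.24]{Li2020}, which the paper imports as a black box. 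Your proposal leaves precisely this step unproved.

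For comparison, the paper's proof avoids the Markov lift entirely: it uses the locally independent decomposition $\hat B=\bar{\hat B}^{s/\varepsilon}+\tilde{\hat B}^{s/\varepsilon}$ from \eqref{eq:increment_decomposition}, notes that $\tilde{\hat B}^{s/\varepsilon}$ is independent of $\F_s^B\vee\F^Y_{s/\varepsilon}$ so that one may condition on the full noise past rather than on a Markov state (replacing the Markov conditioning of \cref{lem:norm_convergence} by conditioning on that $\sigma$-algebra, cf.\ \cite[Lemma 3.6]{Li2020}), and then invokes the quenched ergodic theorem to supply the analogue of \eqref{eq:tv_decay_assumption} with exponential rate. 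The remainder of \cref{lem:norm_convergence} then goes through unchanged. If you wish to make your route self-contained, you would need to prove the quenched theorem itself, not an augmented-space TV bound.
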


\begin{proof}
  The only difference to \cref{lem:norm_convergence} lies in the conditioning step \eqref{eq:split_independence}: Let $\Phi_{s,t}\big(y,\hat B)$ denote the flow to \eqref{eq:sde} interpreted in the pathwise sense. We can decompose $\hat B=\bar{\hat{B}}^{\frac{s}{\varepsilon}}+\tilde{\hat{B}}^{\frac{s}{\varepsilon}}$ as in \eqref{eq:increment_decomposition}. Since $\tilde{\hat{B}}^{\frac{s}{\varepsilon}}$ is independent of $\F_s^B\vee\F^Y_{\frac{s}{\varepsilon}}$, \eqref{eq:split_independence} becomes
  \begin{equation*}
      \Expec{\left\|\int_s^t \Big(G_\varepsilon(u,X)e-G_\varepsilon(u,Z)e\Big)\,du\right\|_{\cH}^p}\leq\Expec{\Expec{\big\|\fG_{s,t}^\varepsilon(x,z)\big\|_\cH^2\,\middle|\,\F^B_{\frac{s}{\varepsilon}}}\big\|\fG_{s,t}^\varepsilon(x,z)\big\|_{L^\infty(\Omega)}^{p-2}\Big|_{x=X,z=Z}},
    \end{equation*}
    see \cite[Lemma 3.6]{Li2020} for details. The \emph{quenched ergodic theorem} of \cite[Corollary 3.24]{Li2020} shows that
    \begin{equation*}
      \Expec{\TV{\L(Y_t\,|\,(Y_r)_{r\leq s}}}\lesssim e^{-c(t-s)},
    \end{equation*}
    whence we can conclude the proof along the same lines as in \cref{lem:norm_convergence}.
\end{proof}

\subsubsection{The Stationary Process Case}

Let us draw a connection of our results to \cite{Hairer2020}. There, the authors considered a \emph{finite-dimensional} slow SDE and proved a similar averaging principle for time $t$ mixing, stationary fast processes. This is to say that Rosenblatt's mixing coefficient
\begin{equation*}
  \alpha(t)=\alpha(Y_0,Y_t)=\sup_{\substack{A\in\sigma(Y_0)\\B\in\sigma(Y_t)}}\big|\prob(A\cap B)-\prob(A)\prob(B)\big|
\end{equation*}
vanishes with an algebraic rate as $t\to\infty$. Let $\pi\in\P(\Y)$ be the stationary measure of $Y$.  If  $  \TV{\L(Y_t\,|\,Y_0=y)-\pi}\leq C(y)\beta(t)$ for $\pi$-a.e. $y\in\Y$, then we have that $ \alpha(t)\leq\|C\|_{L^1(\Y,\pi)}\beta(t)$.
Indeed, fix $A,B\in\B(\Y)$ and $t\geq 0$. On disintegration we have
\begin{align*}
  &\phantom{=}|\prob(Y_0\in A,Y_t\in B)-\prob(Y_0\in A)\prob(Y_t\in B)|\\
  &= \left| \int_{A}\Big(\L(Y_t\,|\,Y_0=y)(B)-\pi(B)\Big)\,\pi(dy)\right|\leq \beta(t)\int_{\Y} C(y)\,\pi(dy)=\|C\|_{L^1(\Y,\pi)}\beta(t).
\end{align*} 
We emphasize that we cannot cover such a general class of fast process. This is due to the failure of Kolmogorov's continuity theorem for random fields indexed by a general infinite-dimensional Hilbert space, which made the more challenging conditioning procedure necessary. In particular, the identity \eqref{eq:split_independence} may not hold for general $Y$. However, the classes of fast processes presented in \cref{sec:non_markov} cover a very rich selection of practically relevant examples.

\section{Application to Slow-Fast Systems}\label{sec:slow_fast}

Recall that $(Y_t)_{t\geq 0}$ is a sample continuous process on a separable complete metric space  $(\Y, d_{Y})$, which is independent of the fBm $B$. We assume that the ergodic condition introduced in \cref{def-ergodic} holds. Examples of $Y$ satisfying this assumption are given in the last section.

We can now prove a fractional averaging principle for the two-scale dynamics \eqref{eq:slow_spde}. To this end, let us first state the regularity assumption on the coefficients:
\begin{condition}\label{cond:coefficients}
  The vector fields $f:\cH\times\Y\to\cH$, $g:\cH\times\Y\to L(\cK,\cH)$ are globally Lipschitz continuous. Moreover, for each $\alpha<1-H$,
  \begin{alignat*}{4}
    \sup_{y\in\Y}  \big\|g(x,y)-g(z,y)\big\|_{L(\cK,\cH_{-\alpha})} &\lesssim \|x-z\|_{\cH_{-\alpha}} &\qquad \forall\,&x,z\in\cH,\\
     \sup_{y\in\Y}  \|D_x g(x,y)-D_xg(z,y)\|_{L(\cK,L(\cH_{-\alpha}))}&\lesssim \|x-z\|_{\cH_{-\alpha}}  &\qquad \forall\,&x,z\in\cH,\\
    \sup_{x\in \cH}  \|D_x g(x,y)-D_xg(x,\bar{y})\|_{L(\cK,L(\cH_{-\alpha}))}&\lesssim d_\Y(y,\bar{y})  &\qquad \forall\,&y,\bar{y}\in\Y,\\
    \sup_{x\in \cH}   \big\|g(x,y)-g(x,\bar{y})\big\|_{L(\cK,\cH_{-\alpha})}&\lesssim d_\Y(y,\bar{y}) &\qquad \forall\,&y,\bar{y}\in\Y.
    \end{alignat*}
\end{condition}
\begin{remark}
  Let $f,g$ satisfy \cref{cond:coefficients} and define 
  $$ f_\varepsilon(t,x)=f(x, Y_{\f t \varepsilon}), \qquad \qquad 
  g_\varepsilon(t,x)=g(x,  Y_{\f t \varepsilon}).$$
  Then $f_\varepsilon, g_\varepsilon$ satisfies the conditions of Proposition \ref{prop:well_posedness} and therefore for each $\varepsilon>0$, the SPDE \Cref{eq:slow_spde} has a unique solution in $\hat\B_{\alpha,p}([0,T],\cH)$ for any $T>0$, any $p\ge 2$, any $\alpha<H$, and any $X_0\in \cH$, see \cref{non-uniform-estimate}.
\end{remark}
\begin{theorem}\label{thm:averaging_spde}
Let $Y$ be a stochastic process with almost surely locally H\"older continuous sample paths of order greater than  $1-H$. Suppose that either $Y$ falls in the regime of \cref{lem:norm_convergence} or \cref{prop:sde}. If $f$ and $g$ satisfy \cref{cond:coefficients} and $X_0\in\bigcap_{p\geq 1}L^p(\Omega,\cH)$, then the following hold for any $\alpha<H$:
  \begin{enumerate}
    \item As $\varepsilon\to 0$, the unique solution $X^\varepsilon\in \hat\C^\alpha\big([0,T],\cH\big)$ to \eqref{eq:slow_spde} converges in probability in $\hat\C^\alpha\big([0,T],\cH\big)$.
    \item The limit is the unique pathwise solution to \eqref{eq:averaged_spde}.
  \end{enumerate}
\end{theorem}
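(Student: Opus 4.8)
The plan is to realise \eqref{eq:slow_spde} as an instance of the stochastic stability result \cref{cor:lp_stability}, applied along an arbitrary sequence $\varepsilon_n\downarrow 0$ with $f^n=f(\bigcdot,Y_{\bigcdot/\varepsilon_n})$, $g^n=g(\bigcdot,Y_{\bigcdot/\varepsilon_n})$ and limiting coefficients $\bar f,\bar g$. The assumption that $Y$ falls in the regime of \cref{lem:norm_convergence} or \cref{prop:sde} is exactly what will make the $\A$-norm distances required by that corollary tend to zero.

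First I would fix the target exponent $\alpha_0<H$ and reduce to proving convergence in $\hat\C^{\alpha}$ for a single, conveniently chosen $\alpha$: on the bounded interval $[0,T]$ one has the continuous embedding $\hat\C^{\alpha}\hookrightarrow\hat\C^{\alpha_0}$ whenever $\alpha>\alpha_0$, so it suffices to take some $\alpha\in(1-H,H)$ with $\alpha>\alpha_0$ (if $\alpha_0\le 1-H$, take $\alpha$ slightly above $1-H$ and let the spare Kolmogorov regularity below absorb the gap). Since $H>\tfrac12$, one can then choose $\delta>0$ small, $\gamma<1$ close to $1$, $\alpha\in(1-H,H)$, and $p\geq 2$ large so that
\begin{equation*}
  H-\delta>\tfrac12,\qquad \alpha<\tfrac{H-\delta}{2-\gamma},\qquad H-\delta+\alpha\gamma>1,\qquad \alpha-\tfrac1p>\alpha_0,
\end{equation*}
and moreover so that the ergodic conclusion of \cref{lem:norm_convergence}/\cref{prop:sde} holds at both exponents $p$ and $p/\gamma$ (it holds for all large exponents); the assumption $X_0\in\bigcap_{p}L^p(\Omega,\cH)$ is what permits enlarging $p$ freely. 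The first three inequalities are the hypotheses of \cref{cor:lp_stability} (and of \cref{uniform-Lp-bound} behind it), the fourth is what the mild Kolmogorov criterion \cref{prop:mild_kolmogorov} requires.

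Next I would verify the hypotheses of \cref{cor:lp_stability}. Independence of $f^n,g^n$ from $\F_T^B$ is immediate since $Y\perp B$. For each realisation of $Y$ the path $t\mapsto Y_{t/\varepsilon_n}$ is H\"older of some order $>1-H$, so $f^n,g^n$ satisfy the conditions of \cref{prop:well_posedness} (the Remark following \cref{cond:coefficients}); the time-independent fields $\bar f,\bar g$ inherit global Lipschitz continuity and the $\cH_{-\alpha}$-estimates of \cref{cond:coefficients} by integrating the pointwise-in-$y$ bounds against $\pi$ (dominated convergence, using $\|D_xg\|\le L$, gives $D_x\bar g=\int D_xg(\bigcdot,y)\,\pi(dy)$ with the same Lipschitz constant), so they satisfy \cref{prop:well_posedness} as well. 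The crucial input is then the ergodic condition of \cref{def-ergodic}, supplied by \cref{lem:norm_convergence} or \cref{prop:sde}, which yields
\begin{equation*}
  \bvertiii{f^n-\bar f}_{\A^p_{-\delta,\gamma}(\cH)}\lesssim\varepsilon_n^{a}\longrightarrow 0,\qquad
  \bvertiii{g^n-\bar g}_{\A^{p/\gamma}_{-\delta,\gamma}(L(\cK,\cH))}\lesssim\varepsilon_n^{a}\longrightarrow 0
\end{equation*}
(with $\varepsilon_n^a$ replaced by an exponential rate in the setting of \cref{prop:sde}). Then \cref{cor:lp_stability} gives $X^{\varepsilon_n}\to\bar X$ in probability in $\hat\C^{(\alpha-\frac1p)-}([0,T],\cH)$, hence in $\hat\C^{\alpha_0}$, where $\bar X$ is the unique pathwise mild solution of \eqref{eq:averaged_spde} — well-posed by \cref{prop:well_posedness}, since $B\in\C^{H-}$ almost surely by \cref{lem:uniform_convergence}. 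Since the sequence $\varepsilon_n\downarrow 0$ was arbitrary, the net $X^\varepsilon$ itself converges to $\bar X$ in probability as $\varepsilon\to 0$ (a failure would yield a bad sequence contradicting what was just shown), which is (i); the limit has been identified as the solution of \eqref{eq:averaged_spde}, which is (ii).

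I expect the only delicate point to be the bookkeeping in the parameter choice: one must check that for \emph{every} target $\alpha_0<H$ the four displayed inequalities — together with the exponent requirement of the ergodic theorem — can be met simultaneously, which pins $\alpha$ into the window $(1-H,H)$ and is precisely where $H>\tfrac12$ is used. Everything else is assembly of the earlier results, the real substance sitting in the ergodic theorem (providing the $\A$-norm convergence) and in the uniform $L^p$-bound \cref{uniform-Lp-bound} that underpins \cref{cor:lp_stability}.
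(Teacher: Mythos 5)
Your overall architecture --- realise \eqref{eq:slow_spde} as an instance of \cref{cor:lp_stability} with $f^n=f(\bigcdot,Y_{\bigcdot/\varepsilon_n})$, $g^n=g(\bigcdot,Y_{\bigcdot/\varepsilon_n})$, feed in the ergodic condition to send the $\A$-norm distances to zero, and clean up with the mild Kolmogorov criterion --- is indeed the engine of the paper's proof, and your parameter bookkeeping is correct. But there is a genuine gap: you apply \cref{lem:norm_convergence} (resp.\ \cref{prop:sde}) directly to $f$ and $g$, whereas both of those results require $f$ and $g$ to be \emph{bounded}. \cref{cond:coefficients} only gives global Lipschitz continuity, so the coefficients are in general unbounded on $\cH$, and boundedness is genuinely used in the ergodic estimates (e.g.\ in the step bounding $\big\|\Expec{g(x,Y_{\frac{v}{\varepsilon}})e\,|\,Y_{\frac{u}{\varepsilon}}}\big\|$ by $\|g(x,\bigcdot)e\|_\infty\,\TV{\L(Y_{\frac{v}{\varepsilon}}\,|\,Y_{\frac{u}{\varepsilon}})-\pi}$). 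The paper closes this gap by localizing: it introduces a smooth cutoff $\varrho_R$, sets $f_R=\varrho_R f$ and $g_R=\varrho_R g$ (now bounded and compactly supported), defines the stopping time $\tau_R^\varepsilon=\inf\{t:\|X_t^\varepsilon\|\vee\|\bar X_t\|>R\}$ and truncated equations for $X^{\varepsilon,R}$ and $\bar X^R$, bounds $\prob(\tau_R^\varepsilon<T)$ uniformly in $\varepsilon$ via the a priori moment bounds on $\bar X$ from \cref{uniform-Lp-bound}, and only then runs your argument (ergodic condition plus \cref{cor:lp_stability}) on the truncated system, removing the cutoff by taking $R$ large.

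A second, related point your proposal glosses over: the ergodic condition of \cref{def-ergodic} controls the difference quotient with denominator $\|X-Z\|_{L^p}$, i.e.\ it delivers the $\A_{-\delta,1}^p$-norm, whereas \cref{cor:lp_stability} (through \cref{uniform-Lp-bound}) needs the $\A_{-\delta,\gamma}^p$-norm with $\gamma<1$ strictly (the exponent $\gamma<1$ is what lets the a priori bound $\|X\|\lesssim(1+\|X\|^\gamma)(\cdots)$ close). The paper's comparison $\bvertiii{f^\varepsilon_{R_0}-\bar f_{R_0}}_{\A_{-\delta,\gamma}^p}\leq\bvertiii{f^\varepsilon_{R_0}-\bar f_{R_0}}_{\A_{-\delta,1}^p}$ again relies on the compact support of the truncated coefficients. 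Without the localization step you have neither the hypotheses of the ergodic lemma nor this norm comparison, so the proof as written does not go through for the class of coefficients admitted by \cref{cond:coefficients}; with the localization inserted, the rest of your assembly (including the reduction to an arbitrary sequence $\varepsilon_n\downarrow 0$ and the identification of the limit) matches the paper's argument.
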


\begin{remark}
  Since we only made use of the covariance structure and hypercontractivity estimates, we notice that the results of \cref{sec:lp_estimates} and consequently \cref{thm:averaging_spde} hold \emph{mutatis mutandis} for suitable non-Gaussian processes like for example an infinite-dimensional version of the Rosenblatt process \cite{Rosenblatt1961,Taqqu1974,Tudor2008}.
\end{remark}

\begin{proof}[Proof of \cref{thm:averaging_spde}]
  Fix $\alpha\in\big(\frac12,H\big)$. Let $R>0$ and let $\varrho_R\in\C^\infty_b(\cH)$ be a smooth bump function with value $1$ on $\bar{B}_R$ and which vanishes on $\cH\setminus \bar B_{2R}$. Since the map $x\mapsto\|x\|^2$ is $\C^\infty$ in the Fr\'echet sense, such a function can be constructed. We also define the stopping time $$\tau_R^\varepsilon=\inf\{t\geq 0:\,\|X_t^\varepsilon\|\vee\|\bar{X}\|>R\}.$$
  Set $f_R(t,x)=\varrho_R(x) f(t,x)$ and similar for $g_R$. Notice that $f_R$ and $g_R$ are bounded by the Lipschitz assumption. Let
  \begin{equation*}
    dX_t^{\varepsilon,R}=\big(AX_t^{\varepsilon,R}+f_R(X_t^{\varepsilon,R},Y_t^\varepsilon)\big)\,dt + g_R(X_t^{\varepsilon,R},Y_t^\varepsilon)\,dB_t
  \end{equation*}
  and
  \begin{equation*}
    d\bar X_t^{R}=\big(A\bar X_t^{R}+\bar{f}_R(\bar X_t^{R}\big)\,dt\big) + \bar{g}_R(\bar X_t^{R})\,dB_t.
  \end{equation*}
  Note that both of these equations are well posed in $\hat{\C}^\alpha([0,T],\cH)$ by \cref{prop:well_posedness}.

  Fix $\delta>0$ and $\gamma<1$ such that $H-\delta+\alpha\gamma>1$ and $\alpha<\frac{H-\delta}{2-\gamma}$. Let $c_1\in(0,1)$ be given.
     Since $\|\hat\delta f_{0,t}\|\ge \|f_t\|-\|S_tf_0\|$, we can estimate
    \begin{align*}
    \prob\left(\big\|X^\varepsilon\big\|_{\C([0,\tau_R^\varepsilon],\cH)}\geq R\right)&\leq \prob\left(\big\|X^\varepsilon\big\|_{\hat\C^\alpha([0,\tau_R^\varepsilon],\cH)}\geq\frac{R-\|S_tX_0\|_\cH}{T^\alpha}\right)\\
    &\leq \prob\left(\big\|X^\varepsilon\big\|_{\hat\C^\alpha([0,\tau_R^\varepsilon],\cH)}\geq\frac{R-M}{T^\alpha}\right) + \prob\big(\|S_tX_0\|_\cH\geq M\big)\\
    &\leq \prob\left(\big\|X^{\varepsilon, R}-\bar X^R\big\|_{\hat\C^\alpha([0,T],\cH)}\geq c_1\right)+\prob\left(\|\bar X\|_{\hat\C^\alpha([0,T],\cH)}\geq \frac{R-M}{T^\alpha}-c_1 \right)\\
    &\phantom{\leq} + \prob\big(\|S_tX_0\|_\cH\geq   M \big).
  \end{align*}
  Hence, we have
  \begin{align*}
    \prob\big(\tau_R^\varepsilon<T\big)&\leq \prob\left(\big\|X^\varepsilon\big\|_{\C([0,\tau_R^\varepsilon],\cH)}\geq R\right)+\prob\left(\big\|\bar{X}\big\|_{\C([0,\tau_R^\varepsilon],\cH)}\geq R\right) \\
    &\leq\prob\left(\big\|X^{\varepsilon, R}-\bar X^R\big\|_{\hat\C^\alpha([0,T],\cH)}\geq c_1\right)+2\prob\left(\|\bar X\|_{\hat\C^\alpha([0,T],\cH)}\geq \frac{R-M}{T^\alpha}-c_1 \right)\\
    &\phantom{\leq} + 2\prob\big(\|S_tX_0\|_\cH\geq   M \big)
  \end{align*}
Let $c_2>0$.  Since $X_0<\infty$ with probability $1$, we can first choose $M_0>0$ such that
  \begin{equation*}
    \prob\big(\|S_t X_0\|_\cH\geq M_0\big)\leq\frac{ c_2}{8},
  \end{equation*}
Since $\bar f, \bar g$ satisfies conditions of \cref{prop:well_posedness},
\cref{uniform-Lp-bound} applies yielding: For each $p\geq 1$,
 $\bar X\in L^p\big(\Omega,\hat\C^\alpha([0,T],\cH)\big)\subset \hat\B_{\alpha,p}$. Hence, pick $R_0>0$ such that
  \begin{equation*}
    \prob\left(\|\bar X\|_{\hat\C^\alpha([0,T],\cH)}\geq \frac{R_0-M_0}{T^\alpha}-c_1 \right)\leq\frac{ c_2}{8}.
  \end{equation*}
 Then we have that
  \begin{equation}\label{eq:convergence_prob}
    \prob\left(\big\|X^\varepsilon-\bar{X}\big\|_{\hat\C^\alpha([0,T],\cH)}> c_1\right)\leq\prob(\tau_{R_0}^\varepsilon< T) + \prob\left(\big\|X^{\varepsilon,R_0}-\bar{X}^{R_0}\big\|_{\hat\C^\alpha([0,T],\cH)}> c_1\right).
  \end{equation}
  Using that $\supp\big(f_{R_0}(t,\bigcdot)\big)\subset\bar{B}_{R_0}$ for each $t\in[0,T]$, we have
  \begin{equation*}
    \bvertiii{f^\varepsilon_{R_0}-\bar{f}_{R_0}}_{\A_{-\delta,\gamma}^p}
    \leq\bvertiii{f^\varepsilon_{R_0}-\bar{f}_{R_0}}_{\A_{-\delta,1}^p}
  \end{equation*}
  and similar for $g_{R_0}$. By \cref{lem:norm_convergence} (resp. \cref{prop:sde}), it holds that
  \begin{equation*}
  \bvertiii{f^\varepsilon_{R_0}-\bar{f}_{R_0}}_{\A_{-\delta,1}^p}\lesssim \varepsilon^{\f \rho p},\qquad \bvertiii{g^\varepsilon_{R_0}-\bar{g}_{R_0}}_{\A_{-\delta,1}^p}\lesssim \varepsilon^{\f \rho p}.
\end{equation*}
Hence by \cref{cor:lp_stability} on the stability of solutions, we can choose $\varepsilon_0>0$ so that, for any $\varepsilon<\varepsilon_0$, 
  \begin{equation*}
    \prob\left(\big\|X^{\varepsilon,R_0}-\bar{X}^{R_0}\big\|_{\hat\C^\alpha([0,T],\cH)}> c_1\right)\leq\frac{ c_2}{4}.
  \end{equation*}
  Coming back to \eqref{eq:convergence_prob}, we have shown that, for any $\varepsilon\in (0,\varepsilon_0)$,
  \begin{align*}
    &\prob\left(\big\|X^\varepsilon-\bar{X}\big\|_{\hat\C^\alpha([0,T],\cH)}> c_1\right)
    \leq 2\prob\left(\big\|X^{\varepsilon,R_0}-\bar{X}^{R_0}\big\|_{\hat\C^\alpha([0,T],\cH)}> c_1\right) +\f { c_2} 2 \le  c_2  \end{align*}
  as required.
\end{proof}

For slow-fast SPDEs driven by Wiener processes where $g(x,y)=g(y)$,  it is known that $X^\varepsilon\to\bar{X}$  in $L^p\big(\Omega,\C([0,T],\cH)\big)$  \cite{Liu2019}. [This is a common feature with SDEs.] To obtain such strong convergence for Markovian systems, the requirement $g(x,y)=g(x)$ is necessary even for SDEs as the following very simple example shows:
\begin{example}
Let $\tilde W$ and $W$ be independent one-dimensional Wiener processes.  Consider  $dX_t^\varepsilon=\cos(Y_{\frac{t}{\varepsilon}})\,d\tilde W_t$ and $dY_t=-Y_t\,dt+\sqrt{2}\,dW_t$ with $Y_0\sim \pi\define N(0,1)$ and $X_0^\varepsilon=X_0$.

Let $\eta\define\sqrt{\int_\R \cos^2(y)\,\pi(dy)}=\sqrt{\frac12\big(1+e^{-2}\big)}$. Then $X^\varepsilon\Rightarrow \bar X=\eta \tilde W$ weakly in $\C\big([0,T],\R\big)$, but
  \begin{equation*}
    \|X^\varepsilon_t-\bar{X}_t\|_{L^2}^2=\int_0^t\Expec{\big(\cos(Y_{\frac{s}{\varepsilon}})-\eta\big)^2}\,ds=\left(1+\frac{1-\sqrt{2(e+e^3)}}{e^2}\right)t \not \to 0.
  \end{equation*}
\end{example}
In our case, the convergence is in probability, even though $g$ depend on both the slow and the fast variable, this is a feature of fractional averaging.

 {
\footnotesize
\bibliographystyle{plain}
\bibliography{./SD-D-21-00139}
}

\end{document}